\crefname{equation}{}{}
\crefname{assumption}{Assumption}{Assumptions}
\newcolumntype{Y}{>{\centering\arraybackslash}X}
\newtheorem{theorem}{Theorem}[section]
\newtheorem{lemma}[theorem]{Lemma}
\newtheorem{proposition}[theorem]{Proposition}
\theoremstyle{definition}
\theoremstyle{remark}
\newtheorem{remark}[theorem]{Remark}
\numberwithin{theorem}{section}
\numberwithin{equation}{section}
\numberwithin{figure}{section}
\def\Th{\mathcal{T}_h}
\def\with{\,:\,}
\def\dx{\,\mathrm{d}x}
\DeclareMathOperator*{\argmin}{arg\,min}
\numberwithin{equation}{section}
\numberwithin{theorem}{section}
	\def\MR#1{}
\begin{document}
\title[A HHO Method for the Gross--Pitaevskii Eigenvalue Problem]{A Hybrid High-Order Method for the Gross--Pitaevskii Eigenvalue Problem}
\author[M.~Hauck, Y.~Liang]{Moritz Hauck$^*$, Yizhou Liang$^\dagger$}
\address{${}^*$ Institute for Applied and Numerical Mathematics, Karlsruhe Institute of Technology, Englerstr.~2, 76131 Karlsruhe, Germany}
\email{moritz.hauck@kit.edu}
\address{${}^{\dagger}$ School for Mathematics, University of Edinburgh, Mayfield Road, James Clerk Maxwell Building, EH9 3FD Edinburgh}
\email{yizhou.liang@ed.ac.uk}	
	
	\begin{abstract}
		We introduce a hybrid high-order method for approximating the ground state of the nonlinear Gross--Pitaevskii eigenvalue problem. Optimal convergence rates are proved for the ground state approximation, as well as for the associated eigenvalue and energy approximations. Unlike classical conforming methods, which inherently provide upper bounds on the ground state energy, the proposed approach gives rise to guaranteed and asymptotically exact lower energy bounds.  Importantly, and in contrast to previous works, they are obtained directly without the need of post-processing, leading to more accurate guaranteed lower energy bounds in practice.
	\end{abstract}
	
	\keywords{Gross--Pitaevskii eigenvalue problem, hybrid high order method, guaranteed lower energy bounds, a priori error analysis}

\subjclass{
	65N12, 
	65N15,
	65N25,
	65N30}
	
	\maketitle
	
	\section{Introduction}
	The Gross--Pitaevskii eigenvalue problem (GP-EVP) arises in quantum physics, where it describes stationary quantum states of bosonic particles at ultracold temperatures, known as Bose--Einstein condensates; see, e.g.,~\cite{Dalfovo1999,AboShaeer2001,Fetter2009}. The problem involves a non-negative trapping potential $V \in L^\infty(\Omega)$ and a parameter $\kappa > 0$ describing the strength of the repulsive particle interactions. As computational domain we consider a bounded convex Lipschitz domain $\Omega \subset \mathbb{R}^d$, ${d \in \{2,3\}}$, where we note that the restriction to a sufficiently large domain, along with homogeneous Dirichlet boundary conditions, is a standard and physically reasonable assumption for modeling low-energy quantum states, cf.~\cite{Bao13}. Mathematically, the \mbox{GP-EVP} seeks $L^2$-normalised eigenfunctions $\{u_j \with j =1,2,\dots \} \subset H^1_0(\Omega)$ and corresponding eigenvalues $\{\lambda_j\with j = 1,2,\dots \}$ such that
	\begin{equation}\label{eq:gpe}
		-\Delta u_j + V u_j + \kappa |u_j|^2 u_j = \lambda_j u_j
	\end{equation}
	holds in the weak sense. The function $|u_j|^2$ represents the density of the stationary quantum state $u_j$, and the eigenvalue $\lambda_j$ is typically called the chemical potential. It is a classical result that all eigenvalues of~\cref{eq:gpe} are real and positive, and the smallest eigenvalue is simple; see, e.g.,~\cite{CCM10}. Without loss of generality, we assume that the eigenvalues are ordered nondecreasingly, i.e., $0 < \lambda_1 < \lambda_2 \leq \lambda_3 \leq \dots$.
	
	The GP-EVP arises as the Euler--Lagrange equations for critical points of the Gross--Pitaevskii energy, defined for all $v \in H^1_0(\Omega)$ as,
	\begin{equation}
		\label{eq:energy}
		\mathcal{E}(v) \coloneqq \tfrac{1}{2} (\nabla v, \nabla v)_{L^2} + \tfrac{1}{2} (Vv, v)_{L^2} + \tfrac{\kappa}{4} (|v|^2 v, v)_{L^2},
	\end{equation}
	subject to the $L^2$-normalisation constraint. The ground state, which represents the stationary quantum state of lowest energy, is characterised as the minimiser of the Gross--Pitaevskii energy, i.e., 
	\begin{equation}
		\label{eq:gs}
		u \in \argmin_{v \in H^1_0(\Omega)  \with \|v\|_{L^2} = 1} \mathcal{E}(v).
	\end{equation}
We emphasise that, under the above assumptions on $\Omega$ and $V$, the constrained minimisation problem~\cref{eq:gs} admits a unique global minimiser (up to sign). Moreover, this ground state $u$ coincides, up to sign, with the eigenfunction $u_1$ associated with the smallest eigenvalue $\lambda_1$ of~\cref{eq:gpe}. The minimal energy $E := \mathcal{E}(u)$ is related to the smallest eigenvalue by the identity $ \lambda_1 = 2E + \tfrac{\kappa}{2} \|u\|_{L^4}^4$. For further theoretical results on the Gross--Pitaevskii problem, see, for example,~\cite{CCM10}.
	
A wide range of discretisation methods has been proposed in the literature to approximate the Gross--Pitaevskii ground state. Most existing approaches are based on $H^1_0$-conforming discretisations, including standard continuous finite elements~\cite{Zho04,CCM10,CHZ11}, spectral and pseudospectral methods~\cite{CCM10,Bao13}, multiscale methods~\cite{HMP14b,MR4379968,henning2023optimal,PWZ23}, and mesh-adaptive methods~\cite{Dan10,He21}. Recently, also non-standard conforming finite element methods using mass lumping techniques to preserve certain positivity properties of the ground state have been introduced; see~\cite{positivitygpe,hauck2024positivity}.
A characteristic property of conforming discretisations is that they approximate the ground state energy from above, as the energy functional is minimised over a subset. In the context of linear eigenvalue problems, where the terms energy and eigenvalue can be used interchangeably, several strategies have been developed to compute lower eigenvalue bounds. Among these, mixed finite element methods have proven effective~\cite{Gal23}, and this approach has recently been extended to the present nonlinear GP-EVP in~\cite{GHL24}. However, this method typically requires an additional postprocessing step to obtain asymptotically exact lower bounds.
An alternative approach that gives guaranteed lower eigenvalue bounds without the need for post-processing is provided by  hybrid high-order (HHO) methods; see~\cite{DiPietro2018} for an overview. In the linear setting, such bounds have been established, for example in~\cite{Carstensen2021, Carstensen2024, Tran2024}; see also~\cite{Carstensen2020} for a similar result based on the related hybridisable discontinuous Galerkin method.

In this work, we extend the HHO methodology to the GP-EVP, aiming to obtain guaranteed lower bounds on the ground state energy without the need for post-processing. To this end, we introduce a modified version of the HHO method that employs a lowest-order quadrature rule  for the nonlinearity. This modification is crucial for estimating the nonlinear term in the proof of the guaranteed lower energy bound via Jensen’s inequality. Note that the lack of higher-order generalisations of Jensen’s inequality prevents us from deriving high-order guaranteed lower energy bounds for the Gross–Pitaevskii problem. Although the guaranteed lower energy bounds we obtain are only of lowest order, they can still be significantly more accurate than those from the post-processed lowest-order mixed discretization in~\cite{GHL24}. The improved accuracy is due to the absence of post-processing, which can  degrade the quality of the bounds, particularly for smooth problems where discretization errors are comparatively small. Numerical experiments clearly demonstrate this improvement, showing that the guaranteed lower energy bounds of the proposed modified HHO method are more accurate by up to two orders of magnitude than the post-processed mixed discretisation from~\cite{GHL24}.

In addition to providing guaranteed lower energy bounds, HHO methods offer improved convergence rates for the reconstructed unknowns compared to classical finite element methods. Specifically, when using \(k\)-th order polynomials on the mesh faces and assuming sufficient smoothness, the reconstructed ground state approximation exhibits optimal convergence rates of order~\(k+1\) in the \(H^1\)-seminorm and \(k+2\) in the \(L^2\)-norm. We provide a rigorous convergence analysis establishing these rates, along with optimal convergence of order \(2k+2\) for the energy and eigenvalue approximations. 
Furthermore, we analyse the convergence of the proposed modified HHO method and prove first-order $H^1$-convergence of the ground state approximation, second-order $L^2$-convergence, and second-order convergence for the energy and eigenvalue approximations.
Notably, the presented error analysis differs substantially from standard techniques in the linear setting. Instead, it is inspired by the analysis conducted in~\cite{CCM10, GHL24} for classical conforming and mixed discretisations of the Gross--Pitaevskii problem, respectively.
	
The remainder of the paper is organised as follows. \Cref{sec:hho} introduces the HHO method for the Gross--Pitaevskii problem. Guaranteed lower energy bounds for a modified HHO method are established in \cref{sec:gleb}. \Cref{sec:convergence} presents an a priori convergence analysis of the HHO method and its modified version. Numerical experiments supporting our theoretical results are presented in \cref{sec:numexp}.

	\section{Hybrid high-order method}\label{sec:hho}
Consider a hierarchy of simplicial meshes $\{\mathcal{T}_h\}_{h > 0}$ of the domain $\Omega$, which we assume to be geometrically conforming and shape regular.  We denote by $T$ the elements of  $\mathcal{T}_h$, and define the maximal mesh size of $\Th$ as $
h \coloneqq \max_{T \in \mathcal{T}_h} h_T$, where $h_T \coloneqq \operatorname{diam}(T)$.
The set of mesh faces of \( \mathcal{T}_h \), denoted by \( \mathcal{F}_h \), is partitioned into the set of interior faces \( \mathcal{F}_h^i \) and boundary faces \( \mathcal{F}_h^b \).
 For any \( T \in \mathcal{T}_h \), we denote by \( \mathcal{F}_{\partial T} \) the set of faces lying on the boundary of \( T \).
The space of polynomials of total degree at most \( l \in \mathbb{N}_0 \) on an element \( T \in \mathcal{T}_h \) is denoted by \( \mathcal P^l(T) \). Similarly, \( \mathcal P^k(F) \) denotes the space of polynomials of total degree at most \( k \in \mathbb{N}_0 \) on a face \( F \in \mathcal{F}_h \). 
Moreover, for any $T \in \mathcal T_h$, the space of broken polynomials on $\partial T$ is defined as
\[
\mathcal P^k(\mathcal{F}_{\partial T}) := \left\{ v \in L^2(\partial T) \with v|_F \in \mathcal P^k(F), \ \forall F \in \mathcal{F}_{\partial T} \right\}.
\]
We further introduce for all \( T \in \mathcal{T}_h \) and all \( F \in \mathcal{F}_h \) the  projections
$
\Pi_T^l \colon L^2(T) \to \mathcal{P}^l(T)$ and $\Pi_{F}^k \colon L^2(F) \to \mathcal{P}^k(F),
$
which are defined as orthogonal projections with respect to the \( L^2 \)-inner products \( (\cdot, \cdot)_T \) and \( (\cdot, \cdot)_{F} \), respectively.

Global discrete polynomial spaces can be obtained by concatenating the local spaces \( \mathcal P^l(T) \) and~\( \mathcal P^k(F) \) in a discontinuous manner, which gives
\begin{align*}
	\mathcal P^l(\mathcal{T}_h)&\coloneqq \{v \in L^2(\Omega) \with v|_T \in \mathcal P^l(T),\; \forall T \in \Th \},\\
	\mathcal P^k(\mathcal{F}_h)&\coloneqq \{v \in L^2(\Sigma) \with v|_F \in \mathcal P^k(F),\;  \forall F \in \mathcal F_h \},
\end{align*}
where $\Sigma \coloneqq \cup_{F \in \mathcal F_h} F$ denotes the mesh skeleton. 
Corresponding \( L^2 \)-projections can be defined piecewise as follows: \( \Pi_{\mathcal{T}_h}^l \colon L^2(\Omega) \to \mathcal{P}^l(\mathcal{T}_h) \) by \( (\Pi_{\mathcal{T}_h}^l \cdot)|_T \coloneqq \Pi_T^l \cdot \) for all \( T \in \mathcal{T}_h \), and \( \Pi_{\mathcal{F}_h}^k \colon L^2(\Sigma) \to \mathcal{P}^k(\mathcal{F}_h) \) by \( (\Pi_{\mathcal{F}_h}^k \cdot)|_F \coloneqq \Pi_F^k \cdot \) for all \( F \in \mathcal{F}_h \).

%\subsection{Local and global approximation spaces}

%We denote by $\Pi_T^l$ the $L^2$-orthogonal projection from $L^2(T)$ onto the polynomial space $\mathcal P^l(T)$, and by $\Pi_{\partial T}^k$ the $L^2$-orthogonal projection from $L^2(\partial T)$ onto the broken face space $\mathcal P^k(\mathcal{F}_{\partial T})$.

%where 

The global approximation space of the HHO method is given by
\begin{equation}
	\label{eq:appsp}
 \hat{V}_h \coloneqq \mathcal P^{k+1}(\mathcal{T}_h) \times \mathcal P^k(\mathcal{F}_h),
\end{equation}
where we choose $l = k+1$, which is a classical choice for HHO methods in the context of guaranteed lower eigenvalue bounds. Note that the hat notation emphasises the presence of both element and face components. For a function \( \hat{v}_h \in \hat{V}_h \), these components are  denoted by \( \hat{v}_h = (v_{\mathcal{T}_h}, v_{\mathcal{F}_h}) \), with \( v_{\mathcal{T}_h} = (v_T)_{T \in \mathcal{T}_h} \) and \( v_{\mathcal{F}_h} = (v_F)_{F \in \mathcal{F}_h} \). Furthermore, for any element $T \in \Th$, we denote the corresponding  restriction of a function \( \hat{v}_h \in \hat V_h\) by \( \hat{v}_T = (v_T, v_{\partial T}) \in \hat V_T \coloneqq \mathcal P^{k+1}(T) \times \mathcal P^{k}(\mathcal F_{\partial T})\), where~\( v_{\partial T}\) is given, for any \( F \in \mathcal{F}_{\partial T} \), as \( v_{\partial T}|_F = v_F \). The subspace of \( \hat{V}_h \) incorporating homogeneous Dirichlet boundary conditions is  defined as
\begin{equation}
	\label{eq:appsp0}
	\hat U_h \coloneqq \{ \hat v_h \in \hat{V}_h \with v_F = 0,\; \forall F  \in \mathcal{F}_h^b\}.
\end{equation}

A central component of the HHO methodology is the  reconstruction operator $\mathcal R_h\colon \hat V_h \to \mathcal P^{k+1}(\mathcal T_h)$. Given  $\hat v_h \in \hat V_h$, its reconstruction $\mathcal R_h\hat v_h \in \mathcal P^{k+1}(\mathcal T_h)$ is defined  as the unique function satisfying, for all $T \in \mathcal T_h$ and \( \varphi \in \mathcal P^{k+1}(T) \),
\begin{subequations}
	\label{eq:reconstruction}
	\begin{align}
		(\nabla (\mathcal R_h \hat{v}_h)|_T, \nabla \varphi)_{L^2(T)} &= -(v_{T}, \Delta \varphi)_{L^2(T)} + (v_{\partial T}, \partial_n \varphi)_{L^2(\partial T)},\label{eq:local_restruct}\\
		((\mathcal R_h \hat{v}_h)|_T, 1)_{L^2(T)} &= (v_{T}, 1)_{L^2(T)},\label{eq:local_restructb}
	\end{align}
\end{subequations}
where \( \partial_n \) denotes the outward normal derivative on \( \partial T \). 
%Note that~\cref{eq:local_restruct} determines $\mathcal R_h \hat v_h$ locally up to a constant which is then fixed by condition~\cref{eq:local_restructb}.  

Having introduced the reconstruction operator, we now introduce the discrete bilinear form of the HHO method. For a given stabilisation parameter \( \sigma > 0 \), it is defined for all \( \hat{v}_h, \hat{\varphi}_h \in \hat{V}_h \) by
\begin{align*}
	a_h(\hat{v}_h, \hat{\varphi}_h) \coloneqq  (\nabla_h \mathcal R_h \hat{v}_h, \nabla_h \mathcal R_h \hat{\varphi}_h)_{L^2} +   s_h(\hat v_h,\hat \varphi_h),
\end{align*}
where $\nabla_h$ denotes the $\mathcal T_h$-piecewise gradient. The stabilisation bilinear form \(s_h\) is given by \( s_h(\hat{v}_h, \hat{\varphi}_h) \coloneqq \sum_{T \in \mathcal{T}_h} s_T(\hat{v}_T, \hat{\varphi}_T) \) with \( s_T \) for all \( \hat{v}_T, \hat{\varphi}_T \in \hat{V}_T \) defined~as
\begin{multline*}
	s_T(\hat{v}_T, \hat{\varphi}_T) \coloneqq  \sigma\hspace{-.5ex}\sum_{F \in \mathcal{F}_{\partial T}}\hspace{-1ex}\Big\{ \ell_{T,F}^{-1} \left( \Pi_F^k \left( v_{\partial T} - (\mathcal{R}_h \hat{v}_h)_T \right),\, \Pi_F^k \left( \varphi_{\partial T} - (\mathcal{R}_h \hat{\varphi}_h)_T \right) \right)_{L^2(F)}\hspace{-.5ex}\Big\}\\[-1ex]
+	\sigma h_T^{-2} \left( v_T - (\mathcal{R}_h \hat{v}_h)_T,\, \varphi_T - (\mathcal{R}_h \hat{\varphi}_h)_T \right)_{L^2(T)}.
\end{multline*}
Here, the weights \( \ell_{T,F} > 0 \) are for any $T \in \mathcal T_h$ and $F \in \mathcal{F}_{\partial T}$ defined as\[
\ell_{T,F} \coloneqq \frac{|F|_{d-1} \, h_T^2}{|T_F|_d},
\]
where \(T_F \coloneqq \operatorname{conv}\{x_T, F\} \subset T\) is the convex hull of $x_T$, denoting the barycenter of the element $T$, and the face \(F\). Further, \(|S|_n\) for \(n \in \{1, \dots, d\}\) denotes the \(n\)-dimensional volume of the set \(S\).
Due to shape regularity of \(\mathcal{T}_h\), we have the scaling \(\ell_{T,F} \approx h_T\).
% Nevertheless, we use these specific weights to obtain a sharper guaranteed lower energy bound in the following section.
The bilinear form \(s_h\) serves to penalise the non-conformity of the discrete solution \(\hat{u}_h = (u_{\mathcal{T}_h}, u_{\mathcal{F}_h})\), in particular the discrepancy between \(u_{\mathcal{T}_h}\) and \(u_{\mathcal{F}_h}\) across the mesh skeleton \(\Sigma\).
Note that the above choice of stabilisation was introduced in the context of guaranteed lower eigenvalue bounds in~\cite[Eq.~(3.6)]{Tran2024}, and a related stabilisation for the case \(l = k\) was proposed in~\cite[Ex.~2.8]{Pietro2020}. The latter stabilisation, however, is unstable for our choice \(l = k+1\), cf.~\cite{Tran2024}. The discrete bilinear forms \(a_h\) and \(s_h\) induce corresponding discrete (semi-)norms, defined by \(\| \cdot \|_{a_h}^2 \coloneqq a_h(\cdot, \cdot)\) and \(| \cdot |_{s_h}^2 \coloneqq s_h(\cdot, \cdot)\), respectively.

 A discrete counterpart of the energy~\cref{eq:energy} can be defined, for any $\hat v_h \in \hat U_h$, as
\begin{equation*}
	\mathcal{E}_h(\hat{v}_h) \coloneqq  \tfrac{1}{2} a_h(\hat{v}_h, \hat{v}_h) + \tfrac{1}{2}(V v_{\mathcal{T}_h}, v_{\mathcal{T}_h})_{L^2} + \tfrac{\kappa}{4}(|v_{\mathcal{T}_h}|^2 v_{\mathcal{T}_h}, v_{\mathcal{T}_h})_{L^2},
\end{equation*}
and the resulting HHO approximation \( \hat{u}_h \in \hat U_h \) to the ground state \( u \) is characterised as the solution to the following discrete constrained minimisation problem:
\begin{equation}
	\label{eq:hhoapprox}
	\hat{u}_h \in \argmin_{\hat{v}_h \in \hat U_h \with \|v_{\mathcal{T}_h}\|_{L^2} = 1} \mathcal{E}_h(\hat{v}_h),
\end{equation}
where \( E_h \coloneqq \mathcal{E}_h(\hat{u}_h) \) denotes the corresponding discrete ground state energy. 
Unlike the continuous case, where the ground state is unique up to sign, uniqueness properties in the discrete setting remain an open question, with  results only known for the first-order lumped continuous finite element method, cf.~\cite{positivitygpe,hauck2024positivity}. Nevertheless, this non-uniqueness is typically not problematic in practice, and we proceed under the assumption that a discrete minimiser has been found. Such a discrete minimiser always exists by finite-dimensional compactness arguments (Bolzano-Weierstrass theorem). To align the sign of the discrete solution with that of the continuous ground state \( u \), we choose the sign of \( \hat u_h \) such that \( (u, u_{\mathcal T_h})_{L^2} \geq 0 \).

\section{Guaranteed lower energy bound}\label{sec:gleb}
Due to the nonlinearity, classical techniques for obtaining guaranteed lower eigenvalue bounds with HHO methods are not directly applicable. The approach in~\cite{GHL24}, which establishes such bounds for the Gross--Pitaevskii problem using lowest-order mixed finite elements and a post-processing step, relies on Jensen's inequality to handle the nonlinearity. This is feasible because the ground state is approximated by \(\mathcal{T}_h\)-piecewise constants and Jensen's inequality is applicable in this setting. However, for the HHO method defined in \cref{eq:hhoapprox}, it is unclear how to apply Jensen's inequality, since the bulk approximation space consists at least of \(\mathcal{T}_h\)-piecewise first-order polynomials. Corresponding generalizations of Jensen's inequality are not known (and also not expected to hold).

To still obtain a guaranteed lower energy bound using the HHO methodology, we introduce a modified discrete energy functional, defined for all \( \hat{v}_h \in \hat{U}_h \) as
\begin{equation}\label{eq:energymod}
	\mathcal{E}_h^0(\hat{v}_h)\coloneqq \tfrac{1}{2}a_h(\hat{v}_h,\hat{v}_h) + \tfrac{1}{2}(V v_{\mathcal T_h},v_{\mathcal T_h})_{L^2} + \tfrac{\kappa}{4}(|\Pi_{\mathcal T_h}^0 v_{\mathcal T_h}|^2 v_{\mathcal T_h},v_{\mathcal T_h})_{L^2}.
\end{equation}
The special treatment of the nonlinearity can be interpreted as applying a low-order quadrature rule to (parts of) the nonlinear term. The corresponding modified HHO approximation \( \hat{u}_h^0 \in \hat U_h \) is then defined as the solution to:
\begin{equation}
	\label{eq:hhoapprox0}
	\hat{u}_h^0\in\argmin_{\hat{v}_h\in\hat U_h\with \|v_{\mathcal T_h}\|_{L^2}=1} \mathcal E_h^0 (\hat{v}_h).
\end{equation}
Note that the modified HHO approximation is considered only for \( k = 0 \), as the low-order quadrature in \cref{eq:hhoapprox0} prevents any gain in convergence rates for \( k > 0 \).

To derive guaranteed lower energy bounds, one typically exploits the minimising property of~\cref{eq:hhoapprox0} by bounding the discrete energy from above through evaluating the discrete energy functional at a suitable interpolation of the exact solution. For this purpose, we introduce an interpolation operator as
\begin{equation*}
	\mathcal I_h\colon H^1_0(\Omega) \to \hat U_h, \qquad \mathcal I_h v \coloneqq \big( \Pi^{k+1}_{\mathcal T_h} v,\, \Pi^k_{\mathcal F_h} v \big).
\end{equation*}
The following important operator identity holds:
\begin{equation}
	\label{eq:keyidentity}
	\mathcal R_h\circ \mathcal I_h = \mathcal G_h,
\end{equation}
where  \( \mathcal G_h \colon H^1_0(\Omega) \to \mathcal P^{k+1}(\mathcal T_h) \) denotes the elliptic projection. Given a function $v \in H^1_0(\Omega)$, its elliptic projection $\mathcal G_h v \in \mathcal P^{k+1}(\mathcal T_h)$ is defined as the unique function satisfying, for all $T \in \mathcal T_h$ and \( w \in \mathcal P^{k+1}(T) \), the two conditions
\begin{subequations}
	\begin{align}
		(\nabla (\mathcal G_h v)|_T, \nabla w)_{L^2(T)} &= (\nabla v, \nabla w)_{L^2(T)}, \\
		((\mathcal G_h v)|_T, 1)_{L^2(T)} &= (v, 1)_{L^2(T)}.
	\end{align}
\end{subequations}

The desired guaranteed lower energy bound for the modified HHO approximation is stated in the following theorem.

\begin{theorem}[Guaranteed lower energy bound]\label{theo:lower_bound}
	Assume that $V \in \mathcal P^0(\mathcal T_h)$ and let the stabilisation parameter $\sigma$ and the mesh size $h$ be chosen such that
	\begin{equation}
		\label{eq:condstabilization}
		1- \sigma(\tfrac{1}{\pi^2}+C_{\mathrm{tr}}) - \tfrac{4h^2E_h^0}{\pi^2}\geq 0
	\end{equation}
	with the constant $C_{\mathrm{tr}} \coloneqq 1/\pi^2+2/(d\pi)>0$. Then, there holds the following guaranteed lower energy bound:
	\begin{equation*}
		E_h^0\leq E.
	\end{equation*}
\end{theorem}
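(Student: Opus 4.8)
The plan is to exploit the minimising property of the modified HHO approximation \cref{eq:hhoapprox0}: if I can exhibit any admissible $\hat v_h \in \hat U_h$ with $\|v_{\mathcal T_h}\|_{L^2(\Omega)} = 1$ and $\mathcal E_h^0(\hat v_h) \le E$, then $E_h^0 = \mathcal E_h^0(\hat u_h^0) \le \mathcal E_h^0(\hat v_h) \le E$ follows at once. The natural candidate is a suitable rescaling of the interpolant $\mathcal I_h u$ of the continuous ground state $u$. Writing $\alpha := \|\Pi^{k+1}_{\mathcal T_h} u\|_{L^2(\Omega)} \le \|u\|_{L^2(\Omega)} = 1$ and taking $\hat v_h := \alpha^{-1}\mathcal I_h u$ to enforce the constraint, the task reduces to bounding the four contributions to $\mathcal E_h^0(\hat v_h)$ — the reconstructed gradient term, the stabilisation, the potential term, and the modified nonlinear term — collectively by $E = \tfrac12\|\nabla u\|^2_{L^2(\Omega)} + \tfrac12(Vu,u)_{L^2} + \tfrac{\kappa}{4}\|u\|^4_{L^4(\Omega)}$.

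Three of the four pieces are handled by structural contraction properties. For the gradient term, the identity \cref{eq:keyidentity} gives $\mathcal R_h \mathcal I_h u = \mathcal G_h u$, and since the elliptic projection reproduces the elementwise $L^2$-orthogonal projection of $\nabla u$ onto broken gradients, the Pythagorean identity $\|\nabla_h \mathcal G_h u\|^2_{L^2(\Omega)} = \|\nabla u\|^2_{L^2(\Omega)} - \|\nabla_h(u - \mathcal G_h u)\|^2_{L^2(\Omega)}$ produces a genuine gain equal to the elliptic-projection energy error. For the potential term I would use the hypothesis $V \in \mathcal P^0(\mathcal T_h)$ together with the $L^2(T)$-contractivity of $\Pi^{k+1}_T$ to obtain $(V \Pi^{k+1}_{\mathcal T_h} u, \Pi^{k+1}_{\mathcal T_h} u)_{L^2} \le (Vu,u)_{L^2}$. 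For the modified nonlinear term — the entire reason for replacing one factor by $\Pi^0_{\mathcal T_h} v_{\mathcal T_h}$ — I would apply Jensen's inequality elementwise: since $\Pi^0_T \Pi^{k+1}_T u = \Pi^0_T u$ is the mean of $u$ on $T$, the integrand factorises as $|\Pi^0_T u|^2\,\|\Pi^{k+1}_T u\|^2_{L^2(T)}$, and two successive applications of Jensen/Cauchy–Schwarz bound this by $\|u\|^4_{L^4(T)}$, so that the assembled term is at most $\|u\|^4_{L^4(\Omega)}$. This step has no higher-order analogue, which is precisely why the modified method is restricted to $k=0$.

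The remaining two difficulties — the stabilisation and the normalisation defect — must be absorbed against the gradient gain, and this is where the quantitative condition \cref{eq:condstabilization} enters. I would bound $|\mathcal I_h u|^2_{s_h}$ by noting that, under \cref{eq:keyidentity}, every contribution to $s_h(\mathcal I_h u, \mathcal I_h u)$ is a face or bulk average of the elliptic-projection error $u - \mathcal G_h u$; a face trace inequality and the Payne–Weinberger Poincaré inequality on convex elements (constant $h_T/\pi$) then control it by $\sigma(1/\pi^2 + C_{\mathrm{tr}})$ times the energy error, with $C_{\mathrm{tr}} = 1/\pi^2 + 2/(d\pi)$ collecting exactly these trace and Poincaré constants. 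The normalisation defect $1 - \alpha^2 = \|u - \Pi^{k+1}_{\mathcal T_h} u\|^2_{L^2(\Omega)}$, entering through $\alpha^{-2}$ and $\alpha^{-4}$, would again be estimated by Payne–Weinberger; crucially, the resulting energy-weighted defect is rewritten in terms of the \emph{computable} discrete energy via $\|\nabla_h \mathcal R_h \hat u_h^0\|^2_{L^2(\Omega)} \le a_h(\hat u_h^0, \hat u_h^0) \le 2E_h^0$, which produces the factor $4h^2 E_h^0/\pi^2$. Collecting everything, the total correction to $E$ becomes $-\bigl[\,1 - \sigma(1/\pi^2 + C_{\mathrm{tr}}) - 4h^2 E_h^0/\pi^2\,\bigr]$ times a nonnegative squared elliptic-projection-error quantity; nonnegativity of this scalar prefactor — exactly the assumption \cref{eq:condstabilization} — forces the correction to be nonpositive and hence $\mathcal E_h^0(\hat v_h) \le E$.

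The main obstacle is this last balancing. One must track all constants sharply and in a guaranteed fashion (no hidden generic constant $C$), reconcile the quartic scaling $\alpha^{-4}$ of the rescaled nonlinear term with the quadratic scalings of the other terms, and express the normalisation correction through the computable $E_h^0$ rather than the unknown $\|\nabla u\|_{L^2(\Omega)}$ or $\lambda_1$. Getting the trace and Poincaré constants to assemble precisely into $C_{\mathrm{tr}}$ and the factor $4$ appearing in \cref{eq:condstabilization}, so that the estimate closes into a clean, a posteriori verifiable condition, is the delicate point.
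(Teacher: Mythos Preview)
Your overall strategy matches the paper's proof exactly: test the discrete minimisation against (a rescaling of) $\mathcal I_h u$, use $\mathcal R_h\mathcal I_h u = \mathcal G_h u$ and Pythagoras for the gradient gain, bound the stabilisation by $\sigma(1/\pi^2 + C_{\mathrm{tr}})\|\nabla_h(u-\mathcal G_hu)\|_{L^2}^2$ via the Poincar\'e and trace inequalities, use $V\in\mathcal P^0(\mathcal T_h)$ together with the $L^2$-contractivity of $\Pi^{k+1}_T$ for the potential term, and apply Jensen elementwise for the modified nonlinear term. All of this is precisely what the paper does.

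The one genuine gap is your mechanism for getting $E_h^0$ (rather than the unknown $E$ or $\|\nabla u\|_{L^2}$) into the final condition. You propose to bound an ``energy-weighted defect'' via $\|\nabla_h\mathcal R_h\hat u_h^0\|_{L^2}^2 \le 2E_h^0$, but the defect terms you produce multiply quantities built from $\mathcal I_h u$ and $u$, not from the discrete minimiser $\hat u_h^0$; that inequality is simply not applicable there. The paper's resolution is purely algebraic and avoids this difficulty: instead of normalising and tracking $\alpha^{-2}$, $\alpha^{-4}$ on the right-hand side, one uses the pseudo-Rayleigh-quotient form of \cref{eq:hhoapprox0} and tests with the \emph{unnormalised} $\mathcal I_h u$. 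This places $E_h^0\,\alpha^4$ on the \emph{left}-hand side. After bounding the right-hand side by $E - \tfrac12\bigl(1-\sigma(\tfrac{1}{\pi^2}+C_{\mathrm{tr}})\bigr)\|\nabla_h(u-\mathcal G_hu)\|_{L^2}^2$ (using $\alpha^2\le 1$ to drop the harmless extra factors on the quadratic terms) and expanding $\alpha^4 \ge 1 - \tfrac{2h^2}{\pi^2}\|\nabla_h(u-\mathcal G_hu)\|_{L^2}^2$ on the left, the term $E_h^0\cdot\tfrac{2h^2}{\pi^2}\|\nabla_h(u-\mathcal G_hu)\|_{L^2}^2$ moves to the right and supplies the factor $\tfrac{4h^2E_h^0}{\pi^2}$ in \cref{eq:condstabilization}. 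The computable $E_h^0$ thus enters from the inequality $E_h^0 \le \mathcal E_h^0(\cdot)$ itself, not from any auxiliary estimate on $\hat u_h^0$.
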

\begin{proof}
	The discrete energy of the ground state approximation $\hat u_h^0$ from \cref{eq:hhoapprox0} is characterised by the pseudo-Rayleigh quotient
	\begin{multline*}
		E_h^0 = \min_{\hat{v}_h\in \hat U_h \with \|v_{\mathcal T_h}\|_{L^2}>0 }\frac{1}{{\|v_{\mathcal T_h}\|_{L^2}^4}}\Big\{\tfrac{1}{2}(\nabla_h \mathcal R_h \hat{v}_h,\nabla_h \mathcal R_h \hat{v}_h)_{L^2}\|v_{\mathcal T_h}\|_{L^2}^2		\\
		\qquad \qquad \;+ \tfrac{1}{2}s_h(\hat{v}_h,\hat{v}_h)\|v_{\mathcal T_h}\|_{L^2}^2
		+\tfrac{1}{2}(Vv_{\mathcal T_h},v_{\mathcal T_h})_{L^2}\|v_{\mathcal T_h}\|_{L^2}^2 + \tfrac{\kappa}{4}(|\Pi_{\mathcal T_h}^0 v_{\mathcal T_h}|^2v_{\mathcal T_h},v_{\mathcal T_h})_{L^2}\Big\}.
	\end{multline*}
	Next, we majorize $E_h^0$ by choosing $\hat{v}_h = \mathcal I_hu \in \hat U_h$ and use \cref{eq:keyidentity}, which results in
	\begin{align*}
%		\label{eq:low_bound:1}
%		\begin{aligned}
			E_h^0 \|\Pi_{\mathcal T_h}^{k+1}u\|_{L^2}^4 &\leq \tfrac{1}{2}(\nabla_h \mathcal G_hu,\nabla_h \mathcal G_hu)_{L^2}\|\Pi_{\mathcal T_h}^{k+1}u\|_{L^2}^2+ \tfrac{1}{2}s_h(\mathcal I_hu,\mathcal I_hu)\|\Pi_{\mathcal T_h}^{k+1}u\|_{L^2}^2\notag\\[.5ex]
			&\quad+\tfrac{1}{2}(V \Pi_{\mathcal T_h}^{k+1}u,\Pi_{\mathcal T_h}^{k+1}u)_{L^2}\|\Pi_{\mathcal T_h}^{k+1}u\|_{L^2}^2 
			+ (|\Pi_{\mathcal T_h}^{0}u|^2\Pi_{\mathcal T_h}^{k+1}u,\Pi_{\mathcal T_h}^{k+1}u)_{L^2}.\label{eq:lhs}.
%		\end{aligned}
	\end{align*}
	
	In the following, we bound all the terms on the right-hand side of the latter inequality individually, where we note that $\|\Pi_{\mathcal T_h}^{k+1}u\|_{L^2}^2\leq \|u\|_{L^2}^2 =1$. For the first term, we obtain using Pythagoras theorem, that
\begin{align*}
	 (\nabla_h\mathcal G_hu,\nabla_h\mathcal G_hu)_{L^2}
	 = \|\nabla u\|_{L^2}^2 - \|\nabla_h(u-\mathcal G_hu)\|_{L^2}^2.
\end{align*}
The second term can be estimated using \cref{eq:keyidentity},  \cref{lem:poincare,lem:trace_theo}, and the stability properties of the $L^2$-projections $\Pi_{\mathcal T_h}^{k+1}$ and $\Pi_{\mathcal F_h}^k$, as
\begin{align*}
	s_h(\mathcal I_hu,\mathcal I_hu) 
	\leq \sigma(\tfrac{1}{\pi^2}+C_{\text{tr}})\|\nabla_h(u-\mathcal G_hu)\|_{L^2}^2.
\end{align*}
For estimating the third term, let us recall that $V \in \mathcal P^0(\mathcal T_h)$. This allows to split up the $L^2$-inner product into local element contributions. Using the local stability properties of $\Pi_T^{k+1}$, this yields that
\begin{align*}
	(V \Pi_{\mathcal T_h}^{k+1}u,\Pi_{\mathcal T_h}^{k+1}u)_{L^2} = \sum_{T \in \mathcal{T}_h} V|_T \|\Pi_T^{k+1}u\|_{L^2(T)}^2\leq  (Vu,u)_{L^2}.
\end{align*}
To estimate the fourth term, we again split up the inner product in local element contributions. Noting that $\Pi_{T}^0  \Pi_{T}^{k+1}v = \Pi_{T}^0v$ for all $v \in L^2(T)$, we obtain with Jensen's inequality for all $T \in \mathcal T_h$ that
	\begin{align*}
		(|\Pi_T^0u|^2\Pi_T^{k+1} u,\Pi_T^{k+1} u)_T\leq  \Big(\fint_T u\dx\Big)^2 (u,u)_T \leq |T|_d \Big(\fint_T u^2\dx\Big)^2  \leq %|T| \fint u^4\dx
		 (|u|^2u,u)_T.
	\end{align*}
	Summing up over all $T \in \mathcal T_h$ then results in 
	\begin{equation*}
		(|\Pi_{\mathcal T_h}^0u|^2\Pi_{\mathcal T_h}^{k+1} u,\Pi_{\mathcal T_h}^{k+1} u)_{L^2}\leq (|u|^2u,u)_{L^2}.
	\end{equation*}
	
	Combining the previous estimates gives 
		\begin{align}\label{eq:lhs}
		E_h^0 \|\Pi_{\mathcal T_h}^{k+1} u\|_{L^2}^4 \leq E - \tfrac12\big(1-\sigma(\tfrac1{\pi^2} + C_\mathrm{tr})\big) \|\nabla_h(u-\mathcal G_h u)\|_{L^2}^2.
	\end{align}
		For rewriting the left-hand side of \cref{eq:lhs}, we use the Pythagorean identity and the best-approximation property of the $L^2$-projection $\Pi_{\mathcal T_h}^{k+1}$ and  \cref{lem:poincare} to get that
		\begin{align*}
				\|\Pi_{\mathcal T_h}^{k+1}u\|_{L^2}^4 & = \big(\|u\|_{L^2}^2-\|u-\Pi_{\mathcal T_h}^{k+1}u\|_{L^2}^2\big)^2
				\geq \big(1-\|u-\mathcal G_hu\|_{L^2}^2\big)^2
				\\
				& \geq 1-\tfrac{2h^2}{\pi^2}\|\nabla_h(u-\mathcal G_hu)\|_{L^2}^2.
			\end{align*}
		Finally, the combination of the previous estimates leads to the inequality
		\begin{equation*}
				E_h^0\leq E - \tfrac{1}{2}\big(1- \sigma(\tfrac{1}{\pi^2}+C_{\mathrm{tr}}) - \tfrac{4h^2E_h^0}{\pi^2}\big)\|\nabla_h(u-\mathcal G_hu)\|_{L^2}^2,
			\end{equation*}
		which completes the proof.
\end{proof}

\begin{remark}[Assumption of \(\mathcal{T}_h\)-piecewise constant potentials]
The assumption \(V \in \mathcal{P}^0(\mathcal{T}_h)\) in \cref{theo:lower_bound} is essential for proving the guaranteed lower energy bound. For simple potentials, where the minimum value on each element can be easily computed (e.g., the harmonic potential \(V(x) = \tfrac{1}{2} |x|^2\)), one can approximate \(V\) by a piecewise constant function that assigns this minimum value to each element. Applying the modified HHO method \cref{eq:hhoapprox0} to this piecewise constant potential approximation then yields a guaranteed lower energy bound for the original~problem.
\end{remark}

\section{A priori error analysis}\label{sec:convergence}

In this section, we present an a priori error analysis of the HHO approximation introduced in \cref{eq:hhoapprox} and its modified version from \cref{eq:hhoapprox0}. The analysis is inspired by the works~\cite{Zho04,CCM10} on classical conforming finite element methods. Results from the conforming setting are used at several points in the analysis, along with technical tools that enable their application in the present non-conforming setting.
	
The Euler--Lagrange equations associated with the constrained minimisation problem \cref{eq:gs} give rise to the following eigenvalue problem: seek \( (u, \lambda) \in H^1_0(\Omega) \times \mathbb{R} \) with \( \|u\|_{L^2(\Omega)} = 1 \) such that, for any $\varphi \in H^1_0(\Omega)$, it holds that
\begin{equation}\label{eq:eigen_pro:conti}
	(\nabla u, \nabla \varphi)_{L^2} + (V u, \varphi)_{L^2} + \kappa(u^3, \varphi)_{L^2} = \lambda(u, \varphi)_{L^2}.
\end{equation}
Recall that the eigenvalue  associated with the ground state, referred to as the ground state eigenvalue, is the smallest among all eigenvalues of the problem.

Similarly, also any discrete ground state defined as the solution to the discrete constrained minimisation problem \cref{eq:hhoapprox} satisfies the discrete eigenvalue problem: seek $(u_h,\lambda_h) \in \hat U_h \times \mathbb R$ with $\|u_{\mathcal T_h}\|_{L^2} = 1$ such that, for all $\hat \varphi_h \in \hat U_h$, it holds that
\begin{equation}\label{eq:eigen_pro:discrete}
	a_h(\hat{u}_h,\hat{\varphi}_h) + (Vu_{\mathcal{T}_h},\varphi_{\mathcal{T}_h})_{L^2} +\kappa  (u_{\mathcal{T}_h}^3,\varphi_{\mathcal{T}_h})_{L^2}  = \lambda_h (u_{\mathcal{T}_h},\varphi_{\mathcal{T}_h})_{L^2
	},
\end{equation}
where we recall the notation \( \hat{u}_h = (u_{\mathcal{T}_h}, u_{\mathcal{F}_h}) \) and analogously for the test function~\( \hat{\varphi}_h \).
The discrete ground state eigenvalue \( \lambda_h \) is not necessarily the smallest among all discrete eigenvalues. As in the continuous setting, the discrete ground state energy and corresponding eigenvalue are related by $\lambda_h = 2E_h +\tfrac{\kappa}{2}\|u_{\mathcal T_h}\|_{L^4}^4$.

\begin{remark}[Tilde notation]
	In the following, we will write $a \lesssim b$ or $b\gtrsim a$ if it holds that $a \leq C b$ or $a \geq C b$, respectively, where $C>0$ is a constant that may depend on the domain, the mesh regularity, the coefficients $V$ and $\kappa$, the ground state $u$, and the polynomial degree $k$, but is independent of the mesh size $h$.
\end{remark}

Our first objective is to prove the plain convergence of the HHO method; precise convergence rates will be derived later.

\begin{theorem}[Plain convergence of HHO method]\label{theo:plain_conver}
	As $h\rightarrow 0$, it holds that
	\begin{equation*}
		 \|\nabla_h(\mathcal R_h\hat{u}_h -u)\|_{L^2} \rightarrow 0,\quad \|u-u_{\mathcal{T}_h}\|_{L^2}\rightarrow 0,\quad 
		 E_h\rightarrow E,\quad \lambda_h\rightarrow \lambda.
	\end{equation*}
\end{theorem}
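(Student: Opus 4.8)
The plan is to establish plain convergence by first extracting, via compactness, a limit of the discrete ground states and then identifying this limit with the continuous ground state $u$. The natural starting point is to obtain an a priori bound showing that $\|\mathcal R_h \hat u_h\|_{a_h}$ (equivalently $\|\nabla_h \mathcal R_h \hat u_h\|_{L^2} + |\hat u_h|_{s_h}$) is uniformly bounded in $h$. This follows from the minimisation property \cref{eq:hhoapprox}: testing the energy against a fixed admissible interpolant such as $\mathcal I_h \tilde u$ for a smooth approximation $\tilde u$ of $u$ (suitably renormalised in the discrete $L^2$-norm), and using the operator identity \cref{eq:keyidentity} together with approximation properties of $\mathcal G_h$, yields $\mathcal E_h(\hat u_h) \le \mathcal E_h(\mathcal I_h \tilde u) \le C$ uniformly. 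Since the potential and nonlinear terms are nonnegative, this bounds $a_h(\hat u_h, \hat u_h)$, and hence both the reconstructed gradient and the stabilisation seminorm, uniformly in $h$.

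The second step is to pass to a limit. I would lift the reconstructions $\mathcal R_h \hat u_h \in \mathcal P^{k+1}(\mathcal T_h)$ to a comparison with a conforming object: the uniform bound on $\|\nabla_h \mathcal R_h \hat u_h\|_{L^2}$ together with the stabilisation control on the jumps implies, by a broken-Poincaré/discrete-compactness argument for HHO spaces, that there is a subsequence along which $\mathcal R_h \hat u_h \rightharpoonup u^*$ weakly in a broken $H^1$ sense, strongly in $L^2$, for some $u^* \in H^1_0(\Omega)$, and that the nonconformity (measured by $|\hat u_h|_{s_h}$ and the consistency error of $\nabla_h \mathcal R_h$) vanishes so that $u^*$ is genuinely $H^1_0$-conforming. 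The strong $L^2$-convergence is what will let me pass to the limit in the quartic nonlinear term $(u_{\mathcal T_h}^3, \varphi_{\mathcal T_h})$, and the normalisation constraint $\|u_{\mathcal T_h}\|_{L^2}=1$ should pass to the limit so that $\|u^*\|_{L^2}=1$; here I would use that $u_{\mathcal T_h}$ and $\mathcal R_h \hat u_h$ are close in $L^2$ (controlled again by the stabilisation term and approximation estimates).

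The third step is to identify the limit and upgrade to strong convergence. By weak lower semicontinuity of the (continuous) energy and an asymptotic consistency estimate $\mathcal E(u^*) \le \liminf \mathcal E_h(\hat u_h)$, combined with the complementary bound $\limsup \mathcal E_h(\hat u_h) \le \mathcal E_h(\mathcal I_h u) \to \mathcal E(u) = E$ obtained from the minimisation property and \cref{eq:keyidentity} applied to $u$ itself, I conclude $\mathcal E(u^*) \le E$. Since $u^*$ is admissible ($u^* \in H^1_0(\Omega)$, $\|u^*\|_{L^2}=1$) and $u$ is the unique minimiser up to sign, this forces $u^* = \pm u$, and the sign convention $(u, u_{\mathcal T_h})_{L^2}\ge 0$ selects $u^*=u$. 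The matching of energies $\mathcal E_h(\hat u_h) \to E = \mathcal E(u^*)$ then forces convergence of each energy contribution; in particular the gradient norms converge, which together with weak convergence gives the strong convergence $\|\nabla_h(\mathcal R_h \hat u_h - u)\|_{L^2}\to 0$ and $|\hat u_h|_{s_h}\to 0$. The $L^2$-convergence $\|u - u_{\mathcal T_h}\|_{L^2}\to 0$ follows from the $L^2$-convergence of $\mathcal R_h \hat u_h$ and the vanishing stabilisation. Finally $E_h \to E$ is the energy convergence just established, and $\lambda_h \to \lambda$ follows from the identity $\lambda_h = 2E_h + \tfrac{\kappa}{2}\|u_{\mathcal T_h}\|_{L^4}^4$, its continuous analogue, $E_h\to E$, and the $L^4$-convergence $\|u_{\mathcal T_h}\|_{L^4}\to\|u\|_{L^4}$ (again from strong $L^2$ plus uniform bounds, or directly from the energy convergence).

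The main obstacle I anticipate is the discrete compactness and consistency argument in the second and third steps: because the HHO method is nonconforming and uses broken polynomial spaces with a stabilisation rather than true interelement continuity, one must carefully show that the weak limit of $\mathcal R_h \hat u_h$ lies in $H^1_0(\Omega)$ and that $\liminf \mathcal E_h(\hat u_h) \ge \mathcal E(u^*)$ despite $a_h$ containing the stabilisation $s_h$. Controlling the nonlinear term $(u_{\mathcal T_h}^3,\varphi_{\mathcal T_h})$ in the passage to the limit is delicate in three dimensions, where the Sobolev embedding $H^1(\Omega)\hookrightarrow L^6(\Omega)$ is only just enough for the quartic term, so I would lean on the uniform broken-$H^1$ bound and strong $L^2$ (interpolated with the gradient bound to get strong $L^p$ for $p<6$) to justify convergence of the nonlinearity.
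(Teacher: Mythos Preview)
Your outline is a valid direct/variational route (compactness plus a $\Gamma$-convergence style identification of the limit), but it is \emph{not} the route the paper takes. The paper avoids discrete compactness entirely: after the same uniform bound on $E_h$ and $\|\hat u_h\|_{a_h}$, it introduces the conforming auxiliary function $u_h^c\in H^2(\Omega)\cap H^1_0(\Omega)$ solving $-\Delta u_h^c=\lambda_h u_{\mathcal T_h}-Vu_{\mathcal T_h}-\kappa u_{\mathcal T_h}^3$, observes that $\hat u_h$ is precisely the HHO approximation of this \emph{linear} Poisson problem, and invokes the standard HHO a~priori estimates to get $\|\nabla_h(\mathcal R_h\hat u_h-u_h^c)\|_{L^2}+\|u_{\mathcal T_h}-u_h^c\|_{L^2}\lesssim h$. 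Normalising $u_h^c$ in $L^2$ gives a conforming admissible competitor $\tilde u_h^c$ with $|E_h-\mathcal E(\tilde u_h^c)|\lesssim h$, and the sandwich $E\le \mathcal E(\tilde u_h^c)$, $E_h-E\lesssim h^2$ yields $E_h\to E$ and $\mathcal E(\tilde u_h^c)\to E$; then \cite[Thm.~1]{CCM10} (which is the conforming analogue of your identification step) gives $\tilde u_h^c\to u$ in $H^1$, and the discrete convergences follow by triangle inequality with the linear HHO estimates.

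The trade-off: the paper's approach reduces everything to known linear HHO theory plus a known conforming result, bypassing the discrete-Rellich and $H^1_0$-limit-identification machinery you flag as the main obstacle; your approach is more self-contained and conceptually standard, but you would indeed have to supply a discrete compactness lemma for the HHO pair $(u_{\mathcal T_h},\mathcal R_h\hat u_h)$ and check carefully that the weak limit of $\nabla_h\mathcal R_h\hat u_h$ is the distributional gradient of the $L^2$-limit (this is where the stabilisation control on jumps enters). Both are legitimate; the paper's detour through $u_h^c$ is shorter given the tools already available.
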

\begin{proof}
We begin by establishing the uniform boundedness of the discrete ground state energies~\( E_h \). Using definition \cref{eq:hhoapprox}, we directly obtain the estimate \( E_h \leq \mathcal{E}_h(\mathcal{I}_h u / \|\Pi_{\mathcal{T}_h}^{k+1} u\|_{L^2}) \). Its right-hand side can be bounded independently of \( h \) by an argument similar to that used in the proof of \cref{theo:lower_bound}. Specifically, we apply \cref{lem:trace_theo}, \cref{eq:keyidentity}, along with the uniform boundedness of \( \mathcal{G}_h \) in the \( H^1 \)-seminorm and of \( \Pi_{\mathcal{T}_h}^{k+1} \) in the \( L^4 \)-norm. Importantly, this bound is obtained without requiring any restriction on the stabilization parameter \( \sigma \).
As a direct consequence, we obtain the uniform boundedness of \( \lambda_h \) and \( \|\hat{u}_h\|_{a_h} \), which in turn implies the uniform \( L^6 \)-boundedness of \( u_{\mathcal{T}_h} \) using \cref{prop:discrete:sobolev:embed}.

Consider the auxiliary Poisson problem, which seeks \( u_h^c \in H_0^1(\Omega) \) such that
\begin{equation}
	\label{eq:ustar}
	-\Delta u_h^c = -V u_{\mathcal{T}_h} - \kappa u_{\mathcal{T}_h}^3 + \lambda_h u_{\mathcal{T}_h} \eqqcolon f_h
\end{equation}
holds in the weak sense. Since the \( L^2 \)-norm of the right-hand side \( f_h \) is uniformly bounded, classical elliptic regularity results imply that \( u_h^c \in H^2(\Omega) \cap H_0^1(\Omega) \), with \( H^2 \)-norm uniformly bounded. Noting that \( \hat{u}_h \) is the HHO approximation of the solution~\( u_h^c \) to Poisson problem \cref{eq:ustar} with  right-hand side \( f_h \), classical approximation results for the HHO method (see, e.g., \cite[Thm.~2.27~\&~2.28]{Pietro2020}) show that 
\begin{align}
	\label{eq:errorloworder}
	\|\hat{u}_h - \mathcal I_hu_h^c\|_{a_h} +
	\|\nabla_h(\mathcal R_h\hat{u}_h -u_h^c)\|_{L^2} + |\hat{u}_h|_{s_h}\lesssim h.
\end{align}
From the latter estimate we can also derive the $L^2$-error estimate
\begin{align}
		\label{eq:errorloworderL2}
	\|u_{\mathcal{T}_h}-u_h^c\|_{L^2}&\leq \|u_{\mathcal{T}_h}- \Pi_{\mathcal T_h}^{k+1}u_h^c\|_{L^2} + \|u_h^c- \Pi_{\mathcal T_h}^{k+1}u_h^c\|_{L^2}\lesssim h,
\end{align}
where we used \cref{prop:discrete:sobolev:embed} and the approximation properties of $\Pi^{k+1}_{\mathcal T_h}$.

Next, we define an \( L^2 \)-normalised version of \( u_h^c \) as \( \tilde{u}_h^c \coloneqq u_h^c / \|u_h^c\|_{L^2} \). Using the normalisation condition \( \|u_{\mathcal{T}_h}\|_{L^2} = 1 \) and \cref{eq:errorloworderL2}, one can show that \({ |\|u_h^c\|_{L^2} - 1| \lesssim h} \), so that the normalisation introduces only a perturbation of order~\( h \). Therefore,  we can derive from  \cref{eq:errorloworder} and \cref{eq:errorloworderL2} the estimate
\begin{equation}
	\label{eq:errornormalizedstar}
	  \|\nabla_h(\mathcal R_h\hat{u}_h -\tilde{u}_h^c)\|_{L^2} + \|u_{\mathcal{T}_h} - \tilde{u}_h^c\|_{L^2}\lesssim h.
\end{equation}
For the difference between \( E_h \) and \( E_h^c \coloneqq \mathcal{E}(\tilde{u}_h^c) \) we obtain that
\begin{align}
	\label{eq:estenergiesstar}
	\begin{split}
		&	|E_h - E_h^c| \leq \tfrac{1}{2} \left| (\nabla_h \mathcal{R}_h \hat{u}_h, \nabla_h \mathcal{R}_h \hat{u}_h)_{L^2} - (\nabla \tilde{u}_h^c, \nabla \tilde{u}_h^c)_{L^2} \right| + \tfrac{1}{2} \left| s_h(\hat{u}_h, \hat{u}_h) \right| \\
		&\; 	+ \tfrac{1}{2} \left| (V u_{\mathcal{T}_h}, u_{\mathcal{T}_h})_{L^2} - (V \tilde{u}_h^c, \tilde{u}_h^c)_{L^2} \right| + \tfrac{\kappa}{4} \left| (u_{\mathcal{T}_h}^3, u_{\mathcal{T}_h})_{L^2} - ((\tilde{u}_h^c)^3, \tilde{u}_h^c)_{L^2} \right| \lesssim h,
	\end{split}
\end{align}
where the last estimate follows from \cref{eq:errornormalizedstar,eq:errorloworder}, the uniform \( L^6 \)-boundedness of \( u_{\mathcal{T}_h} \), and the uniform \( L^\infty \)-boundedness of \( \tilde{u}_h^c \). The latter is a consequence of the Sobolev embedding \( H^2(\Omega) \hookrightarrow \mathcal{C}^0(\overline{\Omega}) \) and the uniform $H^2$-boundedness of \( \tilde{u}_h^c \).

Noting that \( E_h - E \leq \mathcal{E}_h\left( \mathcal{I}_h u / \|u_{\mathcal{T}_h}\|_{L^2} \right) - E \), and proceeding similar as in the first part of this proof where we establish the uniform boundedness of \( E_h \), we obtain that \( E_h - E \lesssim h^2 \). Together with \( E \leq E_h^c \) and \cref{eq:estenergiesstar}, this leads to
\[
-h \lesssim E_h - E_h^c \leq E_h - E \lesssim h^2,
\]
which directly implies that \( E_h \to E \) as \( h \to 0 \).
Using an argument similar to that in \cite[Thm.~1]{CCM10}, one can further show that \( \|u - \tilde{u}_h^c\|_{H^1} \to 0 \). In combination with estimate~\cref{eq:errornormalizedstar}, this implies the plain convergence results
\begin{equation}
	\label{eq:L2estimateu}
	\|u - u_{\mathcal{T}_h}\|_{L^2} \to 0, \quad \|\nabla_h(\mathcal{R}_h \hat{u}_h - u)\|_{L^2} \to 0.
\end{equation}
From the first estimate in \cref{eq:L2estimateu}, we additionally obtain that
\[
|\lambda - \lambda_h| \leq 2|E - E_h| + \tfrac{\kappa}{2} \left| \|u_{\mathcal{T}_h}\|_{L^4}^4 - \|u\|_{L^4}^4 \right| \to 0,
\]
using the uniform \( L^6 \)-boundedness of \( u_{\mathcal{T}_h} \), along with an \( L^\infty \)-estimate for \( u \). \qedhere
\end{proof}

An important step in proving convergence rates is to write the ground state \( u \in H^1_0(\Omega) \) as the weak solution to the auxiliary Poisson problem
\begin{equation}
	\label{eq:auxpoisson}
	- \Delta u = \lambda u - V u - \kappa u^3 \eqqcolon f,
\end{equation}
with homogeneous Dirichlet boundary conditions on $\partial \Omega$. This allows us to construct a discrete ground state approximation by considering the HHO discretisation of problem~\cref{eq:auxpoisson}. The latter seeks \( \hat{v}_h =(v_{\mathcal T_h},v_{\mathcal F_h})\in \hat{U}_h \) such that, for all \( \hat{\varphi}_h \in \hat{U}_h \),
\begin{equation}
	a_h(\hat{v}_h, \hat{\varphi}_h) = (f, \varphi_{\mathcal{T}_h})_{L^2},
\end{equation}
and classical HHO theory shows optimal convergence rates for this approximation; see, e.g.,~\cite[Thm.~2.27~\&~2.28]{Pietro2020}. We emphasise that this approximation is introduced solely for theoretical analysis and is not computed in practice. Since \( \hat{v}_h \) is generally not \( L^2 \)-normalised, we define its $L^2$-normalised counterpart by
\[
\hat{w}_h \coloneqq {\hat{v}_h}\big/{\|v_{\mathcal{T}_h}\|_{L^2}}.
\]
In the following convergence proof, we apply the triangle inequality to split the error into two components: the error between \( u \) and \( \hat{w}_h \), and the error between \( \hat{w}_h \) and \( \hat{u}_h \). The first of these two errors is estimated in the lemma below. In what follows, we denote for any $s \in \mathbb N$ by $H^s(\mathcal T_h)$ the broken Sobolev space consisting of functions whose restriction to each element $T \in \mathcal T_h$ belongs to $H^s(T)$.  

\begin{lemma}[Bound of first term]\label{lem:firstterm}
	Assume that, for some $0\leq r\leq k$, it holds that $u\in H^{r+2}(\mathcal{T}_h)$. Then, we have the following approximation results:
	\begin{equation}\label{eq:err1}
		\|w_{\mathcal T_h} - u\|_{L^2} + \|\hat{w}_h - \mathcal I_hu\|_{a_h} +	\|\nabla_h(\mathcal R_h\hat{w}_h -u)\|_{L^2} + |\hat{w}_h|_{s_h}\lesssim h^{r+1}.
	\end{equation}
\end{lemma}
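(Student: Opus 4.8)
The plan is to transfer the classical a priori error estimates for the HHO approximation of a Poisson problem to the normalised iterate $\hat{w}_h$, the only genuinely new ingredient being the control of the normalisation factor. First I would apply the standard HHO convergence theory (e.g.\ \cite[Thm.~2.27]{Pietro2020}) to the discretisation of the auxiliary Poisson problem \cref{eq:auxpoisson}. Since $u \in H^{r+2}(\mathcal{T}_h)$ with $0 \le r \le k$ solves \cref{eq:auxpoisson} and $\hat{v}_h$ is its HHO approximation, this immediately yields
\[
\|\hat{v}_h - \mathcal I_h u\|_{a_h} + \|\nabla_h(\mathcal R_h \hat{v}_h - u)\|_{L^2} + |\hat{v}_h|_{s_h} \lesssim h^{r+1}.
\]
From here, I would bound the bulk $L^2$-error exactly as in \cref{eq:errorloworderL2}, splitting $\|v_{\mathcal T_h} - u\|_{L^2} \le \|v_{\mathcal T_h} - \Pi_{\mathcal T_h}^{k+1} u\|_{L^2} + \|\Pi_{\mathcal T_h}^{k+1} u - u\|_{L^2}$, estimating the first summand by the discrete Sobolev embedding \cref{prop:discrete:sobolev:embed} applied to $\hat{v}_h - \mathcal I_h u \in \hat U_h$ (whose bulk component is $v_{\mathcal T_h} - \Pi_{\mathcal T_h}^{k+1} u$), and the second by the approximation properties of $\Pi_{\mathcal T_h}^{k+1}$. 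Note that the lemma only asks for the suboptimal rate $h^{r+1}$ here, so no Aubin--Nitsche duality is needed: the energy estimate together with the discrete Poincaré inequality already gives $\|v_{\mathcal T_h} - u\|_{L^2} \lesssim h^{r+1}$.

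The next step is to control the normalisation factor $\alpha \coloneqq \|v_{\mathcal T_h}\|_{L^2}$. Using $\|u\|_{L^2} = 1$ and the reverse triangle inequality together with the previous $L^2$-bound, I obtain $|\alpha - 1| \le \|v_{\mathcal T_h} - u\|_{L^2} \lesssim h^{r+1}$, so $\hat{w}_h = \hat{v}_h/\alpha$ differs from $\hat{v}_h$ only by a perturbation of order $h^{r+1}$. I would also record that the un-normalised quantities are uniformly bounded: indeed $\|\hat{v}_h\|_{a_h} \le \|\mathcal I_h u\|_{a_h} + \|\hat{v}_h - \mathcal I_h u\|_{a_h}$ stays bounded, since $\|\mathcal I_h u\|_{a_h}$ is controlled via \cref{eq:keyidentity} (as in the proof of \cref{theo:lower_bound}, the reconstruction contribution equals $\|\nabla_h \mathcal G_h u\|_{L^2} \le \|\nabla u\|_{L^2}$ and the stabilisation is bounded by $\|\nabla_h(u - \mathcal G_h u)\|_{L^2}$), while the error term is $O(h^{r+1})$. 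The same boundedness then follows for $\|\nabla_h \mathcal R_h \hat{v}_h\|_{L^2}$ and $|\hat{v}_h|_{s_h}$.

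Finally, I would propagate everything to $\hat{w}_h$ by a term-by-term triangle-inequality argument. Using linearity of $\mathcal R_h$ and of the bilinear form $s_h$, I get, for instance, $\|\nabla_h(\mathcal R_h \hat{w}_h - u)\|_{L^2} \le |\alpha^{-1}-1|\,\|\nabla_h \mathcal R_h \hat{v}_h\|_{L^2} + \|\nabla_h(\mathcal R_h \hat{v}_h - u)\|_{L^2}$ and $|\hat{w}_h|_{s_h} = \alpha^{-1} |\hat{v}_h|_{s_h}$, both of which are $\lesssim h^{r+1}$ by the boundedness just established and $|\alpha - 1| \lesssim h^{r+1}$; the terms $\|w_{\mathcal T_h} - u\|_{L^2}$ and $\|\hat{w}_h - \mathcal I_h u\|_{a_h}$ are handled identically, e.g.\ $\|w_{\mathcal T_h} - u\|_{L^2} \le |1-\alpha| + \|v_{\mathcal T_h} - u\|_{L^2}$. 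I expect the only point requiring care to be ensuring that the classical HHO rate $h^{r+1}$ is indeed available for \cref{eq:auxpoisson}: this requires invoking the cited energy-norm estimate with the solution regularity $u \in H^{r+2}(\mathcal T_h)$ rather than with the regularity of the nonlinear, lower-regularity right-hand side $f = \lambda u - Vu - \kappa u^3$, which is exactly the form in which \cite[Thm.~2.27]{Pietro2020} is stated. Everything else reduces to the routine perturbation bookkeeping sketched above.
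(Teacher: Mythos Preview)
Your proposal is correct and follows essentially the same approach as the paper: apply the classical HHO error estimates for the Poisson problem to $\hat v_h$, derive $\|v_{\mathcal T_h}-u\|_{L^2}\lesssim h^{r+1}$ via the same splitting through $\Pi_{\mathcal T_h}^{k+1}u$ and the discrete Sobolev embedding, deduce $|\,\|v_{\mathcal T_h}\|_{L^2}-1\,|\lesssim h^{r+1}$, and then transfer everything to $\hat w_h$. You spell out the final perturbation step (uniform boundedness of $\|\hat v_h\|_{a_h}$ and the term-by-term triangle-inequality argument) in more detail than the paper, which simply says this ``readily implies the assertion,'' but the substance is the same.
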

\begin{proof}
Applying classical HHO theory (see, e.g., \cite[Thm.~2.27 \& 2.28]{Pietro2020}), yields an error estimate of the form \cref{eq:err1} for the HHO approximation \( \hat{v}_h \). To transfer this estimate to its \( L^2 \)-normalised counterpart \( \hat{w}_h \), we observe that
\[
\| v_{\mathcal{T}_h} - u \|_{L^2} 
\leq \| v_{\mathcal{T}_h} - \Pi_{\mathcal T_h}^{k+1} u \|_{L^2} + \| \Pi_{\mathcal T_h}^{k+1} u - u \|_{L^2}\lesssim h^{r+1},
\]
where we used \cref{prop:discrete:sobolev:embed}  along with the approximation properties of~\( \Pi_{\mathcal T_h}^{k+1} \).
Due to the $L^2$-normalization condition \( \| u \|_{L^2} = 1 \), it follows that \( \left| \| v_{\mathcal{T}_h} \|_{L^2} - 1 \right| \lesssim h^{r+1} \), which readily implies the assertion.
\end{proof}

It remains to bound the second error, which is done in the following lemma.

\begin{lemma}[Bound of second term]\label{lem:err_est}
		Assume that, for some $0\leq r\leq k$, it holds that $u\in H^{r+2}(\mathcal{T}_h)$. Then, we have the following estimate:
	\begin{align*}
		&\|u_{\mathcal{T}_h} - w_{\mathcal{T}_h}\|_{L^2} + \|\nabla_h\mathcal R_h(\hat{u}_h-\hat{w}_h)\|_{L^2} + |\hat{u}_h|_{s_h} + |\hat{u}_h-\hat{w}_h|_{s_h}\\ & \ \ \ \ \ \ \lesssim \|(u_{\mathcal{T}_h} - u)^2\|_{L^3} + h\|u_{\mathcal{T}_h} - u\|_{L^2} + h|\lambda_h-\lambda|+ h^{r+1}.
	\end{align*}
\end{lemma}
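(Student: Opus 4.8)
\textbf{Proof plan for \Cref{lem:err_est}.}
The plan is to exploit that both $\hat u_h$ and $\hat w_h$ are HHO approximations of Poisson-type problems sharing the same bilinear form $a_h$, differing only in their right-hand sides. Specifically, $\hat w_h$ solves the HHO discretisation of \cref{eq:auxpoisson} (after $L^2$-normalisation), whose right-hand side involves $f = \lambda u - Vu - \kappa u^3$, while $\hat u_h$ satisfies the discrete eigenvalue problem \cref{eq:eigen_pro:discrete}, whose right-hand side involves $\lambda_h u_{\mathcal T_h} - Vu_{\mathcal T_h} - \kappa u_{\mathcal T_h}^3$. First I would subtract the two defining relations to obtain, for all $\hat\varphi_h \in \hat U_h$, an identity of the form
\begin{equation*}
	a_h(\hat u_h - \hat w_h, \hat\varphi_h) = \big(\lambda_h u_{\mathcal T_h} - \lambda u - V(u_{\mathcal T_h}-u) - \kappa(u_{\mathcal T_h}^3 - u^3),\, \varphi_{\mathcal T_h}\big)_{L^2},
\end{equation*}
up to the normalisation perturbation already controlled in \Cref{lem:firstterm}. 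Testing with $\hat\varphi_h = \hat u_h - \hat w_h$ gives coercivity of the left-hand side (yielding $\|\nabla_h\mathcal R_h(\hat u_h-\hat w_h)\|_{L^2}^2 + |\hat u_h-\hat w_h|_{s_h}^2$ up to equivalence of $\|\cdot\|_{a_h}$), so everything reduces to estimating the right-hand side functional.

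The key step is to rewrite and bound the right-hand side data. I would regroup the linear-in-difference terms $\lambda_h u_{\mathcal T_h} - \lambda u = \lambda(u_{\mathcal T_h}-u) + (\lambda_h - \lambda)u_{\mathcal T_h}$ and $-V(u_{\mathcal T_h}-u)$, each of which contributes a factor $\|u_{\mathcal T_h}-u\|_{L^2}$ or $|\lambda_h-\lambda|$ against $\varphi_{\mathcal T_h}$. The decisive term is the cubic difference $u_{\mathcal T_h}^3 - u^3 = (u_{\mathcal T_h}-u)(u_{\mathcal T_h}^2 + u_{\mathcal T_h}u + u^2)$; using the factorisation and a H\"older splitting, the genuinely nonlinear contribution produces the term $\|(u_{\mathcal T_h}-u)^2\|_{L^3}$ appearing on the right-hand side, while the mixed lower-order pieces are absorbed into $\|u_{\mathcal T_h}-u\|_{L^2}$ times uniformly bounded Lebesgue norms of $u$ and $u_{\mathcal T_h}$ (the latter controlled by \Cref{prop:discrete:sobolev:embed} and the uniform $L^6$-boundedness from \Cref{theo:plain_conver}). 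To convert $L^2$-type data bounds against $\varphi_{\mathcal T_h}$ into the claimed $h$-weighted estimates, I would test against the difference and invoke a duality/Aubin--Nitsche argument together with the discrete Sobolev embedding, so that pairing with $\varphi_{\mathcal T_h}$ costs a factor $h$ on the smoother data terms, explaining the $h\|u_{\mathcal T_h}-u\|_{L^2}$, $h|\lambda_h-\lambda|$, and residual $h^{r+1}$ summands.

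Finally I would use \Cref{lem:firstterm} to replace $w_{\mathcal T_h}$ by $u$ wherever convenient (contributing the $h^{r+1}$ term) and to control $|\hat u_h|_{s_h}$ via the triangle inequality $|\hat u_h|_{s_h} \leq |\hat u_h-\hat w_h|_{s_h} + |\hat w_h|_{s_h}$, the last being $\lesssim h^{r+1}$ by \cref{eq:err1}. The $L^2$-error $\|u_{\mathcal T_h}-w_{\mathcal T_h}\|_{L^2}$ is then obtained either directly from the energy bound via the discrete Poincar\'e/Sobolev inequality or through a separate Aubin--Nitsche duality step applied to the difference equation. The main obstacle I anticipate is the careful treatment of the nonlinear cubic term: isolating exactly the quantity $\|(u_{\mathcal T_h}-u)^2\|_{L^3}$ while ensuring all remaining mixed terms are genuinely of lower order requires the uniform higher-integrability of $u_{\mathcal T_h}$ and a judicious H\"older exponent choice, and it is here that the non-conforming setting (where $u_{\mathcal T_h}$ is only a broken polynomial) forces reliance on the discrete Sobolev embedding of \Cref{prop:discrete:sobolev:embed} rather than a classical continuous embedding.
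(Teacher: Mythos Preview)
Your plan has a genuine gap at the most delicate point: obtaining the $h$-weighting on $\|u_{\mathcal T_h}-u\|_{L^2}$ and $|\lambda_h-\lambda|$. Testing the difference equation with $\hat\varphi_h=\hat u_h-\hat w_h$ and using $a_h$-coercivity only gives
\[
\|\hat y_h\|_{a_h}^2 = \big((\lambda-V)(u_{\mathcal T_h}-u),y_{\mathcal T_h}\big)_{L^2} + (\lambda_h-\lambda)(u_{\mathcal T_h},y_{\mathcal T_h})_{L^2} - \kappa(u_{\mathcal T_h}^3-u^3,y_{\mathcal T_h})_{L^2}+\cdots,
\]
and while the normalisation identity $(u_{\mathcal T_h},y_{\mathcal T_h})_{L^2}=\tfrac12\|y_{\mathcal T_h}\|_{L^2}^2$ renders the $(\lambda_h-\lambda)$-term absorbable, the remaining contributions contain terms that are \emph{linear} in $u_{\mathcal T_h}-u$ (e.g.\ $(\lambda-V)(u_{\mathcal T_h}-u)$ and the first-order piece of the cubic expansion). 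These produce $\|u_{\mathcal T_h}-u\|_{L^2}\|y_{\mathcal T_h}\|_{L^2}$ with no factor of $h$. An Aubin--Nitsche argument cannot manufacture the missing $h$: duality improves $\|y_{\mathcal T_h}\|_{L^2}$ relative to $\|\hat y_h\|_{a_h}$ by comparing to a continuous solution $\chi$, but $\|\chi\|_{L^2}$ itself is of the size of the data and is not $\mathcal O(h)$. Without the $h$-factor the bootstrap in \cref{theo:err_est} does not close.

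The paper's proof avoids this by working with the \emph{linearised} form $J_{u,\lambda}(\mathbf v,\mathbf w)=(\nabla_h\gamma,\nabla_h\tau)_{L^2}+((V+3\kappa u^2-\lambda)v,w)_{L^2}$ rather than $a_h$. Its coercivity (on the continuous level, from~\cite[Lem.~1]{CCM10}) is transferred to the discrete pair $\mathbf y_h=(y_{\mathcal T_h},\mathcal R_h\hat y_h)$ via an auxiliary Poisson problem $-\Delta\chi=g_h$ and the HHO estimate $\|y_{\mathcal T_h}-\chi\|_{L^2}+\|\nabla_h(\mathcal R_h\hat y_h-\chi)\|_{L^2}\lesssim h\|\chi\|_{H^2}$; the correction term $h^2\|\chi\|_{H^2}^2\lesssim h^2\|g_h\|_{L^2}^2$ is precisely the origin of $h\|u_{\mathcal T_h}-u\|_{L^2}$ and $h|\lambda_h-\lambda|$. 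Crucially, the $3\kappa u^2$ coefficient in $J_{u,\lambda}$ together with the continuous and discrete eigenvalue equations yields the exact algebraic identity $3u^2u_{\mathcal T_h}-u_{\mathcal T_h}^3-2u^3=-(u-u_{\mathcal T_h})^2(2u+u_{\mathcal T_h})$, so that \emph{all} linear-in-$(u_{\mathcal T_h}-u)$ contributions cancel, leaving only the quadratic remainder $\|(u_{\mathcal T_h}-u)^2\|_{L^3}$. To invoke the continuous equation with a discrete test function one needs the moment-preserving smoother $\mathcal J_h$ of \cref{prop:prese_inter}, which is absent from your outline; the resulting terms $(\nabla u,\nabla_h(\mathcal R_h\hat y_h-\mathcal J_h\hat y_h))_{L^2}$ and $(\Delta u,y_{\mathcal T_h}-\mathcal J_h\hat y_h)_{L^2}$ are what contribute the $h^{r+1}$ summand.
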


\begin{proof}
In this proof, we write pairs of functions in \( L^2(\Omega) \times H^1(\mathcal{T}_h) \) in Roman boldface letters, e.g., \( \mathbf{u} \), \( \mathbf{v} \), and \( \mathbf{w} \). Given the ground state \( u \) and the corresponding eigenvalue \( \lambda \), we define the bilinear form \( J_{u,\lambda} \) for any \( \mathbf{v} = (v, \gamma) \) and \( \mathbf{w} = (w, \tau) \) as
\begin{equation*}
	J_{u,\lambda}(\mathbf{v}, \mathbf{w}) \coloneqq (\nabla_h \gamma, \nabla_h \tau)_{\Omega} + (Vv, w)_{\Omega} + 3\kappa(|u|^2 v, w)_{\Omega} - \lambda (v, w)_{\Omega}.
\end{equation*}
Further, denote \( \mathbf{u}_h \coloneqq  (u_{\mathcal{T}_h}, \mathcal{R}_h \hat{u}_h) \) and \( \mathbf{w}_h \coloneqq (w_{\mathcal{T}_h}, \mathcal{R}_h \hat{w}_h) \), and abbreviate \( \mathbf{y}_h \coloneqq \mathbf{u}_h - \mathbf{w}_h \) and \( \hat{y}_h \coloneqq \hat{u}_h - \hat{w}_h \). The function \( \hat{y}_h \) can be seen as the HHO approximation to the weak solution of the auxiliary Poisson problem: seek \( \chi \in H^1_0(\Omega) \) such that
	\begin{equation*}
	-\Delta \chi = f_h-{f}/\|v_{\mathcal{T}_h}\|_{L^2} \eqqcolon g_h
	\end{equation*}
with \( f \) and \( f_h \) defined in \cref{eq:auxpoisson} and \cref{eq:ustar}, respectively. 
By classical elliptic regularity, \( \chi \in H^2(\Omega) \cap H_0^1(\Omega) \) with \( \|\chi\|_{H^2} \lesssim \|g_h\|_{L^2} \lesssim 1 \), since \( g_h \) is uniformly $L^2$-bounded. Applying classical HHO convergence results for the Poisson problem (see, e.g., \cite[Thm.~2.27 \& 2.28]{Pietro2020}) along with  \cref{prop:discrete:sobolev:embed}, we obtain that
\begin{equation}\label{eq:HHO:auxi_err}
	\|y_{\mathcal{T}_h} -\chi\|_{L^2} + \|\nabla_h(\mathcal R_h\hat{y}_h-\chi)\|_{L^2}\lesssim h\|\chi\|_{H^2}.
\end{equation}

To estimate the discrete errors associated with \( \hat{y}_h \), we use the identity
\[
\|y_{\mathcal{T}_h}\|_{L^2}^2 = \|\chi\|_{L^2}^2 + (y_{\mathcal{T}_h} - \chi, y_{\mathcal{T}_h} + \chi)_{L^2},
\]
and a similar one for the term \( \|\nabla_h \mathcal{R}_h \hat{y}_h\|_{L^2}^2 \). Applying \cref{eq:HHO:auxi_err} and the weighted Young's inequality with parameter \( \epsilon > 0 \), this leads to the estimate
\begin{align}
	\label{eq:esty}
	\begin{split}
			&\|y_{\mathcal{T}_h} \|_{L^2}^2 + \|\nabla_h\mathcal R_h\hat{y}_h\|_{L^2}^2\\
		&\quad\lesssim \|\chi\|_{L^2}^2 + \|\nabla\chi\|_{L^2}^2  + h\|\chi\|_{H^2}(h\|\chi\|_{H^2} + \|y_{\mathcal T_h}\|_{L^2} + \|\nabla_h \mathcal R_h \hat y_h\|_{L^2})\\
		&\quad \leq\|\chi\|_{L^2}^2 + \|\nabla\chi\|_{L^2}^2  +(1+\tfrac{1}{4\epsilon})h^2\|\chi\|_{H^2}^2 + \epsilon(\|y_{\mathcal T_h}\|_{L^2}^2 + \|\nabla_h \mathcal R_h \hat y_h\|_{L^2}^2).
	\end{split}
\end{align}
Choosing the parameter \( \epsilon \) sufficiently small (independently of \( h \)) allows the last term on the right-hand side to be absorbed into the left-hand side, yielding
\begin{align*}
	\|y_{\mathcal{T}_h} \|_{L^2}^2 + \|\nabla_h \mathcal{R}_h \hat{y}_h\|_{L^2}^2 
	\lesssim \|\chi\|_{L^2}^2 + \|\nabla \chi\|_{L^2}^2 + h^2 \|\chi\|_{H^2}^2 
	\lesssim J_{u,\lambda}(\mathbf{x}, \mathbf{x}) + h^2 \|\chi\|_{H^2}^2,
\end{align*}
where the second estimate follows from \cite[Lem.~1]{CCM10} with \( \mathbf{x} \coloneqq (\chi, \chi) \).
The referenced result applies only in the conforming setting, and thus for the pair~\( \mathbf{x} \), but not directly for its discrete counterpart \( \mathbf{y}_h \). To relate back to \( \mathbf{y}_h \), we invoke estimate~\cref{eq:HHO:auxi_err}, the \( L^{\infty} \)-bound for \( u \), and the weighted Young’s inequality with parameter \( \delta > 0 \). Proceeding similarly to  estimate~\cref{eq:esty}, we obtain
\begin{align*}
	|J_{u,\lambda}(\mathbf{y}_h, \mathbf{y}_h) - J_{u,\lambda}(\mathbf{x}, \mathbf{x})|
	\lesssim{} & \left(1 + \tfrac{1}{4\delta}\right) h^2 \|\chi\|_{H^2}^2 + \delta \left( \|y_{\mathcal{T}_h} \|_{L^2}^2 + \|\nabla_h \mathcal{R}_h \hat{y}_h\|_{L^2}^2 \right).
\end{align*}
Combining  this with the previous estimate and choosing \( \delta \) sufficiently small (independent of \( h \)) allows to absorb the last term into the left-hand side, yielding
\begin{align}\label{eq:err:auxi_1}
%	\begin{aligned}
	\begin{split}
		\|y_{\mathcal{T}_h} \|_{L^2}^2 + \|\nabla_h\mathcal R_h\hat{y}_h\|_{L^2}^2 &\lesssim  J_{u,\lambda}(\mathbf{y}_h,\mathbf{y}_h) + h^2 \|\chi\|_{H^2}^2\\
	&=  \underbrace{J_{u,\lambda}(\mathbf{u}_h-\mathbf{u},\mathbf{y}_h )}_{\eqqcolon \Xi_1}+ \underbrace{J_{u,\lambda}(\mathbf{u}-\mathbf{w}_h,\mathbf{y}_h)}_{\eqqcolon \Xi_2} + \underbrace{h^2 \|\chi\|_{H^2}^2}_{\eqqcolon \Xi_3},
		\end{split}
\end{align}
where we denote $ \mathbf{u} \coloneqq (u,u)$.

To prepare the estimate for \( \Xi_1 \), we observe the following identity:
\begin{align*}
	J_{u,\lambda}(\mathbf{u}_h,\mathbf{y}_h )  
%	& (\nabla_h\mathcal R_h\hat{u}_h,\nabla_h\mathcal R_h\hat{y}_h)_{L^2} + (Vu_{\mathcal{T}_h},y_{\mathcal{T}_h})_{L^2} + 3\kappa(u^2u_{\mathcal{T}_h},y_{\mathcal{T}_h})_{L^2} -\lambda(u_{\mathcal{T}_h},y_{\mathcal{T}_h})_{L^2}\\
%	=& (\nabla_h\mathcal R_h\hat{u}_h,\nabla_h\mathcal R_h\hat{y}_h)_{L^2} + (Vu_{\mathcal{T}_h},y_{\mathcal{T}_h})_{L^2} + \kappa(u_{\mathcal{T}_h}^3,y_{\mathcal{T}_h})_{L^2} + s_h(\hat{u}_h,\hat{y}_h)\\
%	& \ \ \ \ \ \  + \kappa(3u^2u_{\mathcal{T}_h} - u_{\mathcal{T}_h}^3,y_{\mathcal{T}_h})_{L^2} - \lambda(u_{\mathcal{T}_h},y_{\mathcal{T}_h})_{L^2} - s_h(\hat{u}_h,\hat{y}_h)\\
	 =  (\lambda_h -\lambda )(u_{\mathcal{T}_h},y_{\mathcal{T}_h})_{L^2} + \kappa(3u^2u_{\mathcal{T}_h} - u_{\mathcal{T}_h}^3,y_{\mathcal{T}_h})_{L^2}  - s_h(\hat{u}_h,\hat{y}_h),
\end{align*}
where we added and subtracted the terms \( s_h(\hat{u}_h, \hat{y}_h) \) and \( \kappa (u_{\mathcal{T}_h}^2, y_{\mathcal{T}_h})_{L^2} \), and applied~\cref{eq:eigen_pro:discrete} with the test function \( \hat{y}_h \). Furthermore, we have the identity
\begin{align*}
	J_{u,\lambda}(\mathbf{u},\mathbf{y}_h ) 
%	& (\nabla u, \nabla_h\mathcal R_h\hat{y}_h)_{L^2} + (Vu,y_{\mathcal{T}_h})_{L^2} + 3\kappa(u^3,y_{\mathcal{T}_h})_{L^2} -\lambda(u,y_{\mathcal{T}_h})_{L^2}
%	 = & (\nabla u, \nabla \mathcal J_h\hat{y}_h)_{L^2} + (Vu,\mathcal J_h \hat{y}_h)_{L^2} + \kappa(u^3,\mathcal J_h \hat{y}_h)_{L^2} \\ 
%	 & \ \ \ \ \ \ + (\nabla u, \nabla_h (\mathcal R_h\hat{y}_h - \mathcal J_h\hat{y}_h))_{L^2} + (Vu+\kappa u^3,y_{\mathcal{T}_h}- \mathcal J_h\hat{y}_h)_{L^2} \\
%	 & \ \ \ \ \ \ + \kappa(2u^3,y_{\mathcal{T}_h})_{L^2} -\lambda(u,y_{\mathcal{T}_h})_{L^2}\\
%	 = &(\nabla u, \nabla_h (\mathcal R_h\hat{y}_h - \mathcal J_h\hat{y}_h))_{L^2} + (Vu+\kappa u^3-\lambda u,y_{\mathcal{T}_h}- \mathcal J_h\hat{y}_h)_{L^2} + \kappa(2u^3,y_{\mathcal{T}_h})_{L^2}\\
	 = (\nabla u, \nabla_h (\mathcal R_h\hat{y}_h - \mathcal J_h\hat{y}_h))_{L^2} + (\Delta u,y_{\mathcal{T}_h}- \mathcal J_h\hat{y}_h)_{L^2} + \kappa(2u^3,y_{\mathcal{T}_h})_{L^2},
\end{align*}
where we added and subtracted terms involving the moment-preserving smoother \( \mathcal{J}_h \colon \hat{U}_h \to H^1_0(\Omega) \) from  \cref{prop:prese_inter}, allowing us to apply \cref{eq:eigen_pro:conti} with \( \mathcal{J}_h \hat{y}_h \) as test function, and we used the identity \( Vu + \kappa u^3 - \lambda u = \Delta u \).
Combining the two identities above and noting that
\[
(u_{\mathcal{T}_h}, y_{\mathcal{T}_h})_{L^2} = \tfrac{1}{2} \|u_{\mathcal{T}_h} - w_{\mathcal{T}_h}\|_{L^2}^2, \quad
s_h(\hat{u}_h, \hat{y}_h) = \tfrac{1}{2} \big( |\hat{u}_h|_{s_h}^2 + |\hat{u}_h - \hat{w}_h|_{s_h}^2 - |\hat{w}_h|_{s_h}^2 \big),
\]
where the first relation follows from the fact that \( \|u_{\mathcal{T}_h}\|_{L^2} = \|w_{\mathcal{T}_h}\|_{L^2} \), yields that
\begin{multline*}
	\Xi_1 
%	& J_{u,\lambda}(\mathbf{u}_h,\mathbf{y}_h ) - J_{u,\lambda}(\mathbf{u},\mathbf{y}_h ) \\
%	= &(\lambda_h-\lambda)(u_{\mathcal{T}_h},y_{\mathcal{T}_h})_{L^2} + \kappa(3u^2u_{\mathcal{T}_h} - u_{\mathcal{T}_h}^3-2u^3,y_{\mathcal{T}_h})_{L^2} - s_h(\hat{u}_h,\hat{y}_h)\\
%	& \ \ \ \ \ \ -(\nabla u, \nabla_h (\mathcal R_h\hat{y}_h - \mathcal J_h\hat{y}_h))_{L^2} - (\Delta u,y_{\mathcal{T}_h}- \mathcal J_h\hat{y}_h)_{L^2}\\
	=  -\kappa((u-u_{\mathcal{T}_h})^2(2u+u_{\mathcal{T}_h}),y_{\mathcal{T}_h})_{L^2} - \tfrac{1}{2}(|\hat{u}_h|_{s_h}^2 + |\hat{y}_h|_{s_h}^2 - |\hat{w}_h|_{s_h}^2) \\ +\tfrac{1}{2}(\lambda_h-\lambda)\|y_{\mathcal{T}_h}\|_{L^2}^2-(\nabla u, \nabla_h (\mathcal R_h\hat{y}_h - \mathcal J_h\hat{y}_h))_{L^2} - (\Delta u,y_{\mathcal{T}_h}- \mathcal J_h\hat{y}_h)_{L^2}.
\end{multline*}

Next, we reformulate inequality \cref{eq:err:auxi_1} with the help of the latter identity, where we move the terms $|\hat{u}_h|_{s_h}^2$ and $|\hat{y}_h|_{s_h}^2$ to the left-hand side. This gives that
\begin{align}\label{eq:err:auxi_2}
		&\|y_{\mathcal{T}_h}\|_{L^2}^2 + \|\nabla_h\mathcal R_h\hat{y}_h\|_{L^2}^2 + |\hat{u}_h|_{s_h}^2 + |\hat{y}_h|_{s_h}^2\notag\\
		& \   \ \ \lesssim  |((u-u_{\mathcal{T}_h})^2(2u+u_{\mathcal{T}_h}),y_{\mathcal{T}_h})_{L^2}| + |\hat{w}_h|_{s_h}^2 + |\lambda_h-\lambda|\|y_{\mathcal{T}_h}\|_{L^2}^2\\
		& \ \  \ \ \  \ \  \ +|(\nabla u, \nabla_h (\mathcal R_h\hat{y}_h - \mathcal J_h\hat{y}_h))_{L^2}| +  |(\Delta u,y_{\mathcal{T}_h}- \mathcal J_h\hat{y}_h)_{L^2}| +|\Xi_2| + |\Xi_3|.\notag
\end{align}
	In the following, we estimate the terms on the right-hand side individually. For the first term, the uniform \( L^6 \)-boundedness of both \( u \) and \( u_{\mathcal{T}_h} \) implies that
\begin{equation*}\label{eq:err_est_auxi_3}
	|((u-u_{\mathcal{T}_h})^2(2u+u_{\mathcal{T}_h}),y_{\mathcal{T}_h})_{L^2}| \lesssim \|(u-u_{\mathcal{T}_h})^2\|_{L^3}\|y_{\mathcal{T}_h}\|_{L^2}.
\end{equation*}
The second term on the right-hand side of \cref{eq:err:auxi_2} is estimated using  \cref{lem:firstterm}, while the third term can be absorbed into the left-hand side for \( h \) sufficiently small.

Using the properties of \( \mathcal{J}_h \) from \cref{prop:prese_inter}, we estimate the next two terms as
\begin{multline*}\label{eq:err_est_auxi_1}
%	\begin{aligned}
		|(\nabla u, \nabla_h (\mathcal R_h\hat{y}_h - \mathcal J_h\hat{y}_h))_{L^2}| 
		= | (\nabla_h (u-\mathcal G_h u), \nabla_h \mathcal J_h\hat{y}_h)_{L^2}|
		\lesssim h^{r+1} \|\hat{y}_h\|_{a_h},\\
			|(\Delta u,y_{\mathcal{T}_h}- \mathcal J_h\hat{y}_h)_{L^2}| 
		=  |(h(\mathtt{id}-\Pi_{\mathcal T_h}^{k+1})\Delta u,h^{-1}(y_{\mathcal{T}_h}- \mathcal J_h\hat{y}_h))_{L^1}|
		\lesssim h^{r+1} \|\hat{y}_h\|_{a_h},
%	\end{aligned}
\end{multline*}
where, for the latter estimate, we used that \( u \in H^{r+2}(\mathcal{T}_h) \), which implies that \( \Delta u \in H^r(\mathcal{T}_h) \), in combination with the estimate
\begin{equation*}
	\|h^{-1}(y_{\mathcal{T}_h}- \mathcal J_h\hat{y}_h)\|_{L^2}  = \|h^{-1}(
	\mathtt{id}-\Pi_{\mathcal T_h}^{k+1})\mathcal J_h\hat{y}_h\|_{L^2} \lesssim \|
\hat{y}_h\|_{a_h},
\end{equation*}
by the approximation properties of $\Pi_{\mathcal{T}_h}^{k+1}$.

Therefore, it only remains to estimate $\Xi_2$ and $\Xi_3$. 
From the continuity of the bilinear form $J_{u,\lambda}$ and \cref{lem:firstterm}, we obtain for $\Xi_2$ that
\begin{equation*}\label{eq:err_est_auxi_5}
	|\Xi_2|\lesssim h^{r+1}(\|y_{\mathcal{T}_h} \|_{L^2} + \|\nabla_h\mathcal R_h\hat{y}_h\|_{L^2}).
\end{equation*}
Finally, the term \( \Xi_3 \) can be estimated as
\begin{equation*}\label{eq:err_est_auxi_6}
		|\Xi_3| 
		\lesssim h^2 \| f_h - {f}\big/{\|v_{\mathcal{T}_h}\|_{L^2}} \|_{L^2}^2
		\lesssim h^2 \|u_{\mathcal{T}_h} - u\|_{L^2}^2 + h^2 |\lambda_h - \lambda|^2 + h^{2r+4},
\end{equation*}
where \( f \) and \( f_h \) are defined in \cref{eq:auxpoisson} and \cref{eq:ustar}, respectively. Here, we used that \( |1 - \|v_{\mathcal{T}_h}\|_{L^2}| \lesssim h^{r+1} \), as shown in the proof of  \cref{lem:firstterm}, and that both~\( u \) and~\( u_{\mathcal{T}_h} \) are uniformly \( L^4 \)-bounded.

Combining the above estimates, we obtain
\begin{align*}
	&\|y_{\mathcal{T}_h}\|_{L^2}^2 + \|\nabla_h\mathcal R_h\hat{y}_h\|_{L^2}^2 + |\hat{u}_h|_{s_h}^2 + |\hat{y}_h|_{s_h}^2\\
	& \ \ \ \ \ \ \lesssim 
	%|\lambda_h-\lambda|\|y_{\mathcal{T}_h}\|_{L^2}^2 +
	 h^{r+1}(\|\hat{y}_h\|_{a_h}  +\|y_{\mathcal{T}_h} \|_{L^2}) + \|(u-u_{\mathcal{T}_h})^2\|_{L^3}\|y_{\mathcal{T}_h}\|_{L^2}\\
	 &\ \ \ \ \ \ \ \ \ \ \ \  + h^2\|u_{\mathcal{T}_h}-u\|_{L^2}^2 + h^2|\lambda_h-\lambda|^2 + h^{2r+2} + h^{2r+4},
\end{align*}
and the assertion can be concluded using the weighted Young's inequality.
\end{proof}

The following theorem gives a convergence result for the ground state, energy, and eigenvalue approximations of the proposed HHO method.
\begin{theorem}[A priori error estimate]\label{theo:err_est}
Assume that \( u \in H^{r+2}(\mathcal{T}_h) \) for some \( 0 \leq r \leq k \). Then, the following approximation results hold for the ground state:
	\begin{equation}\label{eq:energy:err_est}
		\|u_{\mathcal{T}_h} - u\|_{L^2} + \|\nabla_h(\mathcal R_h\hat{u}_h-u)\|_{L^2} + |\hat{u}_h|_{s_h} + |\hat{u}_h-\mathcal I_hu|_{s_h}\lesssim h^{r+1}.
	\end{equation}
The eigenvalue and energy approximations further satisfy
\begin{equation}
	\label{eq:energylamerr}
	|\lambda - \lambda_h| \lesssim h^{r+1}, \quad |E - E_h| \lesssim h^{2r+ 2}.
\end{equation}
\end{theorem}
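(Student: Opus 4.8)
\emph{Strategy and the ground-state estimate.} The plan is to prove the ground-state bound~\eqref{eq:energy:err_est} first and then read off the eigenvalue and energy rates. I introduce the $L^2$-normalised HHO approximation $\hat{w}_h$ of the auxiliary Poisson problem~\eqref{eq:auxpoisson} and split every error contribution through $\hat{w}_h$ by the triangle inequality: \cref{lem:firstterm} controls the distance between $u$ and $\hat{w}_h$ at rate $h^{r+1}$, while \cref{lem:err_est} controls the distance between $\hat{w}_h$ and $\hat{u}_h$. Writing $X$ for the left-hand side of \cref{lem:err_est}, the only non-routine term on its right-hand side is the nonlinear contribution $\|(u_{\mathcal T_h}-u)^2\|_{L^3}=\|u_{\mathcal T_h}-u\|_{L^6}^2$. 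I bound it through the discrete Sobolev embedding \cref{prop:discrete:sobolev:embed} applied to $\hat{u}_h-\hat{w}_h$, together with the approximation properties of $\Pi^{k+1}_{\mathcal T_h}$, obtaining $\|u_{\mathcal T_h}-u\|_{L^6}^2\lesssim \|\hat{u}_h-\hat{w}_h\|_{a_h}^2+h^{2r+2}\lesssim X^2+h^{2r+2}$.

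\emph{Closing by absorption.} This turns \cref{lem:err_est} into $X\lesssim X^2+h\|u_{\mathcal T_h}-u\|_{L^2}+h|\lambda_h-\lambda|+h^{r+1}$. Plain convergence (\cref{theo:plain_conver}) gives $X\to 0$, so the quadratic term is absorbed for $h$ small, and $h\|u_{\mathcal T_h}-u\|_{L^2}\lesssim hX+h^{r+2}$ is absorbed too. For the eigenvalue term I use the crude bound $|\lambda_h-\lambda|\lesssim X+h^{r+1}$, which follows from the identities $\lambda_\bullet=2E_\bullet+\tfrac{\kappa}{2}\|\cdot\|_{L^4}^4$ after estimating each contribution of $E_h-E$ and of the $L^4$-difference by $\|u_{\mathcal T_h}-u\|_{L^2}$, $\|\nabla_h(\mathcal R_h\hat{u}_h-u)\|_{L^2}$ and $|\hat{u}_h|_{s_h}$, each of which is $\lesssim X+h^{r+1}$ by the triangle inequality and \cref{lem:firstterm}. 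Absorbing the resulting $hX$ yields $X\lesssim h^{r+1}$, hence~\eqref{eq:energy:err_est}, and simultaneously the eigenvalue estimate $|\lambda-\lambda_h|\lesssim h^{r+1}$, whose rate is dictated by the $L^4$-difference, which is only linear in the $L^2$-error.

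\emph{Energy: reduction to a consistency term.} For the energy I expand $2(E_h-E)$ about $u$. Setting $e:=u_{\mathcal T_h}-u$ and $\hat{e}_h:=\hat{u}_h-\mathcal I_h u$, the quadratic and higher-order contributions are $O(h^{2r+2})$ by~\eqref{eq:energy:err_est} and the $L^\infty$-bound on $u$; the linear contributions collapse, using the Euler--Lagrange identity $Vu+\kappa u^3=\Delta u+\lambda u$ and the normalisation relation $\lambda(u,e)_{L^2}=-\tfrac{\lambda}{2}\|e\|_{L^2}^2$, to $2(E_h-E)=2R+O(h^{2r+2})$ with $R:=(\nabla u,\nabla_h(\mathcal R_h\hat{u}_h-u))_{L^2}+(\Delta u,e)_{L^2}$. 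The decisive regularity input is that $\Delta u+\lambda u=Vu+\kappa u^3\in H^r(\mathcal T_h)$ even though $V$ is merely bounded, which renders the combined potential/nonlinear mismatch double-rate. Invoking the reconstruction identity~\eqref{eq:reconstruction}, the elliptic-projection orthogonality behind~\eqref{eq:keyidentity}, and elementwise integration by parts, I rewrite $R$ entirely in terms of the discrete error components $e_{\mathcal T}$ (bulk) and $e_{\partial T}$ (face) of $\hat{e}_h$ as $R=\sum_T(e_{\mathcal T}-e_{\partial T},\,\partial_n(u-\mathcal G_h u))_{L^2(\partial T)}+O(h^{2r+2})$, where the single-valuedness of the face unknowns forces $\sum_T(e_{\partial T},\partial_n u)_{L^2(\partial T)}=0$.

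\emph{The main obstacle.} The hard part is showing $R=O(h^{2r+2})$, i.e.\ the doubled rate for the energy. A direct bound of the consistency term only yields $O(h^{r+1})$, because $\partial_n u$ carries an $O(h^{-1/2})$-scaled trace; the extra order is recovered by pairing the HHO-consistency factor $e_{\mathcal T}-e_{\partial T}$, whose weighted sum satisfies $\sum_T h_T^{-1}\|e_{\mathcal T}-e_{\partial T}\|_{L^2(\partial T)}^2\lesssim \|\hat{e}_h\|_{a_h}^2\lesssim h^{2r+2}$ by the discrete norm equivalence underlying the stabilisation, against the normal-derivative jump of the elliptic projection of the \emph{smooth} solution, for which $\sum_T h_T\|\partial_n(u-\mathcal G_h u)\|_{L^2(\partial T)}^2\lesssim h^{2r+2}$. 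A Cauchy--Schwarz estimate then gives $R\lesssim h^{r+1}\cdot h^{r+1}=h^{2r+2}$, whence $|E-E_h|\lesssim h^{2r+2}$. Orchestrating these cancellations cleanly---merging the potential and nonlinear terms through the regularity bootstrap, converting bulk mismatches into boundary terms via elliptic orthogonality, and matching the bulk/face stabilisation weights to the normal-derivative jumps---is the technically most demanding step, whereas the eigenvalue rate $h^{r+1}$ already follows from the second paragraph.
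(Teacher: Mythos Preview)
Your argument for the ground-state bound \eqref{eq:energy:err_est} and the eigenvalue rate is essentially the paper's: both split through the normalised auxiliary HHO solution $\hat{w}_h$, invoke \cref{lem:firstterm,lem:err_est}, control the nonlinear $L^6$-term via the discrete Sobolev embedding, and absorb the resulting quadratic contributions using plain convergence. The only cosmetic difference is that you route the $L^6$-estimate through $\hat{w}_h$ rather than through $\mathcal{I}_h u$ as in the paper's \eqref{eq:squareterm}.

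For the energy estimate you take a genuinely different route. The paper expands $E-E_h$ and handles the cross-term remainder by testing the \emph{discrete} Euler--Lagrange equation \eqref{eq:eigen_pro:discrete} with $\mathcal{I}_h u-\hat{u}_h$; this produces the residual $(u-\Pi^{k+1}_{\mathcal{T}_h}u,\,Vu_{\mathcal{T}_h}+\kappa u_{\mathcal{T}_h}^3)_{L^2}$, which is then reduced to order $h^{2r+2}$ via the orthogonality of $\Pi^{k+1}_{\mathcal{T}_h}$ and the regularity $\Delta u\in H^r(\mathcal{T}_h)$. You instead integrate by parts at the continuous level, use the elliptic-projection orthogonality to kill the bulk gradient pairing $(\nabla e_T,\nabla(u-\mathcal{G}_h u))_T$, and collapse the remainder to the face consistency term $\sum_T(e_T-e_{\partial T},\,\partial_n(u-\mathcal{G}_h u))_{\partial T}$, which you close by a weighted Cauchy--Schwarz argument pairing the jump seminorm against the normal-derivative defect of the elliptic projection. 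Both arguments are correct and of comparable length; the paper's stays within the $L^2$-projection calculus, while yours is closer in spirit to the standard HHO consistency-error representation from the linear theory. One minor caveat: the discrete norm equivalence $\sum_T h_T^{-1}\|v_T-v_{\partial T}\|_{L^2(\partial T)}^2 \lesssim \|\hat{v}_h\|_{a_h}^2$ that you rely on is standard for generic HHO stabilisations, but you should cite or verify it for the specific stabilisation $s_h$ of \cite{Tran2024} used here (the paper implicitly uses this equivalence in \cref{prop:discrete:sobolev:embed} via \cite[Lem.~2.18]{Pietro2020}).
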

\begin{proof}
	Using the triangle inequality and  \cref{lem:err_est,lem:firstterm}, we obtain
	\begin{align}
		\label{eq:estapriori}
		\begin{split}
				&\|u_{\mathcal{T}_h} - u\|_{L^2} + \|\nabla_h(\mathcal R_h\hat{u}_h-u)\|_{L^2} + |\hat{u}_h|_{s_h} + |\hat{u}_h-\mathcal I_hu|_{s_h}\\
			& \ \ \ \ \ \ \lesssim h^{r+1} + \|(u-u_{\mathcal{T}_h})^2\|_{L^3} + h\|u_{\mathcal{T}_h}-u\|_{L^2} + h|\lambda_h-\lambda|.
		\end{split}
		\end{align}
In the following, we consider the terms on the right-hand side of the latter inequality individually. For the second term, we obtain with the triangle inequality that
\begin{align}
	\label{eq:trie}
	\|(u-u_{\mathcal{T}_h})^2\|_{L^3} \lesssim &
	  \|u-\Pi_{\mathcal T_h}^{k+1}u\|_{L^6}^2 + \|\Pi_{\mathcal T_h}^{k+1}u - u_{\mathcal{T}_h}\|_{L^6}^2,
\end{align}
where the first term can estimated using classical approximation results for the $L^2$-projection $\Pi_{\mathcal T_h}^{k+1}$ in the $L^6$-norm, as well as the (broken) Sobolev embedding $W^{r+1,6}(\mathcal T_h) \hookrightarrow H^{r+2}(\mathcal T_h)$ to obtain that
\begin{align*}
\|u-\Pi_{\mathcal T_h}^{k+1}u\|_{L^6} \lesssim h^{r+1}|u|_{W^{r+1,6}(\mathcal T_h)} \lesssim h^{r+1}\|u\|_{H^{r+2}(\mathcal T_h)}
\end{align*}
For the second term on the right-hand side of \cref{eq:trie}, we use the discrete Sobolev embedding from \cref{prop:discrete:sobolev:embed}, the triangle inequality, \cref{eq:keyidentity}, and the approximation properties of the Galerkin projection to get that
\begin{align*}
	\|\Pi_{\mathcal T_h}^{k+1}u - u_{\mathcal{T}_h}\|_{L^6}^2&\lesssim   \|\hat{u}_h -\mathcal I_hu\|_{a_h}^2\\
	&  \lesssim   \|\nabla_h(\mathcal R_h\hat{u}_h-u)\|_{L^2}^2 + \|\nabla_h(u-\mathcal G_hu)\|_{L^2}^2 + |\hat{u}_h-\mathcal I_hu|_{s_h}^2\\
	&\lesssim h^{2r+2} + \|\nabla_h(\mathcal R_h\hat{u}_h-u)\|_{L^2}^2 + |\hat{u}_h-\mathcal I_hu|_{s_h}^2.
\end{align*}
Using the latter two estimates we can continue \cref{eq:trie} as
\begin{equation}
	\label{eq:squareterm}
		\|(u-u_{\mathcal{T}_h})^2\|_{L^3} \lesssim h^{2r+2} + \|\nabla_h(\mathcal R_h\hat{u}_h-u)\|_{L^2}^2 + |\hat{u}_h-\mathcal I_hu|_{s_h}^2.
\end{equation}
 Thanks to the plain convergence result of \cref{theo:plain_conver}, $\|(u-u_{\mathcal{T}_h})^2\|_{L^3} $ is a higher-order term, and can thus be absorbed in the left-hand side for sufficiently small $h>0$. 
Note that, similarly, also the third term on the right-hand side of \cref{eq:estapriori} can be absorbed into the left-hand side, for $h > 0$ sufficiently small.

To estimate the fourth term on the right-hand side of \cref{eq:estapriori}, we note that
\begin{align}
	\label{eq:lamerr}
	|\lambda-\lambda_h| 
	\lesssim  \|u_{\mathcal{T}_h} - u\|_{L^2} + \|\nabla_h(\mathcal R_h\hat{u}_h-u)\|_{L^2} + |\hat{u}_h|_{s_h}^2,
\end{align}
where we have used that  $\|\hat{u}_h\|_{a_h}$ and $\|u_{\mathcal{T}_h}\|_{L^6}$ are uniformly bounded. Therefore, this term is also of higher order and can be absorbed into the left-hand term for sufficiently
small $h>0$. 
Convergence result \cref{eq:energy:err_est} follows from combining the above estimates. The first estimate in \cref{eq:energylamerr}, which is the desired eigenvalue approximation result,  follows by combining \cref{eq:lamerr,eq:energy:err_est}.

Finally, for proving the desired energy approximation result, we note that
\begin{align*}
	E-E_h = 
%	& \tfrac{1}{2}\|\nabla u\|_{L_2}^2 + \tfrac{1}{2}\|V^{1/2}u\|_{L^2}^2 + \tfrac{1}{4}\kappa\|u\|_{L^4}^4\\
%	   & \ \ \ \ \ \ -\tfrac{1}{2}\|\nabla_h\mathcal R_h\hat{u}_h\|_{L^2}^2 - \tfrac{1}{2}\|V^{1/2}u_{\mathcal{T}_h}\|_{L^2}^2 -\tfrac{1}{4}\kappa\|u_{\mathcal{T}_h}\|_{L^4}^4 -\tfrac{1}{2}|\hat{u}_h|_{s_h}^2\\
	  \tfrac{1}{2}\|\nabla_h(u-\mathcal R_h\hat{u}_h)\|_{L^2}^2 + \tfrac{1}{2}\|V^{1/2}(u-u_{\mathcal{T}_h})\|_{L^2}^2- \tfrac{1}{2}|\hat{u}_h|_{s_h}^2 + R,
\end{align*}
where the first and second terms on the right-hand side are of order $2r+2$, using~\cref{eq:energy:err_est}. In what follows, we will also show that the remainder $R$, defined as
\begin{align*}
	R \coloneqq (\nabla_h(u-\mathcal R_h\hat{u}_h), \nabla_h\mathcal R_h\hat{u}_h)_{L^2} + ( V(u- u_{\mathcal{T}_h}),u_{\mathcal{T}_h})_{L^2} + \tfrac{1}{4}\kappa(\|u\|_{L^4}^4 - \|u_{\mathcal{T}_h}\|_{L^4}^4) 
\end{align*}
is of order $2r+2$. 
To this end, we first rewrite the remainder $R$, using \cref{eq:eigen_pro:discrete} with the test function $\mathcal I_h u - \hat u_h$,  the identity $( \Pi_{\mathcal T_h}^{k+1}u- u_{\mathcal{T}_h},u_{\mathcal{T}_h})_{L^2} 
% = (u- u_{\mathcal{T}_h},u_{\mathcal{T}_h})_{L^2}
   = -\tfrac{1}{2}\|u-u_{\mathcal{T}_h}\|_{L^2}^2$, and some further algebraic manipulations, to obtain that
\begin{align*}
	R 
%	= &(\nabla_h(u-\mathcal R_h\hat{u}_h), \nabla_h\mathcal R_h\hat{u}_h)_{L^2} + ( V(u- u_{\mathcal{T}_h}),u_{\mathcal{T}_h})_{L^2} + \tfrac{1}{4}\kappa(\|u\|_{L^4}^4 - \|u_{\mathcal{T}_h}\|_{L^4}^4) \\
%	= & (\nabla_h(\mathcal G_hu-\mathcal R_h\hat{u}_h), \nabla_h\mathcal R_h\hat{u}_h)_{L^2} + ( V(\Pi_{\mathcal T_h}^{k+1}u- u_{\mathcal{T}_h}),u_{\mathcal{T}_h})_{L^2} +  (V(u - \Pi_{\mathcal T_h}^{k+1}u),u_{\mathcal{T}_h})_{L^2}\\
%	& \ \ \ \ \ \ \ \ + \tfrac{1}{4}\kappa(\|u\|_{L^4}^4 - \|u_{\mathcal{T}_h}\|_{L^4}^4)\\
%	=  &(\nabla_h(\mathcal R_h \mathcal{I}_hu-\mathcal R_h\hat{u}_h), \nabla_h\mathcal R_h\hat{u}_h)_{L^2} + ( V(\Pi_{\mathcal T_h}^{k+1}u- u_{\mathcal{T}_h}),u_{\mathcal{T}_h})_{L^2} + \kappa (u_{\mathcal{T}_h}^3, \Pi_{\mathcal T_h}^{k+1}u- u_{\mathcal{T}_h})_{L^2} \\
%	& \ \ \ \ \ \ \ \   + s_h(\hat{u}_h,\mathcal I_hu-\hat{u}_h) - \kappa (u_{\mathcal{T}_h}^3, \Pi_{\mathcal T_h}^{k+1}u- u_{\mathcal{T}_h})_{L^2} - s_h(\hat{u}_h,\mathcal I_hu-\hat{u}_h)\\
%	& \ \ \ \ \ \ \ \   +  (V(u -\Pi_{\mathcal T_h}^{k+1}u),u_{\mathcal{T}_h})_{L^2}+ \tfrac{1}{4}\kappa(\|u\|_{L^4}^4 - \|u_{\mathcal{T}_h}\|_{L^4}^4)\\
	 &=  \lambda_h ( \Pi_{\mathcal T_h}^{k+1}u- u_{\mathcal{T}_h},u_{\mathcal{T}_h})_{L^2} - \kappa(u_{\mathcal{T}_h}^3, \Pi_{\mathcal T_h}^{k+1}u- u_{\mathcal{T}_h})_{L^2} + \tfrac{1}{4}\kappa(\|u\|_{L^4}^4 - \|u_{\mathcal{T}_h}\|_{L^4}^4) \\ & \ \ \ \ \ \ \ \ -s_h(\hat{u}_h,\mathcal I_hu-\hat{u}_h) +  (V(u - \Pi_{\mathcal T_h}^{k+1}u),u_{\mathcal{T}_h})_{L^2}\\
%	 = & -\tfrac{1}{2}\lambda_h\|u - u_{\mathcal{T}_h}\|_{L^2}^2 - \kappa(u_{\mathcal{T}_h}^3, \Pi_{\mathcal T_h}^{k+1}u- u)_{L^2} + \tfrac{1}{4}\kappa(\|u\|_{L^4}^4 - \|u_{\mathcal{T}_h}\|_{L^4}^4 + 4(u_{\mathcal{T}_h}^3,  u_{\mathcal{T}_h}-u)_{L^2}) \\ & \ \ \ \ \ \ \ \ -s_h(\hat{u}_h,\mathcal I_hu-\hat{u}_h) +  (V(u - \Pi_{\mathcal T_h}^{k+1}u),u_{\mathcal{T}_h})_{L^2}\\
	 & = -\tfrac{1}{2}\lambda_h\|u - u_{\mathcal{T}_h}\|_{L^2}^2  +  \tfrac{\kappa}{4}((u_{\mathcal{T}_h}-u)^2,3u_{\mathcal{T}_h}^2 + 2u_{\mathcal{T}_h}u +u^2)_{L^2} \\ 
	 & \ \ \ \ \ \ \ \ -s_h(\hat{u}_h,\mathcal I_hu-\hat{u}_h) +  (u - \Pi_{\mathcal T_h}^{k+1}u,Vu_{\mathcal{T}_h} + \kappa u_{\mathcal{T}_h}^3)_{L^2}.
\end{align*}
Using \cref{lem:err_est} and \cref{eq:energy:err_est}, it directly follows that all terms except the last one are of order \(2r+2\). To show that also the last term is of this order, we observe that
\begin{align}
	&(\kappa u_{\mathcal{T}_h}^3 +Vu_{\mathcal{T}_h}, u-\Pi_{\mathcal T_h}^{k+1}u)_{L^2}\notag\\ 
	 &\quad= (\Delta u + \lambda u,  u-\Pi_{\mathcal T_h}^{k+1}u)_{L^2} + (\kappa u_{\mathcal{T}_h}^3 +Vu_{\mathcal{T}_h}-\kappa u^3 -Vu, u-\Pi_{\mathcal T_h}^{k+1}u)_{L^2}\label{eq:2rp2},
\end{align}
where we have used \cref{eq:auxpoisson}. 
Since, by assumption, \(u \in H^{r+2}(\mathcal{T}_h)\), it follows that \(\Delta u \in H^r(\mathcal{T}_h)\), allowing us to apply the orthogonality and approximation properties of \(\Pi^{k+1}_{\mathcal{T}_h}\) to show that the first term is of order \(2r+2\).
A similar bound for the second term is obtained directly from \cref{eq:energy:err_est}.
 Consequently, all terms in \cref{eq:2rp2} are of order \(2r+2\), and the energy approximation estimate in \cref{eq:energylamerr} follows.
\end{proof}

The $L^2$- and eigenvalue approximation estimates stated in \cref{eq:energylamerr,eq:energy:err_est} can be improved as outlined in the following theorem.

\begin{theorem}[Improved error estimate]\label{theo:l2errorest}
	
	Assume that \( u \in H^{r+2}(\mathcal{T}_h) \) for some \( 0 \leq r \leq k \). Then, the following $L^2$-approximation results hold for the ground state:	
	\begin{equation}
		\label{eq:improvedL2errest}
		\|u_{\mathcal T_h}-u\|_{L^2} + \|\mathcal R_h\hat{u}_h-u\|_{L^2} \lesssim h^{r+2}.
	\end{equation}
	Furthermore, we can improve the eigenvalue approximation estimate to
	\begin{equation}
		\label{eq:improvedev}
		|\lambda-\lambda_h|\lesssim h^{r+2}.
	\end{equation}
\end{theorem}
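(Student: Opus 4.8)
The plan is to improve the $L^2$-estimates via an Aubin--Nitsche duality argument, adapted to the present nonconforming HHO setting. The key idea is that the first-order estimate from \cref{theo:err_est} gives $\|u_{\mathcal T_h}-u\|_{L^2} \lesssim h^{r+1}$, and the energy-type quantities converge at the doubled rate $h^{2r+2}$; the $L^2$-error should inherit an extra power of $h$ through a duality argument, yielding $h^{r+2}$. First I would introduce the dual (adjoint) problem associated with the linearised operator $J_{u,\lambda}$ from \cref{lem:err_est}: seek $z \in H^1_0(\Omega)$ solving $-\Delta z + Vz + 3\kappa|u|^2 z - \lambda z = u_{\mathcal T_h}-u$ (or the appropriate normalised variant accounting for the constraint $\|u_{\mathcal T_h}\|_{L^2}=1$). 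Since $\Omega$ is convex and the right-hand side is in $L^2$, elliptic regularity gives $z \in H^2(\Omega)\cap H^1_0(\Omega)$ with $\|z\|_{H^2} \lesssim \|u_{\mathcal T_h}-u\|_{L^2}$.

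Next I would test the error equation with (an HHO interpolation or smoothing of) the dual solution $z$. Writing $\|u_{\mathcal T_h}-u\|_{L^2}^2 = (u_{\mathcal T_h}-u,\, -\Delta z + Vz + 3\kappa|u|^2 z - \lambda z)_{L^2}$, I would integrate by parts and substitute the discrete and continuous eigenvalue equations \cref{eq:eigen_pro:discrete} and \cref{eq:eigen_pro:conti}, playing the same game as in the proof of \cref{lem:err_est}: add and subtract the stabilisation term $s_h$, the moment-preserving smoother $\mathcal J_h$ from \cref{prop:prese_inter}, and the elliptic projection $\mathcal G_h$ (using the identity \cref{eq:keyidentity}). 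This produces, on the right-hand side, a collection of terms each of which pairs a first-order error quantity (already controlled by $h^{r+1}$ via \cref{theo:err_est}) against an approximation error of $z$ (controlled by $h\|z\|_{H^2} \lesssim h\|u_{\mathcal T_h}-u\|_{L^2}$ through $H^2$-regularity and the approximation properties of $\Pi_{\mathcal T_h}^{k+1}$, $\mathcal G_h$, and $\mathcal J_h$). The cubic nonlinearity contributes a term of the form $\kappa((u-u_{\mathcal T_h})^2(\cdots),\, z_{\mathcal T_h})_{L^2}$, which by the estimate \cref{eq:squareterm} is of order $h^{2r+2}$ and hence harmless.

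Collecting these contributions should yield $\|u_{\mathcal T_h}-u\|_{L^2}^2 \lesssim h^{r+2}\|u_{\mathcal T_h}-u\|_{L^2} + (\text{higher order})$, from which the bound $\|u_{\mathcal T_h}-u\|_{L^2}\lesssim h^{r+2}$ follows after dividing. The reconstruction estimate $\|\mathcal R_h\hat u_h - u\|_{L^2}\lesssim h^{r+2}$ I would then obtain by the triangle inequality, comparing $\mathcal R_h\hat u_h$ with $\mathcal R_h\mathcal I_h u = \mathcal G_h u$ (again \cref{eq:keyidentity}), using the $L^2$-approximation properties of the elliptic projection together with the already-established control on $\|\hat u_h - \mathcal I_h u\|_{a_h}$ and a discrete Poincaré-type bound. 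The improved eigenvalue estimate \cref{eq:improvedev} I expect to follow by revisiting the identity for $\lambda-\lambda_h$ used in \cref{eq:lamerr} and re-examining each term with the sharper $L^2$-rate now available.

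The main obstacle I anticipate is controlling the consistency error introduced by the nonconformity, specifically the terms involving $\mathcal R_h\hat y_h - \mathcal J_h\hat y_h$ and $y_{\mathcal T_h}-\mathcal J_h\hat y_h$ analogues for the dual solution: one must ensure that pairing these against $\nabla z$ and $\Delta z$ genuinely gains the full extra power of $h$, which requires that $z$ enjoys $H^2$-regularity (hence convexity of $\Omega$ is essential) and that the smoother $\mathcal J_h$ preserves enough moments for the orthogonality of $\Pi_{\mathcal T_h}^{k+1}$ to apply. A subtler point is the correct handling of the $L^2$-normalisation constraint in the dual problem: the perturbation $|1-\|v_{\mathcal T_h}\|_{L^2}|\lesssim h^{r+1}$ must be tracked carefully so that it does not spoil the doubling of the rate, and the dual problem may need a rank-one correction (projecting out the $u$-direction) to be well-posed, mirroring the treatment of the constrained minimisation in \cref{lem:err_est}.
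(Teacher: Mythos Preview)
Your proposal is essentially the same Aubin--Nitsche strategy the paper uses: a dual problem for the linearised operator $J_{u,\lambda}$ posed on $u^\perp$ (the rank-one correction you anticipate is exactly what the paper does), $H^2$-regularity of the dual solution, testing with the smoothed error $\mathcal J_h\hat e_h$ projected onto $u^\perp$, and pairing first-order error quantities from \cref{theo:err_est} against order-$h$ approximation errors of the dual solution. The paper works with the discrete error $\hat e_h=\hat u_h-\mathcal I_h u$ rather than $u_{\mathcal T_h}-u$, but this differs only by $\|u-\Pi_{\mathcal T_h}^{k+1}u\|_{L^2}\lesssim h^{r+2}$.

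One point where the paper's route is cleaner: for the eigenvalue bound \cref{eq:improvedev}, revisiting \cref{eq:lamerr} directly is awkward because that estimate contains $\|\nabla_h(\mathcal R_h\hat u_h-u)\|_{L^2}$, which stays at order $h^{r+1}$ and is not improved by the sharper $L^2$-rate. The paper instead uses the algebraic relation $\lambda=2E+\tfrac{\kappa}{2}\|u\|_{L^4}^4$ (and its discrete analogue), so that $|\lambda-\lambda_h|\leq 2|E-E_h|+\tfrac{\kappa}{2}\big|\|u\|_{L^4}^4-\|u_{\mathcal T_h}\|_{L^4}^4\big|$; the first term is already $\mathcal O(h^{2r+2})$ by \cref{theo:err_est}, and the second is controlled by the new $L^2$-bound.
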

\begin{proof}
We begin by introducing an auxiliary problem. For any given \( w \in L^2(\Omega) \), it seeks \( \psi_w \in H_0^1(\Omega) \) such that the following equation holds in the weak sense:
\begin{equation*}
	-\Delta \psi_w + (V + 3\kappa u^2 - \lambda)\psi_w = 2\kappa (u^3, \psi_w)_{\Omega} \, u + w - (w, u)_{\Omega} \, u.
\end{equation*}
It can be verified that this problem is solved by the unique solution $\psi_{w} \in  u^{\perp}:=\{v\in H_0^1(\Omega): (u,v)_{\Omega}=0\}$ satisfying, for all  $v \in u^{\perp}$, the variational problem
	\begin{equation}\label{eq:auxi_elliptic}
			(\nabla \psi_{w},\nabla v)_{L^2} + ((V+ 3\kappa u^2-\lambda)\psi_{w},v)_{L^2} 
			=  (w,v)_{L^2}.
	\end{equation}
	The well-posedness of the latter problem follows from the Lax--Milgram theorem using the coercivity of the bilinear form on the left-hand side (cf.~\cite[Lem.~1]{CCM10})  and the fact that \( u^\perp \) is a closed subspace of \( H_0^1(\Omega) \). Classical elliptic regularity then gives that \( \psi_w \in H^2(\Omega) \cap H_0^1(\Omega) \) with the estimate  $\|\psi_w\|_{H^2} \lesssim \|w\|_{L^2}.$

Next, we define the error $\hat e_h = (e_{\mathcal{T}h}, e_{\mathcal{F}_h}) := \hat{u}_h - \mathcal{I}_h u$, test equation \cref{eq:auxi_elliptic} with the function $\mathcal{J}_h \hat{e}_h - (\mathcal{J}_h \hat{e}_h, u)_{L^2} u \in u^\perp$, where $\mathcal J_h$ denotes the moment preserving operator from \cref{prop:prese_inter}, and set $w \coloneqq e_{\mathcal{T}_h}$. This yields that
\begin{align*}
	\|e_{\mathcal{T}_h}\|_{L^2}^2 
	&= (e_{\mathcal{T}_h}, \mathcal{J}_h \hat{e}_h - (\mathcal{J}_h \hat{e}_h, u)_{L^2} \, u )_{L^2} 
	+ (\mathcal{J}_h \hat{e}_h, u)_{L^2} \, (e_{\mathcal{T}_h}, u)_{L^2} \\
	&= \underbrace{(\nabla \psi_{e_{\mathcal{T}_h}}, \nabla \mathcal{J}_h \hat{e}_h)_{L^2} 
	+ ( (V + 3\kappa u^2 - \lambda)\, \psi_{e_{\mathcal{T}_h}}, \mathcal{J}_h \hat{e}_h )_{L^2}}_{\eqqcolon \Xi_1} \\
	&\qquad - \underbrace{(\mathcal{J}_h \hat{e}_h, u)_{L^2} 
	\big( (\nabla \psi_{e_{\mathcal{T}_h}}, \nabla u)_{L^2} 
	+ ((V + 3\kappa u^2 - \lambda)\, \psi_{e_{\mathcal{T}_h}}, u)_{L^2} \big)}_{\eqqcolon \Xi_2} \\
	&\qquad  + \underbrace{(\mathcal{J}_h \hat{e}_h, u)_{L^2} \, (e_{\mathcal{T}_h}, u)_{L^2}}_{\eqqcolon \Xi_3}.
\end{align*}

Before estimating terms $\Xi_1$--$\Xi_3$, we first derive a bound for $|(\mathcal{J}_h \hat{e}_h, u)_{L^2}|$. To this end, we use the properties of $\mathcal{J}_h$ as stated in \cref{prop:prese_inter}, which yields that
\begin{align}
	\label{eq:L2innterproductJeu}
	(\mathcal J_h\hat{e}_h,u)_{L^2} 
%	& = (\mathcal J_h\hat{e}_h,u_{\mathcal{T}_h})_{L^2} + (\mathcal J_h\hat{e}_h,u-u_{\mathcal{T}_h})_{L^2}\\
	 = (e_{\mathcal{T}_h},u_{\mathcal{T}_h})_{L^2} + (\mathcal J_h\hat{e}_h,u-u_{\mathcal{T}_{h}})_{L^2}.
\end{align}
The two terms on the right-hand side  can be individually estimated as
\begin{align*}
	|(e_{\mathcal{T}_h},u_{\mathcal{T}_h})_{L^2}| &=
%	 |(u_{\mathcal{T}_h}-\Pi_{\mathcal T_h}^{k+1}u,u_{\mathcal{T}_h})_{L^2}| =
	  |(u_{\mathcal{T}_h}-u,u_{\mathcal{T}_h})_{L^2}| = \tfrac{1}{2}\|u-u_{\mathcal{T}_h}\|_{L^2}^2\lesssim h^{2r+2},\\
		|(\mathcal J_h\hat{e}_h,u-u_{\mathcal{T}_{h}})_{L^2}|&\leq \|\mathcal J_h\hat{e}_h\|_{L^2}\|u-u_{\mathcal{T}_h}\|_{L^2}\lesssim \|\hat{e}_h\|_{a_h} \|u-u_{\mathcal{T}_h}\|_{L^2}\lesssim h^{2r+2},
\end{align*}
where we used the $H^1$-continuity of $\mathcal J_h$ and the error estimates provided in \cref{theo:err_est}. 
By inserting the latter two bounds into \cref{eq:L2innterproductJeu}, we obtain that
\begin{equation*}
	|(\mathcal J_h\hat{e}_h,u)_{L^2}|\lesssim h^{2r+2}.
\end{equation*}
Combining this estimate with the $L^\infty$-bound for $u$ and recalling the $H^2$-regularity estimate $\|\psi_{e_{\mathcal{T}_h}}\|_{H^2} \lesssim \|e_{\mathcal{T}_h}\|_{L^2}$, we obtain the following bounds for $\Xi_2$ and $\Xi_3$:
\begin{align*}
	|\Xi_2| \lesssim h^{2r+2}\|\psi_{e_{\mathcal{T}_h}}\|_{H^1} \lesssim h^{2r+2}\|e_{\mathcal{T}_h}\|_{L^2},\qquad 
	|\Xi_3| \lesssim h^{2r+2}\|e_{\mathcal{T}_h}\|_{L^2}.
\end{align*}

For estimating term $\Xi_1$, we introduce the function
\[
\hat{\phi}_h = (\phi_{\mathcal{T}_h}, \phi_{\mathcal{F}_h}) := \mathcal{I}_h \left( \psi_{e_{\mathcal{T}_h}} - \frac{(\psi_{e_{\mathcal{T}_h}}, \Pi_{\mathcal{T}_h}^{k+1} u)_{L^2}}{(u, \Pi_{\mathcal{T}_h}^{k+1} u)_{L^2}} u \right) \in \hat{U}_h,
\]
for which, by construction, it holds that $(\phi_{\mathcal{T}_h}, u)_{L^2} = 0$. To estimate terms involving~$\hat{\phi}_h$, the following bounds will play an important role:
\begin{align}
	\label{eq:boundsforphih1}
			|(\psi_{e_{\mathcal{T}_h}},\Pi_{\mathcal T_h}^{k+1}u)_{L^2}| &
		%	= |(\psi_{e_{\mathcal{T}_h}},\Pi_{\mathcal T_h}^{k+1}u-u)_{L^2}|
		= |(\psi_{e_{\mathcal{T}_h}} -\Pi_{\mathcal T_h}^1\psi_{e_{\mathcal{T}_h}} ,\Pi_{\mathcal T_h}^{k+1}u-u)_{L^2}|\lesssim h^{r+4}\|\psi_{e_{\mathcal{T}_h}}\|_{H^2},	\\
		|(u,\Pi_{\mathcal T_h}^{k+1}u)_{L^2}| &=|(\Pi_{\mathcal T_h}^{k+1}u,\Pi_{\mathcal T_h}^{k+1}u)_{L^2}| = |1 - \|u-\Pi_{\mathcal T_h}^{k+1}u\|_{L^2}^2|\gtrsim 1- \tfrac{h^4}{\pi^4}.\label{eq:boundsforphih2}%| u|_{H^2}.
\end{align}
The proof of these bounds relies on the fact that $\psi_{e_{\mathcal{T}_h}} \in u^\perp$, together with the approximation and orthogonality properties of $\Pi_{\mathcal{T}_h}^{k+1}$ and the Pythagorean identity. Direct calculations based on \cref{eq:boundsforphih1,eq:boundsforphih2}, which are omitted here for the sake of brevity, then yield the following approximation results:
\begin{align}
	\label{eq:estphih1}
	\| \nabla_h(\psi_{e_{\mathcal{T}_h}} - \mathcal R_h \hat \phi_h)\|_{L^2} +
%	 \|\nabla_h(\psi_{e_{\mathcal{T}_h}} -\mathcal G_h \psi_{e_{\mathcal{T}_h}})\|_{L^2} + h^{r+4}\|\psi_{e_{\mathcal{T}_h}}\|_{H^2} + 
	 \|\nabla(\mathcal J_h\hat\phi_h - \psi_{e_{\mathcal{T}_h}}) \|_{L^2}
%	 \lesssim
	 %	 \|\nabla(\mathcal J_h\mathcal I_h\psi_{e_{\mathcal{T}_h}}  - \psi_{e_{\mathcal{T}_h}}) \|_{L^2} + h^{r+4}\|\psi_{e_{\mathcal{T}_h}}\|_{H^2}\lesssim 
%	 h \|\psi_{e_{\mathcal{T}_h}}\|_{H^2},\quad
+	|\phi_h|_{s_h}&\lesssim
%	 \|\nabla_h(\psi_{e_{\mathcal{T}_h}} -\mathcal G_h \psi_{e_{\mathcal{T}_h}})\|_{L^2} + h^{r+4}\|\psi_{e_{\mathcal{T}_h}}\|_{H^2}\lesssim 
	 h \|\psi_{e_{\mathcal{T}_h}}\|_{H^2},\\
	 	\|\psi_{e_{\mathcal{T}_h}} -  \phi_{\mathcal{T}_h}\|_{L^2} 
	 %	 \|\psi_{e_{\mathcal{T}_h}} - \Pi_{\mathcal T_h}^{k+1}\psi_{e_{\mathcal{T}_h}}\|_{L^2 } + h^{r+4}\|\psi_{e_{\mathcal{T}_h}}\|_{H^2}	
	 +
	\|\mathcal J_h\hat\phi_h - \psi_{e_{\mathcal{T}_h}} \|_{L^2}&\lesssim
%	 \|\mathcal J_h\mathcal I_h\psi_{e_{\mathcal{T}_h}}  - \psi_{e_{\mathcal{T}_h}} \|_{L^2} + h^{r+4}\|\psi_{e_{\mathcal{T}_h}}\|_{H^2}\lesssim 
	 h^2 \|\psi_{e_{\mathcal{T}_h}}\|_{H^2}.\label{eq:estphih2}
\end{align}

To estimate the term $\Xi_1$, we start by rewriting it as
\begin{equation}\label{eq:Xi1_expansion}
	\begin{aligned}
		\Xi_1 &= (\nabla \mathcal J_h\hat{e}_h, \nabla_h \mathcal R_h \hat{\phi}_h)_{L^2} + ((V + 3\kappa u^2 - \lambda)\mathcal J_h \hat{e}_h, \phi_{\mathcal{T}_h})_{L^2} \\
		&\quad + (\nabla \mathcal J_h \hat{e}_h, \nabla \psi_{e_{\mathcal{T}_h}} - \nabla_h \mathcal R_h\hat \phi_h)_{L^2} + ((V + 3\kappa u^2 - \lambda) \mathcal J_h \hat{e}_h, \psi_{e_{\mathcal{T}_h}} - \phi_{\mathcal{T}_h})_{L^2}.
	\end{aligned}
\end{equation}
Using the properties of $\mathcal{J}_h$ from \cref{prop:prese_inter}, together with \cref{eq:eigen_pro:conti,eq:eigen_pro:discrete} and some algebraic manipulations, we arrive at the identity
\begin{align*}
	&{(\nabla \mathcal J_h\hat{e}_h,\nabla_h\mathcal R_h \hat\phi_h)_{L^2} + ((V+ 3\kappa u^2-\lambda)\mathcal J_h\hat{e}_h , \phi_{\mathcal{T}_h})_{L^2}}\\
	 &\qquad= 
	 \underbrace{((3\kappa u^2 -\kappa u_{\mathcal{T}_h}^2 + \lambda_h-\lambda)u_{\mathcal{T}_h},\phi_{\mathcal{T}_h})_{L^2} - (2\kappa u^3,\phi_{\mathcal{T}_h})_{L^2}}_{\eqqcolon \xi_1}- \underbrace{s_h(\phi_h,\hat{u}_h)}_{\eqqcolon \xi_2}\\
	& \qquad \qquad  + \underbrace{(\nabla_h (u-\mathcal G_hu),\nabla \mathcal J_h \hat\phi_h)_{L^2}}_{\eqqcolon \xi_3} + \underbrace{((V+ \kappa u^2-\lambda)u, \mathcal J_h\hat\phi_{h}- \phi_{\mathcal{T}_h})_{L^2}}_{\eqqcolon \xi_4}\\
	& \qquad \qquad + \underbrace{((V+ 3\kappa u^2-\lambda)(\mathcal J_h\hat{e}_h-e_{\mathcal{T}_h}), \phi_{\mathcal{T}_h})_{L^2}}_{\eqqcolon \xi_5} \\
	&\qquad \qquad+ \underbrace{((V+ 3\kappa u^2-\lambda)(u- \Pi_{\mathcal T_h}^{k+1}u), \phi_{\mathcal{T}_h})_{L^2}}_{\eqqcolon \xi_6},
\end{align*}
where terms $\xi_1$--$\xi_6$ are estimated individually in the following. 

Using \cref{theo:err_est}, \cref{eq:estphih1,eq:estphih2}, the approximation properties of the Galerkin- and $L^2$-projections, and the properties of $\mathcal{J}_h$ from \cref{prop:prese_inter}, we obtain that
\begin{align*}
	|\xi_1| & \leq |\kappa((u-u_{\mathcal{T}_h})^2(2u+u_{\mathcal{T}_h}),\phi_{\mathcal{T}_h})_{L^2}| \\
	&\qquad\quad + |\lambda_h-\lambda| |(u-u_{\mathcal{T}_h},\phi_{\mathcal{T}_h})_{L^2}|
%	& \ \ \ \ \ \ \lesssim (\|(u-u_{\mathcal{T}_h})^2\|_{L^3} + |\lambda-\lambda_h|\|u-u_{\mathcal{T}_h}\|_{L^2})\|\phi_{\mathcal{T}_h}\|_{L^2}\\
%	& \ \ \ \ \ \ \lesssim  h^{2r+2}\|\phi_{\mathcal{T}_h}\|_{L^2} 
	\lesssim h^{2r+2}\|\psi_{e_{\mathcal{T}_h}}\|_{H^2},\\
	|\xi_2| &\leq |\phi_h|_{s_h}|\hat{u}_h|_{s_h}\lesssim h^{r+2}\|\psi_{e_{\mathcal{T}_h}}\|_{H^2},\\
	|\xi_3| &\leq
%	 |(\nabla_h (u-\mathcal G_hu),\nabla \mathcal J_h \phi_h)_{L^2}| & \leq 
%	|(\nabla_h (u-\mathcal G_hu),\nabla \psi_{e_{\mathcal{T}_h}})_{L^2}| + |(\nabla_h (u-\mathcal G_hu),\nabla (\psi_{e_{\mathcal{T}_h}}- \mathcal J_h\phi_h))_{L^2}|\\
	 |(\nabla_h (u-\mathcal G_hu),\nabla_h( \psi_{e_{\mathcal{T}_h}} - \mathcal G_h \psi_{e_{\mathcal{T}_h}} ))_{L^2}|\\
	 &\qquad \quad + |(\nabla_h (u-\mathcal G_hu),\nabla (\psi_{e_{\mathcal{T}_h}}- \mathcal J_h\hat\phi_h))_{L^2}|\lesssim h^{r+2}\|\psi_{e_{\mathcal{T}_h}}\|_{H^2},\\
%	|((V+ \kappa u^2-\lambda)u, \mathcal J_h\phi_{h}- \phi_{\mathcal{T}_h})_{L^2}| & 
|\xi_4| 
%	& = |(\Delta u, \mathcal J_h\phi_{h}- \phi_{\mathcal{T}_h})_{L^2}|\\
%	& =  |(\Delta u - \Pi_{\mathcal T_h}^{k+1}\Delta u, \mathcal J_h\phi_{h}- \phi_{\mathcal{T}_h})_{L^2}|\\
	&\leq |(\Delta u - \Pi_{\mathcal T_h}^{k+1}\Delta u, \mathcal J_h\hat\phi_{h}- \psi_{e_{\mathcal{T}_h}})_{L^2}| \\
	&\qquad \quad + |(\Delta u - \Pi_{\mathcal T_h}^{k+1}\Delta u, \psi_{e_{\mathcal{T}_h}}- \phi_{\mathcal{T}_h})_{L^2}|\lesssim h^{r+2}\|\psi_{e_{\mathcal{T}_h}}\|_{H^2},\\
	|\xi_5| &\lesssim\|\mathcal J_h\hat{e}_h-e_{\mathcal{T}_h}\|_{L^2}\|\phi_{\mathcal{T}_h}\|_{L^2} \lesssim h\|\nabla \mathcal J_h\hat{e}_h\|_{L^2}\|\phi_{\mathcal{T}_h}\|_{L^2}\lesssim h^{r+2}\|\psi_{e_{\mathcal{T}_h}}\|_{H^2},\\
	|\xi_6| &\lesssim \|u- \Pi_{\mathcal T_h}^{k+1}u\|_{L^2}\|\phi_{\mathcal{T}_h}\|_{L^2}\lesssim h^{r+2}\|\psi_{e_{\mathcal{T}_h}}\|_{H^2}.
\end{align*}
Note that the estimate for $\xi_1$ uses arguments similar to those in \cref{eq:squareterm} together with $(\phi_{\mathcal{T}_h},u)_{L^2}=0$, while the estimate for $\xi_4$ relies on the identity $(V + \kappa u^2 - \lambda) u = \Delta u$. 
Combining these estimates with the $H^2$-regularity bound  \(\|\psi_{e_{\mathcal{T}_h}}\|_{H^2} \lesssim \|e_{\mathcal{T}_h}\|_{L^2},
\) we obtain for the first term on the right-hand side of \cref{eq:Xi1_expansion} that
\begin{equation*}
	|{(\nabla \mathcal J_h\hat{e}_h,\nabla_h\mathcal R_h \hat\phi_h)_{L^2} + ((V+ 3\kappa u^2-\lambda)\mathcal J_h\hat{e}_h , \phi_{\mathcal{T}_h})_{L^2}}|
%	 \lesssim h^{r+2}\|\psi_{e_{\mathcal{T}_h}}\|_{H^2}
	\lesssim h^{r+2}\|e_{\mathcal{T}_h}\|_{L^2}.
\end{equation*}

For the second term on the right-hand side of \cref{eq:Xi1_expansion}, we obtain
\begin{align*}
	&|(\nabla \mathcal J_h\hat{e}_h,\nabla\psi_{e_{\mathcal{T}_h}} - \nabla_h\mathcal R_h \hat\phi_h)_{L^2} + ((V+ 3\kappa u^2-\lambda)\mathcal J_h\hat{e}_h , \psi_{e_{\mathcal{T}_h}} -  \phi_{\mathcal{T}_h})_{L^2}|\\
	&\;\lesssim \|\nabla \mathcal J_h\hat{e}_h\|_{L^2}\|\nabla\psi_{e_{\mathcal{T}_h}} - \nabla_h\mathcal R_h \hat\phi_h\|_{L^2} + \|\mathcal J_h\hat{e}_h \|_{L^2}\|\psi_{e_{\mathcal{T}_h}} -  \phi_{\mathcal{T}_h}\|_{L^2}
%	\lesssim h^{r+2}\|\psi_{e_{\mathcal{T}_h}}\|_{H^2}
	\lesssim h^{r+2}\|e_{\mathcal{T}_h}\|_{L^2},
\end{align*}
where we applied \cref{prop:prese_inter} and \cref{eq:estphih1} to estimate the first term, and \cref{eq:estphih2} together with the $H^1$-continuity of $\mathcal{J}_h$ for the second term.

We have now estimated both terms in the expression for $\Xi_1$ from \cref{eq:Xi1_expansion}. Combining these estimates yields the bound
\begin{equation*}
	|\Xi_1| \lesssim h^{r+2} \|e_{\mathcal{T}_h}\|_{L^2},
\end{equation*}
	and the desired improved $L^2$-estimate, which is the first inequality in \cref{eq:improvedL2errest}, follows directly by combining the estimates for $\Xi_1$--$\Xi_3$ and $\|u-\Pi_{\mathcal{T}_h}^{k+1}u\|_{L^2}\lesssim h^{r+2}$. Given the improved $L^2$-estimate,  the refined eigenvalue approximation from \cref{eq:improvedev} follows as
\begin{equation*}
	|\lambda-\lambda_h|\leq 2|E-E_h| + \tfrac{\kappa}{2}|\|u\|_{L^4}^4-\|u_{\mathcal{T}_h}\|_{L^4}^4|\lesssim h^{r+2}.
\end{equation*}

To establish the refined $L^2$-estimate for the reconstructed approximation, we begin with the triangle inequality, which gives 
\begin{align}
	\label{eq:improvedL2rec}
	\|\mathcal R_h\hat{u}_h - u\|_{L^2} \leq \|\mathcal R_h\hat{u}_h - \mathcal R_h\mathcal{I}_h u\|_{L^2} + \|\mathcal{G}_h u - u\|_{L^2}.
\end{align}
For estimating the first term on the right-hand side, we add and subtract the $L^2$-projection onto piecewise constants and apply the Poincaré inequality from \cref{lem:poincare}, along with \cref{theo:err_est} and the approximation properties of the Galerkin projection. This yields the estimate
\begin{align*}
	\|\mathcal R_h\hat{u}_h - \mathcal R_h\mathcal{I}_h u\|_{L^2} 
	&\lesssim \|h \nabla_h(\mathcal R_h\hat{u}_h - \mathcal R_h\mathcal{I}_h u)\|_{L^2} + \|\Pi_{\mathcal{T}_h}^0(\mathcal R_h\hat{u}_h - \mathcal R_h\mathcal{I}_h u)\|_{L^2} \\
	&= \|h \nabla_h(\mathcal R_h\hat{u}_h - \mathcal{G}_h u)\|_{L^2} + \|\Pi_{\mathcal{T}_h}^0(u_{\mathcal{T}_h} - \Pi_{\mathcal{T}_h}^{k+1} u)\|_{L^2} \lesssim h^{r+2}.
\end{align*}
The second term on the right-hand side of \cref{eq:improvedL2rec} can be estimated in the same way, noting that the Galerkin projection preserves element averages by definition, and again using the Poincaré inequality from \cref{lem:poincare}. The refined $L^2$-estimate for the reconstruction, which is the second bound in \cref{eq:improvedL2errest}, follows directly.
\end{proof}

The eigenvalue estimate in \cref{eq:improvedev} appears suboptimal compared to the energy estimate in \cref{eq:energylamerr}, noting that eigenvalue and energy approximations typically converge at the same rate. The following remark shows that, under suitable regularity assumptions, optimal convergence for the eigenvalue can be recovered.

\begin{remark}[Optimal eigenvalue approximation]
Assume that \( u^3 \in H^m(\Omega) \) for some \( 0 \leq m \leq r \), and that the solution \( \psi_{u^3} \) to the dual problem \cref{eq:auxi_elliptic} with right-hand side \( w = u^3 \) satisfies \( \psi_{u^3} \in H^{m+2}(\Omega) \) with the estimate \( \|\psi_{u^3}\|_{H^{m+2}} \lesssim \|u^3\|_{H^m} \). Then there holds the improved eigenvalue approximation result
\[
|\lambda - \lambda_h| \lesssim h^{r + m + 2}.
\]
The proof of this result proceeds similarly to that of the improved $L^2$-error estimate in \cref{theo:l2errorest}, now using dual problem \cref{eq:auxi_elliptic} with the right-hand side \(w = u^3\). This explains the regularity assumptions above. For brevity, the details are omitted here; we refer, for example, to \cite{henning2024discretegroundstatesrotating}, where the corresponding proof is carried out for a classical high-order conforming finite element method.
\end{remark}

Thus far, we have conducted an error analysis of HHO approximation \cref{eq:hhoapprox} to the Gross--Pitaevskii ground state. We now turn our attention to the analysis of its modified variant, defined in \cref{eq:hhoapprox0}, which achieves guaranteed lower bounds for the ground state energy. Recall that only the lowest-order case \( k = 0 \) is of interest for the modified approximation, as the low-order quadrature in~\cref{eq:hhoapprox0} prevents improved convergence rates for \( k > 0 \). Similar as for the classical HHO ground state approximation~\cref{eq:hhoapprox}, also any discrete ground state of its modified variant~\cref{eq:hhoapprox0} satisfies a discrete eigenvalue problem. It seeks an eigenpair $(\hat u_h^0,\lambda_h^0) \in \hat U_h \times \mathbb R$ with $\|u_{\mathcal T_h}^0\|_{L^2} = 1$ such that, for all $\hat v_h \in \hat U_h$, it holds that
\begin{equation}\label{eq:eigen_pro:discrete0}
	\begin{aligned}
		a_h(\hat{u}_h^0, \hat{v}_h) + (V u_{\mathcal{T}_h}^0, v_{\mathcal{T}_h})_{L^2} + \kappa n_h(\hat{u}_h^0, \hat{v}_h) = \lambda_h^0 (u_{\mathcal{T}_h}^0, v_{\mathcal{T}_h})_{L^2},
	\end{aligned}
\end{equation}
where the nonlinearity is encoded in the form \( n_h \), defined for any \(\hat{v}_h, \hat{\varphi}_h \in \hat{U}_h\), as
\begin{equation*}
	n_h(\hat{v}_h, \hat{\varphi}_h) \coloneqq \tfrac{1}{2} \big((\Pi_{\mathcal{T}_h}^0 v_{\mathcal{T}_h})^2 v_{\mathcal{T}_h}, \varphi_{\mathcal{T}_h}\big)_{L^2} + \tfrac{1}{2} \big((v_{\mathcal{T}_h})^2 \Pi_{\mathcal{T}_h}^0 v_{\mathcal{T}_h}, \Pi_{\mathcal{T}_h}^0 \varphi_{\mathcal{T}_h}\big)_{L^2}.
\end{equation*}

The following theorem summarises the convergence results for the modified HHO approximation. As the proof follows that of the standard HHO method, we keep the presentation short and directly state the final result.
\begin{theorem}[A priori error estimate]\label{theo:err_est0}
Assume that \( V \in H^1(\mathcal{T}_h) \). Then, the approximation of the modified HHO method satisfies the following estimates:
\[
\|\nabla_h(\mathcal{R}_h \hat{u}_h^0 - u)\|_{L^2} + |\hat{u}_h^0|_{s_h} + |\hat{u}_h^0 - \mathcal{I}_h u|_{s_h} \lesssim h.
\]
Moreover, the ground state approximation satisfies the  \( L^2 \)-error bounds
\[
\|u_{\mathcal{T}_h}^0 - u\|_{L^2} + \|\mathcal{R}_h \hat{u}_h^0 - u\|_{L^2} \lesssim h^2,
\]
and the energy and eigenvalue approximations satisfy
\[
|E - E_h^0| \lesssim h^2, \quad |\lambda - \lambda_h^0| \lesssim h^2.
\]
\end{theorem}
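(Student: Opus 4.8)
The plan is to mirror, step by step, the analysis carried out for the unmodified HHO method in \cref{theo:plain_conver,lem:firstterm,lem:err_est,theo:err_est,theo:l2errorest}, specialised to the lowest-order case $k = r = 0$. The only structural difference is that the discrete ground state now satisfies the modified eigenvalue problem \cref{eq:eigen_pro:discrete0}, in which the exact cubic form $\kappa(u_{\mathcal{T}_h}^3, \cdot)_{L^2}$ is replaced by $\kappa\,n_h(\hat u_h^0, \cdot)$; consequently the whole chain of estimates goes through once the discrepancy between $n_h$ and the exact cubic form is controlled. First I would re-establish plain convergence: uniform boundedness of $E_h^0$ follows from the lower bound $E_h^0 \le E$ of \cref{theo:lower_bound} together with the upper bound $E_h^0 \le \mathcal E_h^0(\mathcal I_h u/\|\Pi_{\mathcal{T}_h}^{k+1}u\|_{L^2})$, and since $n_h(\hat u_h^0,\hat u_h^0) = (|\Pi_{\mathcal{T}_h}^0 u_{\mathcal{T}_h}^0|^2 u_{\mathcal{T}_h}^0, u_{\mathcal{T}_h}^0)_{L^2}\ge 0$ this yields uniform boundedness of $\lambda_h^0$ and $\|\hat u_h^0\|_{a_h}$, hence of $\|u_{\mathcal{T}_h}^0\|_{L^6}$ via \cref{prop:discrete:sobolev:embed}. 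The remaining plain convergence statements then follow exactly as in \cref{theo:plain_conver}.

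The key new ingredient is a pair of estimates for the quadrature form $n_h$. Since for $k = 0$ the bulk component $u_{\mathcal{T}_h}^0$ is $\mathcal{T}_h$-piecewise affine, a local Poincaré inequality gives, for every $T \in \mathcal{T}_h$, the first-order bound $\|(\mathtt{id} - \Pi_T^0)v_T\|_{L^2(T)} \lesssim h_T\|\nabla v_T\|_{L^2(T)}$, while the $L^2$-orthogonality of $\mathtt{id} - \Pi_T^0$ against constants upgrades paired occurrences to second order via $((\mathtt{id}-\Pi_T^0)a,b)_{L^2(T)} = ((\mathtt{id}-\Pi_T^0)a,(\mathtt{id}-\Pi_T^0)b)_{L^2(T)}$. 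Expanding $n_h(\hat u_h^0,\hat v_h) - ((u_{\mathcal{T}_h}^0)^3, v_{\mathcal{T}_h})_{L^2}$ into telescoping differences in which each factor $\Pi_{\mathcal{T}_h}^0 u_{\mathcal{T}_h}^0 - u_{\mathcal{T}_h}^0$ or $\Pi_{\mathcal{T}_h}^0 v_{\mathcal{T}_h} - v_{\mathcal{T}_h}$ appears, the first bound shows this discrepancy to be $O(h)$ in the $H^1$-level estimates, whereas the orthogonality bound makes it $O(h^2)$ once it is tested against the smooth dual quantities of the refined estimates. This is precisely why the modification leaves the first-order $H^1$ rate intact while still permitting the second-order $L^2$, energy, and eigenvalue rates.

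With these tools in hand, I would carry out the analogs of the three main results. \cref{lem:firstterm} transfers verbatim, since its auxiliary object $\hat w_h$ is the HHO approximation of the Poisson problem \cref{eq:auxpoisson} with the \emph{exact} right-hand side $f = \lambda u - Vu - \kappa u^3$ and is unaffected by the modification. In the analog of \cref{lem:err_est}, the identity for $\Xi_1 = J_{u,\lambda}(\mathbf u_h^0 - \mathbf u, \mathbf y_h)$ is derived as before but using \cref{eq:eigen_pro:discrete0}; its nonlinear part produces the familiar term $-\kappa((u - u_{\mathcal{T}_h}^0)^2(2u + u_{\mathcal{T}_h}^0), y_{\mathcal{T}_h})_{L^2}$ together with the quadrature remainder $\kappa(n_h(\hat u_h^0,\hat y_h) - ((u_{\mathcal{T}_h}^0)^3, y_{\mathcal{T}_h})_{L^2})$, which the first-order bound renders absorbable; this yields the first-order estimate (analog of \cref{theo:err_est}). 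The refined $L^2$ and energy estimates (analog of \cref{theo:l2errorest}) then follow from the same duality argument, now invoking the orthogonality bound to keep every quadrature remainder at order $h^2$; in particular, the energy consistency is $O(h^2)$ by applying orthogonality to the modified quartic. The eigenvalue estimate finally follows from the identity $\lambda_h^0 = 2E_h^0 + \tfrac{\kappa}{2}(|\Pi_{\mathcal{T}_h}^0 u_{\mathcal{T}_h}^0|^2 u_{\mathcal{T}_h}^0, u_{\mathcal{T}_h}^0)_{L^2}$ combined with the $O(h^2)$ energy and $L^2$ bounds, and the reconstruction bound $\|\mathcal R_h\hat u_h^0 - u\|_{L^2}\lesssim h^2$ is obtained by the triangle-inequality and Poincaré argument of \cref{theo:l2errorest}.

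The main obstacle is the bookkeeping of the nonlinear quadrature remainder and confirming that it never degrades the second-order rates: although $\mathtt{id}-\Pi_{\mathcal{T}_h}^0$ is only first order, one must verify that in each refined estimate every such factor is genuinely paired, via the orthogonality identity, with a second first-order factor or with a smooth dual function, so that the product is $O(h^2)$ rather than $O(h)$. The regularity assumption $V \in H^1(\mathcal{T}_h)$ enters here, supplying the extra order needed to keep the potential-weighted consistency contributions at second order, mirroring the role played by $\Delta u \in L^2(\Omega)$ for $r = 0$. A minor additional check is that the uniform a priori bounds underlying plain convergence survive the replacement of the cubic term by $n_h$, which is immediate from the nonnegativity and energy-boundedness of $n_h(\hat u_h^0,\hat u_h^0)$ noted above.
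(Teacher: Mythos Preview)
Your overall strategy---mirror the chain \cref{theo:plain_conver}, \cref{lem:firstterm}, \cref{lem:err_est}, \cref{theo:err_est}, \cref{theo:l2errorest} at $k=r=0$ while tracking the extra quadrature remainder $n_h(\hat u_h^0,\cdot)-((u^0_{\mathcal T_h})^3,\cdot)_{L^2}$---is exactly the paper's route.

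There is, however, a real gap in how you propose to control that remainder. To make the quadrature error $O(h^2)$ one must estimate products of four discrete factors; after your orthogonality identity one lands, for instance, on sums of the type $\sum_{T}\Pi_T^0(u^0_T\varphi_T)\,\|u_T^0-\Pi_T^0 u_T^0\|_{L^2(T)}^2$. The local Poincar\'e step supplies two powers of $h$ measured in $L^2$, but the two remaining factors then have to be placed in $L^\infty$; in $d=3$ the discrete Sobolev embedding of \cref{prop:discrete:sobolev:embed} only reaches $L^6$, and inverse inequalities would eat the gained powers of $h$. The paper supplies the missing ingredient by first proving uniform $L^\infty$-bounds on $u^0_{\mathcal T_h}$ and $\mathcal J_h\hat u_h^0$ via \cref{lem:linf_bound} (reinterpreting $\hat u_h^0$ as the HHO approximation of an auxiliary Poisson problem and combining an improved $L^2$-estimate with an inverse inequality), and this is precisely where the hypothesis $V\in H^1(\mathcal T_h)$ enters---not, as you suggest, through ``potential-weighted consistency contributions''. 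With these $L^\infty$-bounds in hand, the quadrature remainder is shown to be $O(h^2)$ directly by \cref{lem:modi_1} (together with \cref{eq:JhIh_linf} for $\mathcal J_h\mathcal I_h u$), so no separate $O(h)$ stage at the energy level is needed.

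A minor point: invoking \cref{theo:lower_bound} for the plain-convergence step is both unnecessary and inadmissible here, since that result requires $V\in\mathcal P^0(\mathcal T_h)$ and the smallness condition \cref{eq:condstabilization}, neither of which is assumed in \cref{theo:err_est0}; the minimisation upper bound $E_h^0\le\mathcal E_h^0(\mathcal I_h u/\|\Pi^{k+1}_{\mathcal T_h}u\|_{L^2})$ by itself already yields the uniform energy bound.
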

\begin{proof}As before, one first needs to establish the plain convergence of the modified HHO method. The proof is very similar to that of \cref{theo:plain_conver} for the classical~HHO method and is therefore omitted. As a direct consequence, we obtain the uniform boundedness of the modified discrete energies \( E_h^0 \), as well as the uniform boundedness of \( \|u^0_{\mathcal{T}_h}\|_{L^6} \) via the discrete Sobolev embedding from  \cref{prop:discrete:sobolev:embed}. Furthermore, \cref{lem:linf_bound} ensures the uniform boundedness of \( \|u_{\mathcal{T}_h}^0\|_{L^\infty} \) and \(\|\mathcal{J}_h\hat{u}_h^0\|_{L^{\infty}}\).
Next, repeating the arguments from \cref{lem:firstterm,lem:err_est}, \cref{theo:err_est,theo:l2errorest}, and using \cref{lem:modi_1} to bound the quadrature errors together with \cref{eq:JhIh_linf}, we also obtain the desired convergence properties of the modified HHO approximation.
\end{proof}

\section{Numerical experiments}\label{sec:numexp}

This section presents numerical experiments that validate the theory and demonstrate the method's practical effectiveness.  
To solve the finite-dimensional constrained minimisation problems~\cref{eq:hhoapprox0,eq:hhoapprox}, we employ solvers specifically tailored to their structure.
Of the available approaches (see \cite{HenJ25} for an overview), we employ an adaptation of the (fully discrete) energy-adaptive Sobolev gradient flow from \cite{HenP20}, together with an adaptive choice of step sizes (see Remark 4.3 of that paper). The initial iterate is constructed by setting all interior degrees of freedom to one and all boundary degrees of freedom to zero. The resulting function is normalised with respect to the discrete $L^2$-norm to obtain a suitable initial guess. The iteration is terminated if the relative $L^2$-residual of the current iterate and the relative energy difference between two consecutive iterates falls below $10^{-12}$. The maximal number of iterations is $10^3$. Details on the implementation can be found in the code available at \url{https://github.com/moimmahauck/GPE_HHO_code}, which is based on an implementation of the HHO method used for~\cite{Tran2024}. The latter, in turn, is based on the basic finite element implementation detailed in \cite{Alberty1999}.

All of our numerical experiments consider the domain $\Omega = (-8,8)^2$ and a hierarchy of Friedrichs–Keller triangulations constructed by uniform red refinement of an initial triangulation of the domain consisting of two elements. 
We exclude the coarsest meshes from this hierarchy since they do not resolve the considered potentials and therefore do not yield meaningful approximations. 
To simplify the notation, we denote the side length of the squares formed by joining opposing triangles by~$h$. Since analytical solutions are typically not available, errors are computed with respect to the $Q^{k+1}$-finite element approximation on the uniform Cartesian grid obtained by twice uniform red refinement of the finest mesh in the hierarchy and joining opposing triangles. With a slight abuse of notation, we  denote its energy, eigenvalue, and ground state approximations by $E$, $\lambda$, and $u$, respectively.

\subsection{Guaranteed lower energy bounds}
\begin{figure}
	\begin{minipage}[c]{.32\linewidth}
		\includegraphics[width=\linewidth, height=.85\linewidth]{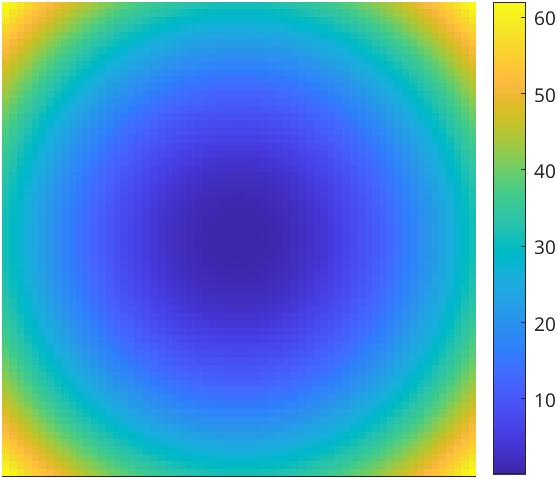}
	\end{minipage}\hfill
	\begin{minipage}[c]{.32\linewidth}
		\includegraphics[width=\linewidth, height=.85\linewidth]{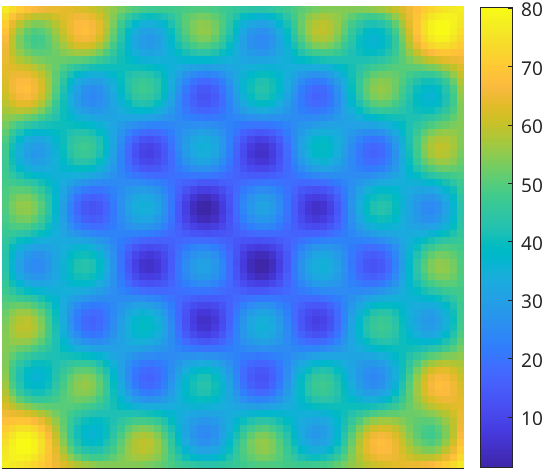}
	\end{minipage}\hfill
	\begin{minipage}[c]{.32\linewidth}
		\includegraphics[width=\linewidth, height=.89\linewidth]{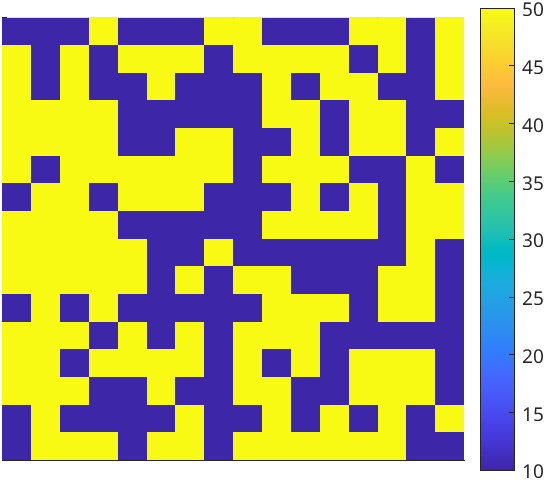}
	\end{minipage}\\[.25cm]
	\begin{minipage}[c]{.32\linewidth}
		\includegraphics[width=\linewidth, height=.85\linewidth]{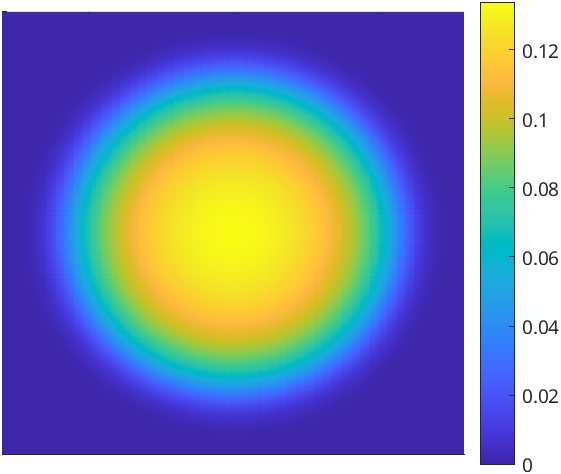}
	\end{minipage}\hfill
	\begin{minipage}[c]{.32\linewidth}
		\includegraphics[width=\linewidth, height=.85\linewidth]{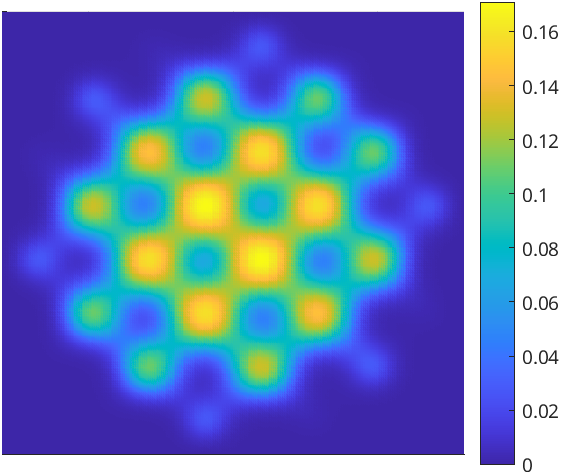}
	\end{minipage}\hfill
	\begin{minipage}[c]{.32\linewidth}
		\includegraphics[width=\linewidth, height=.83\linewidth]{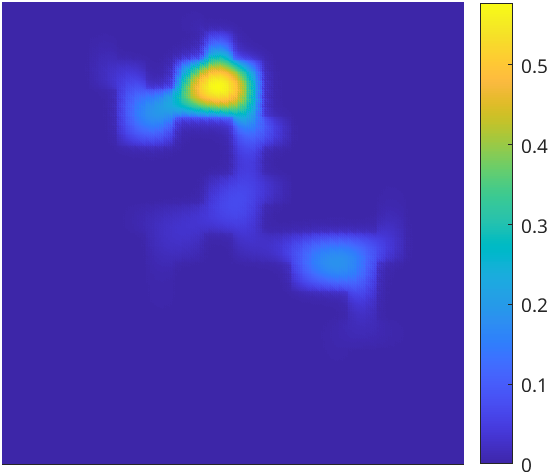}
	\end{minipage}
	\caption{First row: Potentials $V_1$, $V_2$, and~$V_3$ (from left to right). Second row: corresponding ground state approximations.}
	\label{fig:potandgs}
\end{figure}

To investigate the guaranteed lower energy bounds provided by the modified HHO method from \cref{eq:hhoapprox0}, we present three numerical experiments.
The \emph{first experiment} considers the harmonic potential
$
V_1(x) \coloneqq \tfrac{1}{2} |x|^2,
$
with the particle interaction parameter set to \( \kappa = 1000 \).
In the \emph{second experiment}, we use the so-called \emph{lattice potential}, defined as
\[
V_2(x) \coloneqq \tfrac{1}{2} |x|^2 + 15\left(1 + \sin\left(\tfrac{\pi x_1}{2}\right)\sin\left(\tfrac{\pi x_2}{2}\right)\right),
\]
again with \( \kappa = 1000 \).
The \emph{third experiment} involves a disorder potential, denoted by \( V_3 \), which is piecewise constant on a Cartesian grid with mesh size \( h = 2^{0} \). The values on each grid element are assigned randomly as independent realisations of coin-toss variables taking values in \( \{10, 50\} \). The particle interaction parameter is set to \( \kappa = 1 \). Illustrations of the potentials and corresponding ground state approximations are provided in \cref{fig:potandgs}.
Note that for the disorder potential, a phenomenon known as Anderson localisation occurs (see, e.g., \cite{AltHP20,AltHP22}), which leads to an exponential localisation of the ground state.

\begin{figure}
	\begin{minipage}[b]{.32\linewidth}
		\includegraphics[width=\linewidth]{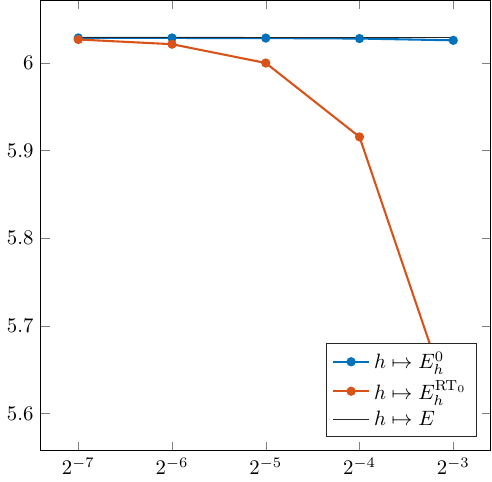}
	\end{minipage}\hfill
	\begin{minipage}[b]{.32\linewidth}
		\includegraphics[width=\linewidth]{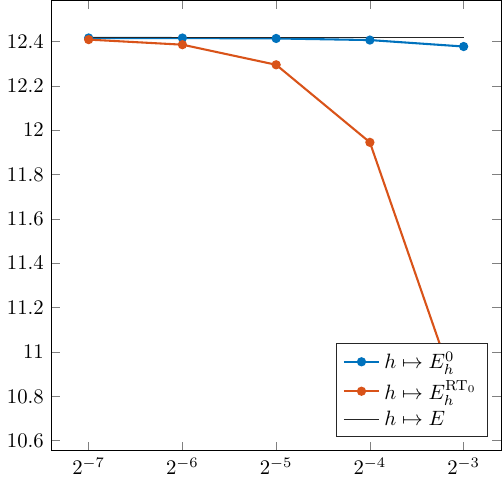}
	\end{minipage}\hfill
	\begin{minipage}[b]{.32\linewidth}
		\includegraphics[width=\linewidth]{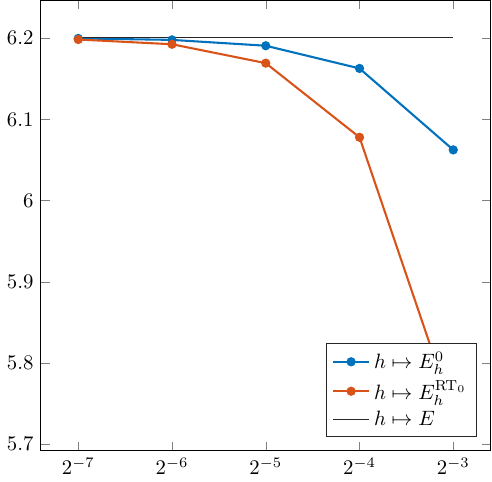}
	\end{minipage}\\[.25cm]
	\begin{minipage}[b]{.32\linewidth}
		\includegraphics[width=\linewidth]{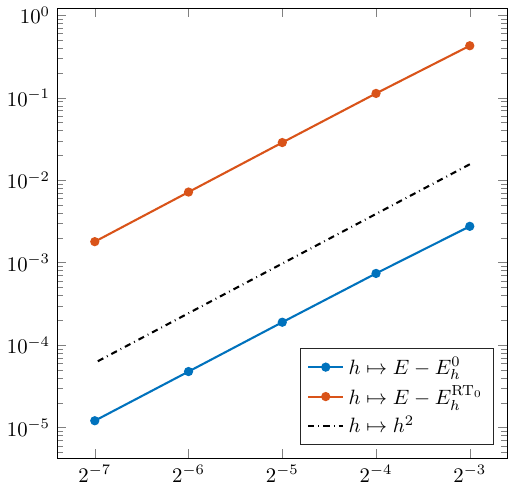}
	\end{minipage}\hfill
	\begin{minipage}[b]{.32\linewidth}
		\includegraphics[width=\linewidth]{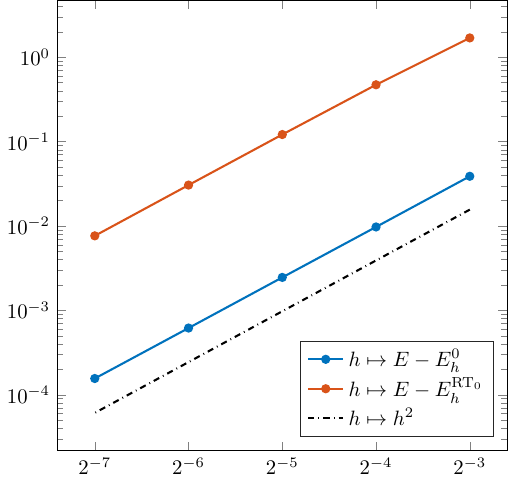}
	\end{minipage}\hfill
	\begin{minipage}[b]{.32\linewidth}
		\includegraphics[width=\linewidth]{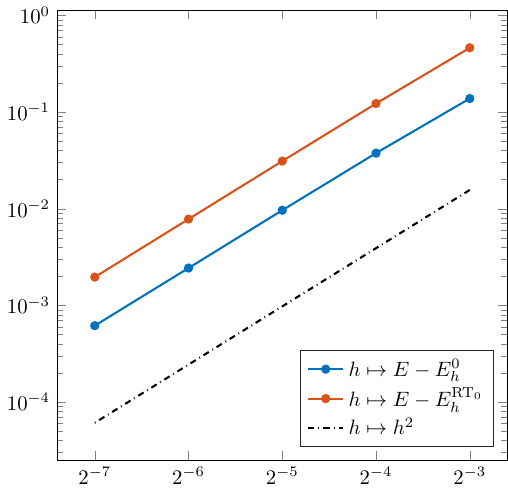}
	\end{minipage}
	\caption{First row: energy approximations \( E_h^0 \) of the modified HHO method and \( E_h^{\mathrm{RT}_0} \) of the post-processed Raviart--Thomas method (see~\cite{GHL24}) for the potentials \( V_1 \), \( V_2 \), and \( V_3 \) (from left to right). Second row: corresponding approximation errors.
	}
	\label{fig:lowbound}
\end{figure}
We consider the mesh hierarchy \( \{\mathcal{T}_h \with h = 2^{-3}, \dots, 2^{-7}\} \). To satisfy the assumption of a piecewise constant potential required in \cref{theo:lower_bound}, the potentials~\( V_1 \) and \( V_2 \) are projected onto the space of piecewise constant functions defined on a uniform Cartesian grid with mesh size \( h = 2^{-2} \). The potentials on all finer meshes are obtained via prolongation. 
The computations are carried out using the modified HHO method introduced in \cref{eq:hhoapprox0}, where the stabilisation parameter~\( \sigma \) is determined by rearranging \cref{eq:condstabilization} for \( \sigma \) and estimating \( E_h^0 \) from above using the reference energy~$E$, which is computed via the conforming $\mathcal Q^1$-finite element method (as outlined above). For the considered mesh sizes, the resulting stabilisation parameter is of order one, and it satisfies the condition in \cref{eq:condstabilization} by construction.

The first row of \cref{fig:lowbound} confirms that the energy approximation \( E_h^0 \) indeed provides guaranteed lower energy bounds, as theoretically predicted by \cref{theo:lower_bound}. For comparison, we also show the post-processed energy approximations obtained using the lowest-order Raviart--Thomas discretisation from \cite{GHL24}, which also yields guaranteed lower energy bounds.
The second row of \cref{fig:lowbound} displays the energy approximation errors for the modified HHO method and the post-processed Raviart--Thomas method. 
%Absolute values are not needed in these error plots, as the lower-bound property ensures that all energy differences are non-negative.
%
Notably, for the potentials \( V_1 \) and \( V_2 \), the modified HHO method yields significantly more accurate lower bounds (by approximately two orders of magnitude for \( V_1 \) and one and a half orders for \( V_2 \)) compared to the Raviart--Thomas method. The discrepancy arises from the post-processing step in the Raviart--Thomas method, which dominates the error in case of the smooth potentials \( V_1 \) and \( V_2 \), where the discretisation error is comparatively small.
For the rough potential \( V_3 \), the lower energy bounds provided by the modified HHO method are still more accurate than that of the post-processed Raviart--Thomas method, though the improvement is less pronounced (about a factor of three). 

\subsection{Optimal order convergence}
In the following, we investigate the convergence properties of the HHO method introduced in~\cref{eq:hhoapprox} and its modified version from~\cref{eq:hhoapprox0}.
For the corresponding numerical experiments, we consider the harmonic potential \( V_1 \) and set the particle interaction parameter to \( \kappa = 1000 \). The potential is integrated exactly using a quadrature rule of sufficiently high order.

\begin{figure}
	\begin{minipage}[b]{.32\linewidth}
		\includegraphics[width=\linewidth]{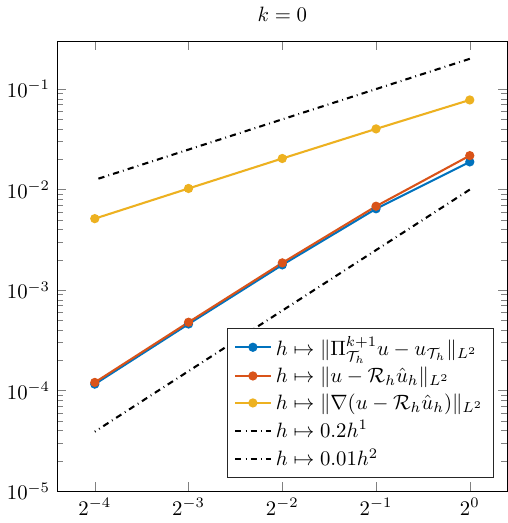}
	\end{minipage}\hfill
	\begin{minipage}[b]{.32\linewidth}
		\includegraphics[width=\linewidth]{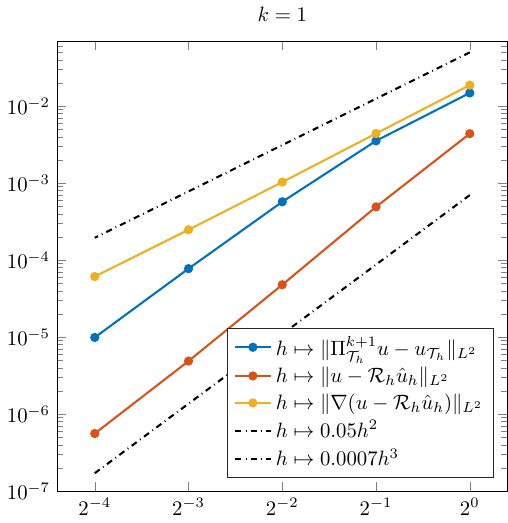}
	\end{minipage}\hfill
	\begin{minipage}[b]{.32\linewidth}
		\includegraphics[width=\linewidth]{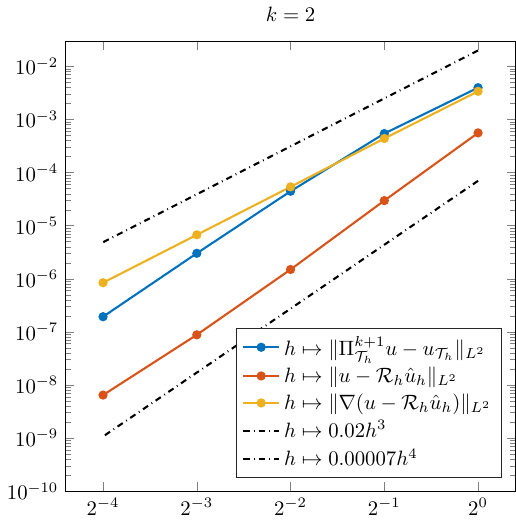}
	\end{minipage}\\[.25cm]
	\begin{minipage}[b]{.32\linewidth}
		\includegraphics[width=\linewidth]{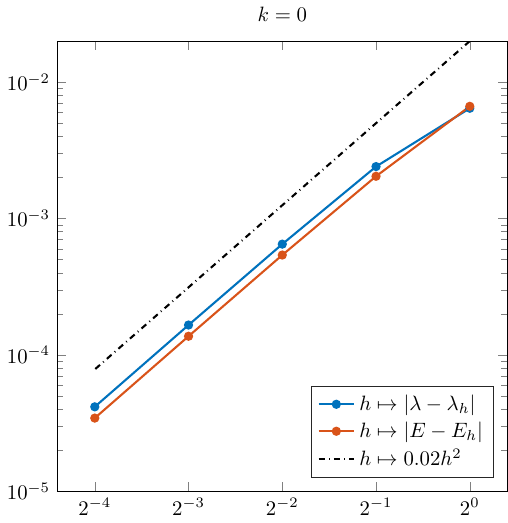}
	\end{minipage}\hfill
	\begin{minipage}[b]{.32\linewidth}
		\includegraphics[width=\linewidth]{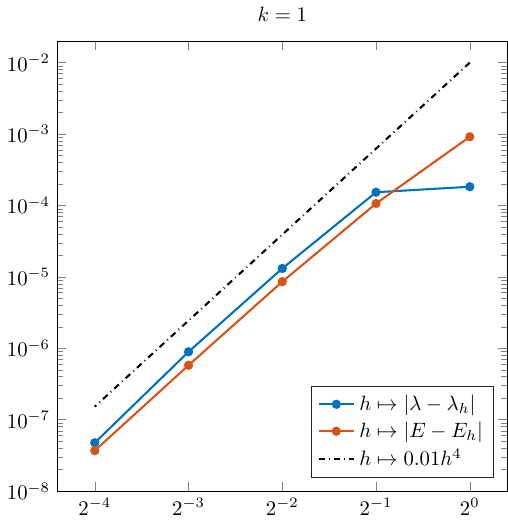}
	\end{minipage}\hfill
	\begin{minipage}[b]{.32\linewidth}
		\includegraphics[width=\linewidth]{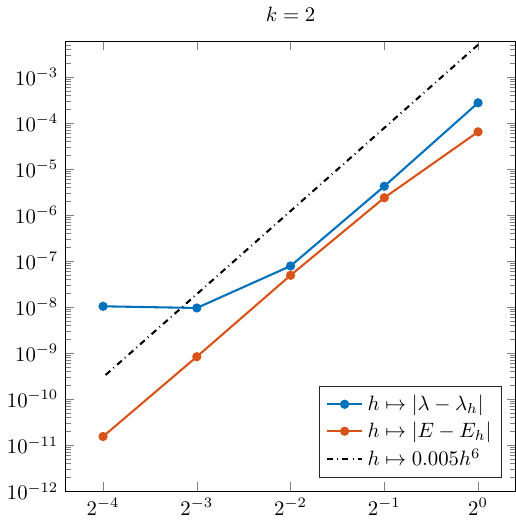}
	\end{minipage}
	\caption{First row: convergence plots of the (reconstructed) ground state approximations of the HHO method for polynomial degrees $k = 1,2,3$ (from left to right). Second row: convergence plots of the corresponding eigenvalue and energy approximations.}
	\label{fig:conv}
\end{figure}

To demonstrate the convergence of the HHO method introduced in \cref{eq:hhoapprox}, we employ the mesh hierarchy \( \{ \mathcal{T}_h \with h = 2^0, \dots, 2^{-4} \} \). In \cref{fig:conv} one observes that the HHO method exhibits optimal convergence rates, in agreement with the theoretical predictions of \cref{theo:err_est,theo:l2errorest}.
Note that, both \( \|u - u_{\mathcal{T}_h}\|_{L^2} \) and \( \|u - \mathcal{R}_h \hat{u}_h\|_{L^2} \) converge at the expected rate of \( \mathcal{O}(h^{k+2}) \); however, for polynomial degrees $k>0$, the latter error is consistently smaller by approximately one order of magnitude.
In the lower-left plot of \cref{fig:conv}, one observes that the eigenvalue approximation for the HHO method with polynomial degree \( k = 2 \) stagnates at an error level of approximately \( 10^{-8} \). This behavior is likely due to numerical effects such as finite machine precision and the stopping criteria of the nonlinear solver.

For the modified HHO method, we consider the mesh hierarchy \( \{ \mathcal{T}_h \with h = 2^0, \dots, 2^{-6} \} \). After an initial plateau in the error, which is due to the nonlinear solver for \cref{eq:hhoapprox0} not converging within the maximum number of iterations, the expected convergence rate predicted by \cref{theo:err_est0} is clearly observed.

\begin{figure}
	\includegraphics[width=.32\linewidth]{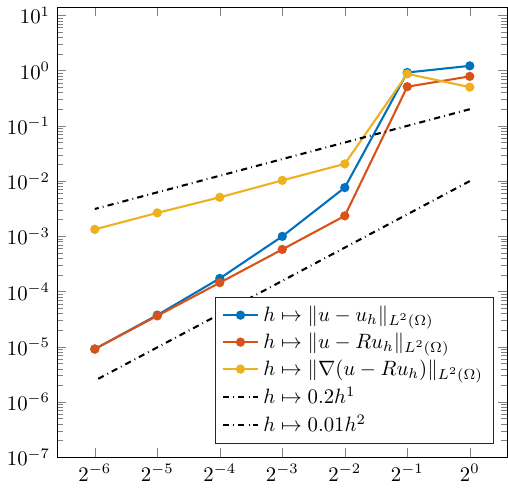}\hspace{.5cm}
	\includegraphics[width=.32\linewidth]{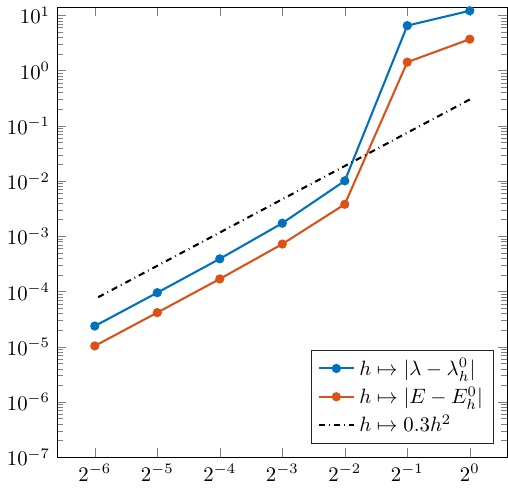}
	\caption{Left: convergence plot of the ground state approximation of the modified HHO method. Right: convergence plot of the corresponding eigenvalue and energy approximations (right).
	}
	\label{fig:convmod}
\end{figure}

\section{Conclusion}

In conclusion, we have demonstrated the effective application of a hybrid high-order (HHO) discretisation to the Gross--Pitaevskii eigenvalue problem. We have proved the optimal-order convergence both for the classical HHO ground state approximation and for a modified lowest-order variant that provides guaranteed lower bounds on the ground state energy. Notably, these bounds are obtained without any post-processing. Numerical experiments confirm that, particularly for smooth problems, the proposed method produces significantly more accurate guaranteed lower energy bounds than the post-processing-based approach~\cite{GHL24}.

\section*{Acknowledgments}
M. Hauck acknowledges funding from the Deutsche Forschungsgemeinschaft\linebreak  (DFG, German Research Foundation) -- Project-ID 258734477 -- SFB 1173. Y.~Liang was supported in part through the Royal Society University Research Fellowship (URF\textbackslash R1\textbackslash 221398, RF\textbackslash ERE\textbackslash 221047). Moreover, the authors would like to thank Ngoc Tien Tran for providing a basic implementation of the HHO method, and Andreas Rupp for fruitful discussions on the Gross--Pitaevskii problem.

\appendix
\section{Collection of frequently used bounds}

The first two results are a Poincaré inequality and a trace inequality, both stated with explicit constants.
\begin{lemma}[Poincare inequality]\label{lem:poincare}
		For all $T \in \mathcal T_h$ and any $v\in H^1(T)$ with $\int_T v\dx=0$, it holds that
	\begin{equation*}
		\|v\|_{L^2(T)} \leq \pi^{-1}h_T \|\nabla v\|_{L^2(T)}
	\end{equation*}
	with $\pi>0$ denoting the circle constant.
\end{lemma}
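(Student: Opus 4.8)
The plan is to recognise that this is precisely the Payne--Weinberger Poincaré inequality for convex domains, applied to the simplex $T$. Since every simplex $T \in \mathcal{T}_h$ is convex with $\diam(T) = h_T$, it suffices to establish that for any bounded convex domain $\Omega \subset \mathbb{R}^d$ and any $v \in H^1(\Omega)$ with $\int_\Omega v \, \mathrm{d}x = 0$ one has $\|v\|_{L^2(\Omega)} \leq \pi^{-1} \diam(\Omega)\, \|\nabla v\|_{L^2(\Omega)}$, with the circle constant $\pi$ entering through the sharp one-dimensional estimate.

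The core ingredient is the one-dimensional Wirtinger inequality: for $f \in H^1(a,b)$ with $\int_a^b f \, \mathrm{d}t = 0$ there holds $\int_a^b f^2 \, \mathrm{d}t \leq \frac{(b-a)^2}{\pi^2} \int_a^b (f')^2 \, \mathrm{d}t$. This is sharp, with equality for $f(t) = \cos\!\big(\pi(t-a)/(b-a)\big)$, and follows either from a Fourier cosine expansion or from identifying $\pi^2/(b-a)^2$ as the smallest positive Neumann eigenvalue of $-\tfrac{\mathrm{d}^2}{\mathrm{d}t^2}$ on $(a,b)$. I would prove this first, since it pins down the constant $\pi^{-1}$ and exhibits its optimality.

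To lift this to the $d$-dimensional convex setting, I would follow the dimension-reduction strategy of Payne and Weinberger. The idea is to bisect $\Omega$ repeatedly by hyperplanes into convex subdomains, choosing each cut (by an intermediate-value argument on the family of admissible hyperplanes) so that $v$ retains mean zero on both pieces. Iterating produces a partition into thin, approximately one-dimensional convex \emph{needles} whose diameters remain bounded by $\diam(\Omega)$, using the elementary fact that a hyperplane cut never increases the diameter of a convex body. On each needle the function behaves essentially like a function of the single long variable, so the one-dimensional Wirtinger inequality applies with the interval length replaced by the needle length, itself bounded by $\diam(\Omega)$. Summing the local estimates over the partition and passing to the limit then recovers the global bound with the constant $\diam(\Omega)/\pi = h_T/\pi$.

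The main obstacle is making the limiting bisection argument fully rigorous: one must control the transverse variation of $v$ on each needle and justify that the one-dimensional estimate survives the passage to the limit. This is the delicate point where the original Payne--Weinberger proof contained a gap, later repaired by Bebendorf. In practice I would either cite this completed argument or, for a self-contained treatment, replace the geometric needle decomposition by the equivalent measure-disintegration formulation, which avoids the troublesome transverse estimates while still delivering the sharp $\pi^{-1}$ constant.
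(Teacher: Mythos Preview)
Your proposal is correct and aligns with the paper's approach: the paper does not give its own proof but simply cites Bebendorf's work, which is exactly the rigorous completion of the Payne--Weinberger argument you outline. You have supplied considerably more detail than the paper itself, which treats the result as a black-box citation.
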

\begin{proof}
The proof can be found, for example, in~\cite{Bebendorf2003}.
\end{proof}
\begin{lemma}[Trace inequality]\label{lem:trace_theo}
	For all $T \in \mathcal T_h$ and any $v\in H^1(T)$ satisfying ${\int_T v\dx=0}$, it holds that
	\begin{equation*}
		\sum_{F \in \mathcal F_{\partial T}}\ell_{T,F}^{-1}\|u\|_{L^2({F})}^2 \leq C_{\mathrm{tr}} \|\nabla u\|_{L^2(T)}^2
	\end{equation*}
	with the constant $C_{\mathrm{tr}} = 1/\pi^2+2/(d\pi)>0$.
\end{lemma}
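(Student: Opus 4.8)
The plan is to convert the face integrals into volume integrals over the sub-pyramids $T_F = \operatorname{conv}\{x_T, F\}$ by means of the divergence theorem, and then to extract the two summands of $C_{\mathrm{tr}}$ from the Poincaré inequality of \cref{lem:poincare}. The starting observation is that, since the mesh is simplicial and $x_T$ is the barycenter, the family $\{T_F\}_{F \in \mathcal F_{\partial T}}$ tiles $T$ with pairwise disjoint interiors. On each $T_F$ I would apply the divergence theorem to the radial vector field $v^2(x - x_T)$, using $\operatorname{div}\bigl(v^2(x-x_T)\bigr) = d\,v^2 + 2v\,\nabla v\cdot(x-x_T)$. The key geometric point is that on every lateral facet of the pyramid (each of the form $\operatorname{conv}\{x_T, E\}$ for a facet $E$ of $F$) the field $x - x_T$ is tangential, so it is annihilated by the outward normal; only the base $F$ contributes, and there $(x-x_T)\cdot n_F = h_F$, the distance from $x_T$ to the hyperplane of $F$. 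This gives the identity
\[
\int_F v^2 \,\mathrm{d}s = \frac{1}{h_F}\int_{T_F}\bigl(d\,v^2 + 2v\,\nabla v\cdot(x - x_T)\bigr)\dx.
\]

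Next I would rewrite the weight. Since $T_F$ is a pyramid with base $F$ and apex $x_T$, one has $|T_F|_d = h_F\,|F|_{d-1}/d$, whence $\ell_{T,F}^{-1} = |T_F|_d/(|F|_{d-1}h_T^2) = h_F/(d\,h_T^2)$. Substituting this into the identity above cancels the factor $h_F$, and summing over $F \in \mathcal F_{\partial T}$ collapses the right-hand side onto integrals over all of $T$ thanks to the tiling property:
\[
\sum_{F\in\mathcal F_{\partial T}}\ell_{T,F}^{-1}\|v\|_{L^2(F)}^2 = \frac{1}{h_T^2}\int_T v^2\dx + \frac{2}{d\,h_T^2}\int_T v\,\nabla v\cdot(x - x_T)\dx.
\]

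Finally I would estimate the two terms separately, invoking the zero-mean hypothesis $\int_T v\dx = 0$. For the first term, \cref{lem:poincare} gives $\|v\|_{L^2(T)}^2 \le \pi^{-2}h_T^2\|\nabla v\|_{L^2(T)}^2$ directly, producing the $1/\pi^2$ contribution. For the second term, I would use $|x - x_T| \le \operatorname{diam}(T) = h_T$ for $x \in T$, then Cauchy--Schwarz, and once more \cref{lem:poincare} in the form $\|v\|_{L^2(T)} \le \pi^{-1}h_T\|\nabla v\|_{L^2(T)}$; this bounds the term by $2/(d\pi)\,\|\nabla v\|_{L^2(T)}^2$. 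Adding the two contributions yields exactly $C_{\mathrm{tr}} = 1/\pi^2 + 2/(d\pi)$.

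The main obstacle here is geometric bookkeeping rather than analysis: one must verify carefully that the radial field is tangent to every lateral facet so that they drop out of the boundary integral, and that the barycentric pyramids partition $T$ without overlap, so that the summation assembles cleanly into integrals over $T$. The only genuinely analytic subtlety is justifying the divergence theorem for the merely $H^1$ integrand $v^2(x-x_T)$, which I would handle by a routine density argument approximating $v$ by smooth functions.
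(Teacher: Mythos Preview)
Your argument is correct: the radial vector field $v^2(x-x_T)$, the vanishing of its normal flux on the lateral facets of each pyramid $T_F$, the identity $\ell_{T,F}^{-1}=h_F/(d\,h_T^2)$ via the pyramid volume formula, and the two applications of \cref{lem:poincare} combine to give exactly $C_{\mathrm{tr}}=1/\pi^2+2/(d\pi)$. The paper does not supply its own proof but defers to \cite[Lem.~3.1]{Tran2024} and \cite[Lem.~7.2]{Gal23}; the technique you use---divergence theorem on the barycentric sub-pyramids with the radial field---is precisely the argument found in those references, so your proposal reproduces the intended proof rather than offering an alternative route.
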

\begin{proof}
The result and its corresponding proof can be found in \cite[Lem.~3.1]{Tran2024}, which, in turn, is based on \cite[Lem.~7.2]{Gal23}.
\end{proof}

Next, we present several results relevant to the analysis of the HHO method.

\begin{lemma}[Discrete Sobolev embeddings]\label{prop:discrete:sobolev:embed}
	Let $d \in \{2,3\}$ and $q$ satisfy $1 \leq q < \infty$ if $d=2$, and $1 \leq q \leq 6$ if $d=3$. Then, for all $\hat{v}_h = (v_{\mathcal{T}_h}, v_{\mathcal{F}_h}) \in \hat{U}_h$, it holds~that
	\[
	\|v_{\mathcal{T}_h}\|_{L^q} \lesssim \|\hat{v}_h\|_{a_h}
	\]
	with hidden constant depending only on the domain, mesh regularity, $q$, $k$, and~$\sigma$.
\end{lemma}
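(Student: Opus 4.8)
The plan is to transfer the continuous Sobolev embedding $H_0^1(\Omega) \hookrightarrow L^q(\Omega)$ to the broken polynomial component $v_{\mathcal{T}_h}$ by comparing it with a conforming companion function. First I would reduce the $a_h$-norm to the canonical discrete $H^1$-norm
\[
\|\hat v_h\|_{1,h}^2 \coloneqq \|\nabla_h v_{\mathcal{T}_h}\|_{L^2}^2 + \sum_{T \in \mathcal{T}_h}\sum_{F \in \mathcal{F}_{\partial T}} \ell_{T,F}^{-1}\|v_F - v_T\|_{L^2(F)}^2,
\]
using the standard HHO norm equivalence $\|\hat v_h\|_{a_h} \approx \|\hat v_h\|_{1,h}$ (see, e.g., \cite{Pietro2020}), which follows from the definition of the reconstruction in \cref{eq:reconstruction} and of the stabilisation $s_h$, together with the trace scaling underlying \cref{lem:trace_theo}. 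Since $\hat v_h \in \hat U_h$, the boundary face values vanish, so the comparison function can be taken in $H_0^1(\Omega)$.

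Next I would introduce an averaging (Oswald-type) operator producing a conforming companion $v_h^c \in H_0^1(\Omega) \cap \mathcal P^{k+1}(\mathcal{T}_h)$ enjoying two properties: the $H^1$-stability $\|\nabla v_h^c\|_{L^2} \lesssim \|\hat v_h\|_{1,h}$, and the local approximation bound $\|v_T - v_h^c\|_{L^2(T)} \lesssim h_T\, s_T$, where $s_T^2 \coloneqq \sum_{F \in \mathcal{F}_{\partial T}}\ell_{T,F}^{-1}\|v_F - v_T\|_{L^2(F)}^2$ and $\sum_{T} s_T^2 \le \|\hat v_h\|_{1,h}^2$. Applying the continuous embedding to $v_h^c$ yields $\|v_h^c\|_{L^q} \lesssim \|\nabla v_h^c\|_{L^2} \lesssim \|\hat v_h\|_{1,h}$, so by the triangle inequality it remains only to control the nonconforming remainder $\|v_{\mathcal{T}_h} - v_h^c\|_{L^q}$.

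For this remainder I would use that $w_T \coloneqq (v_{\mathcal{T}_h} - v_h^c)|_T$ is a polynomial of degree at most $k+1$ and combine the inverse inequality $\|w_T\|_{L^q(T)} \lesssim h_T^{d/q - d/2}\|w_T\|_{L^2(T)}$ with the local approximation bound to get $\|w_T\|_{L^q(T)} \lesssim h_T^{\,\alpha}\, s_T$ for $\alpha \coloneqq 1 + d/q - d/2$. Summing gives $\|v_{\mathcal{T}_h} - v_h^c\|_{L^q}^q \lesssim \sum_T h_T^{\,\alpha q} s_T^q$, and I would observe that $\alpha \ge 0$ holds precisely in the admissible Sobolev range ($q < \infty$ for $d=2$, $q \le 6$ for $d=3$). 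In the critical case $q = 2d/(d-2)$ one has $\alpha = 0$, and the discrete embedding $\ell^2 \hookrightarrow \ell^q$ gives $\sum_T s_T^q \le \big(\sum_T s_T^2\big)^{q/2} \le \|\hat v_h\|_{1,h}^q$, hence $\|v_{\mathcal{T}_h} - v_h^c\|_{L^q} \lesssim \|\hat v_h\|_{1,h}$; the subcritical cases follow identically, the spare positive powers of $h_T$ being absorbed by Hölder's inequality. Combining with the companion estimate and the norm equivalence then completes the proof.

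The main obstacle I anticipate is the $L^q$- (rather than $L^2$-) control of the nonconforming remainder: the inverse inequality injects negative powers of $h_T$, and the entire argument hinges on the exponent $\alpha = 1 + d/q - d/2$ remaining nonnegative, which is exactly what pins the result to the continuous Sobolev range and forces the separate treatment of the critical exponent via the discrete $\ell^2 \hookrightarrow \ell^q$ embedding. The secondary technical point is constructing the averaging operator with the sharp local approximation property and keeping track, under shape regularity, of the scaling relations between $\ell_{T,F}$, $h_F$, and $h_T$ that make the jump penalties and the $L^2$-approximation bound compatible.
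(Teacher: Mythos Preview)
Your argument is correct. The paper itself does not give a self-contained proof: it simply invokes \cite[Prop.~5.4]{Pietro2017} for the discrete Sobolev inequality in the HHO norm and \cite[Lem.~2.18]{Pietro2020} for the equivalence $\|\cdot\|_{a_h}\approx\|\cdot\|_{1,h}$. The route behind those references is different from yours: it passes through the discontinuous Galerkin norm (bounding element--element jumps by the HHO face--element jumps) and then uses the broken Sobolev embedding for piecewise polynomials established via the $BV(\Omega)\hookrightarrow L^{d/(d-1)}(\Omega)$ embedding and an iteration in $q$, rather than through a conforming companion. Your approach is more elementary in that it reduces everything to the \emph{continuous} embedding $H_0^1(\Omega)\hookrightarrow L^q(\Omega)$ plus inverse estimates, at the price of constructing the averaging operator and tracking the exponent $\alpha=1+d/q-d/2$; the $BV$ route avoids the companion construction but imports a deeper analytic ingredient. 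One small point to tighten: the Oswald-type local bound $\|v_T-v_h^c\|_{L^2(T)}\lesssim h_T\,s_T$ actually involves jumps on a patch around $T$, not just $\mathcal F_{\partial T}$, so $s_T$ should be defined on the patch; this is harmless since shape regularity keeps the overlap uniformly bounded and $\sum_T s_T^2\lesssim \|\hat v_h\|_{1,h}^2$ still holds.
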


\begin{proof}
	The proof of this result can be deduced by combining \cite[Prop.~5.4]{Pietro2017} and \cite[Lem.~2.18]{Pietro2020}.
\end{proof}
\begin{lemma}[Moment-preserving smoothing operator]\label{prop:prese_inter}
	There exists a linear operator $\mathcal{J}_h  \colon \allowbreak \hat{U}_h \to H_0^1(\Omega)$ satisfying, for all $\hat{v}_h \in \hat{U}_h$, that
	\begin{equation*}
		\mathcal I_h\circ \mathcal J_h\hat{v}_h = \hat{v}_h,
	\end{equation*}
	which implies the following orthogonality relations:
	\[
	v_{\mathcal{T}_h} - \mathcal{J}_h \hat{v}_h \perp_{L^2} \mathcal{P}^{k+1}(\mathcal{T}_h), 
	\quad 
	\nabla_h(\mathcal{J}_h \hat{v}_h - \mathcal{R}_h \hat{v}_h) \perp_{L^2} \nabla_h \mathcal{P}^{k+1}(\mathcal{T}_h).
	\]
Moreover, the operator satisfies, for all $ \hat v_h \in \hat U_h$, the stability estimate
	\begin{equation*}
		\|\mathcal J_h\hat{v}_h\|_{H^1}\lesssim \|\hat{v}_h\|_{a_h},
	\end{equation*}
	and, for any $v \in H^2(\Omega)\cap H^1_0(\Omega)$, the following approximation properties hold:
	\begin{equation*}
		\|\nabla(\mathcal J_h\mathcal I_hv-v)\|_{L^2}\lesssim h \|v\|_{H^2},\quad \|\mathcal J_h\mathcal I_hv-v\|_{L^2}\lesssim h^2 \|v\|_{H^2},
	\end{equation*}
	where hidden constants depend only on the domain, mesh regularity, $k$, and~$\sigma$.
\end{lemma}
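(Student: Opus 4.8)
The plan is to construct $\mathcal J_h$ explicitly as a stable, moment-preserving \emph{conforming companion operator} and then read off all stated properties from the defining identity $\mathcal I_h\circ\mathcal J_h=\mathrm{id}$. I would build $\mathcal J_h\hat v_h$ inside a fixed continuous Lagrange space $S_h\subset H^1_0(\Omega)$ of sufficiently high degree (e.g.\ degree $k+2$ enriched with element and face bubbles) in three successive steps. Step one secures $H^1_0$-conformity by an Oswald/Cl\'ement-type nodal averaging of the element components $(v_T)_T$, with all boundary nodes set to zero; this yields a continuous, $H^1$-stable function with optimal approximation that does not yet match any moments. Step two corrects the face moments: on each interior face $F$ we add face-bubble functions supported in the patch of the two elements sharing $F$ and vanishing on every other mesh face, with coefficients chosen so that $\Pi_F^k$ of the resulting trace equals $v_F$ (on boundary faces $v_F=0$ is already consistent). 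Step three corrects the element moments by adding, on each $T$, element bubbles vanishing on $\partial T$ chosen so that $\Pi_T^{k+1}$ of the result equals $v_T$. The ordering is essential, since element bubbles affect neither continuity nor face traces and face bubbles only alter the moments on their own face; hence the steps do not interfere and $\mathcal I_h\circ\mathcal J_h=\mathrm{id}$ holds by construction, with linearity immediate.

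With the moment-preservation identity in hand the two orthogonality relations follow at once. The relation $v_{\mathcal T_h}-\mathcal J_h\hat v_h\perp_{L^2}\mathcal P^{k+1}(\mathcal T_h)$ merely restates $\Pi_{\mathcal T_h}^{k+1}\mathcal J_h\hat v_h=v_{\mathcal T_h}$. For the gradient relation I would fix $T$ and $\varphi\in\mathcal P^{k+1}(T)$, integrate $(\nabla\mathcal J_h\hat v_h,\nabla\varphi)_T$ by parts, and use $\Delta\varphi\in\mathcal P^{k-1}(T)\subset\mathcal P^{k+1}(T)$ together with $\partial_n\varphi|_F\in\mathcal P^{k}(F)$ and the matched element and face moments to reduce it to $-(v_T,\Delta\varphi)_T+(v_{\partial T},\partial_n\varphi)_{\partial T}$, which is exactly the definition \cref{eq:reconstruction} of $(\nabla\mathcal R_h\hat v_h,\nabla\varphi)_T$; hence $\nabla_h(\mathcal J_h\hat v_h-\mathcal R_h\hat v_h)\perp_{L^2}\nabla_h\mathcal P^{k+1}(\mathcal T_h)$.

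For stability I would exploit this second orthogonality through the Pythagorean split
\[
\|\nabla_h\mathcal J_h\hat v_h\|_{L^2}^2=\|\nabla_h\mathcal R_h\hat v_h\|_{L^2}^2+\|\nabla_h(\mathcal J_h\hat v_h-\mathcal R_h\hat v_h)\|_{L^2}^2,
\]
so that the first term is bounded by $\|\hat v_h\|_{a_h}^2$ and it remains to control the nonconforming remainder. The key estimate $\|\nabla_h(\mathcal J_h\hat v_h-\mathcal R_h\hat v_h)\|_{L^2}\lesssim|\hat v_h|_{s_h}$ comes from the local construction: the moment discrepancies that the bubbles in Steps two and three correct are precisely $\Pi_F^k(v_{\partial T}-(\mathcal R_h\hat v_h)_T)$ on faces and $v_T-(\mathcal R_h\hat v_h)_T$ in elements, i.e.\ the quantities measured by $s_h$, while the averaging jumps of Step one are likewise controlled by these same discrepancies after a standard trace and scaling argument. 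The full $H^1$-bound then follows because $\mathcal J_h\hat v_h\in H^1_0(\Omega)$, so Friedrichs' inequality gives $\|\mathcal J_h\hat v_h\|_{L^2}\lesssim\|\nabla\mathcal J_h\hat v_h\|_{L^2}$.

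Finally, the approximation estimates follow from stability plus the interpolation error of the stabilisation. For the $H^1$-estimate I would split $\|\nabla(\mathcal J_h\mathcal I_hv-v)\|_{L^2}\le\|\nabla_h(\mathcal J_h\mathcal I_hv-\mathcal G_hv)\|_{L^2}+\|\nabla_h(\mathcal G_hv-v)\|_{L^2}$, using $\mathcal R_h\mathcal I_hv=\mathcal G_hv$ from \cref{eq:keyidentity}; the second term is $\lesssim h\|v\|_{H^2}$ by the elliptic-projection error, and the first equals $\|\nabla_h(\mathcal J_h-\mathcal R_h)\mathcal I_hv\|_{L^2}\lesssim|\mathcal I_hv|_{s_h}\lesssim h\|v\|_{H^2}$ by the remainder bound above and the classical optimal bound on the stabilisation of the interpolant. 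For the $L^2$-estimate I would write $\|\mathcal J_h\mathcal I_hv-v\|_{L^2}\le\|\mathcal J_h\mathcal I_hv-\Pi_{\mathcal T_h}^{k+1}v\|_{L^2}+\|\Pi_{\mathcal T_h}^{k+1}v-v\|_{L^2}$, bound the second term by $h^2\|v\|_{H^2}$, and note that by moment preservation $\mathcal J_h\mathcal I_hv-\Pi_{\mathcal T_h}^{k+1}v$ has vanishing mean on each element, so \cref{lem:poincare} supplies an extra factor $h$ applied to its broken gradient, which is $\lesssim h\|v\|_{H^2}$ by the $H^1$-estimate just proved together with the gradient approximation of $\Pi_{\mathcal T_h}^{k+1}$. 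I expect the genuine obstacle to be the explicit \emph{stable} construction of the companion operator, namely guaranteeing that the bubble corrections enforcing moment preservation stay uniformly controlled by $|\hat v_h|_{s_h}$ under mesh refinement; the orthogonality relations and the approximation bounds are then essentially bookkeeping, and for the detailed construction one may also invoke the conforming companion operators available in the HHO and nonconforming finite element literature.
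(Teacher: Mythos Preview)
Your proposal is correct and follows essentially the same approach as the paper, which simply defers to the conforming companion operator construction in \cite[Sec.~4.3]{ErnZanotti2020} and the stability/approximation arguments in \cite[Lem.~3.5]{LiangNgoc2025}; your three-step averaging-plus-bubble construction, the derivation of the orthogonality relations from $\mathcal I_h\circ\mathcal J_h=\mathrm{id}$ via integration by parts, and the Pythagorean stability argument are precisely what those references supply. Your identification of the stable bubble correction (controlled by $|\hat v_h|_{s_h}$) as the genuine technical point is accurate.
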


\begin{proof}
	For the construction of such an operator and the corresponding analysis, we refer, for example, to 	\cite[Sec.~4.3]{ErnZanotti2020}. The proof of the stability and approximation properties uses similar arguments as \cite[Lem. 3.5]{LiangNgoc2025}.
\end{proof}

\begin{lemma}[Discrete $L^{\infty}$-bound]\label{lem:linf_bound}
	Assume that $V \in H^1(\mathcal T_h)$. Then, for any discrete ground state $\hat{u}_h = (u_{\mathcal{T}_h}, u_{\mathcal{F}_h})$ of \cref{eq:hhoapprox}, both $\|\mathcal{J}_h \hat{u}_h\|_{L^\infty}$ and $\|u_{\mathcal{T}_h}\|_{L^\infty}$ are uniformly bounded. 
	The same holds for the modified HHO approximation $\hat{u}_h^0 = (u_{\mathcal{T}_h}^0, u_{\mathcal{F}_h}^0)$ of \cref{eq:hhoapprox0} and its smoothed version $\mathcal{J}_h \hat{u}_h^0$.
\end{lemma}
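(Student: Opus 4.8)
The plan is to exploit that any discrete ground state is, by its Euler--Lagrange equations, the HHO approximation of a \emph{linear} Poisson problem whose right-hand side is uniformly bounded in $L^2$: the continuous solution of this Poisson problem is then uniformly $L^\infty$-bounded by elliptic regularity, and this bound is transferred to the discrete quantities by an inverse-estimate argument powered by the superconvergent $L^2$-estimate of the HHO method. Crucially, this route is self-contained and does \emph{not} use the nonlinear a priori rates of \cref{theo:err_est,theo:l2errorest}, which is necessary since \cref{lem:linf_bound} is invoked in the proof of \cref{theo:err_est0} before those rates are available for the modified scheme. Throughout, I take as given the uniform bounds $\|\hat{u}_h\|_{a_h}\lesssim 1$, $\lambda_h\lesssim 1$, and $\|u_{\mathcal{T}_h}\|_{L^6}\lesssim 1$ (the latter via \cref{prop:discrete:sobolev:embed}), which are established in the plain-convergence step preceding this lemma, as in \cref{theo:plain_conver}.

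First I would read \cref{eq:eigen_pro:discrete} as the statement that $\hat{u}_h$ is the HHO approximation of the Poisson problem $-\Delta u_h^c = f_h$ with $f_h \coloneqq \lambda_h u_{\mathcal{T}_h} - V u_{\mathcal{T}_h} - \kappa u_{\mathcal{T}_h}^3$, exactly as in \cref{eq:ustar}. Using $V\in L^\infty(\Omega)$ together with $\|u_{\mathcal{T}_h}\|_{L^6}\lesssim 1$ gives $\|f_h\|_{L^2}\lesssim 1$ uniformly, so that elliptic regularity on the convex domain $\Omega$ yields $u_h^c\in H^2(\Omega)\cap H^1_0(\Omega)$ with $\|u_h^c\|_{H^2}\lesssim 1$, and hence $\|u_h^c\|_{L^\infty}\lesssim 1$ by the Sobolev embedding $H^2(\Omega)\hookrightarrow\mathcal{C}^0(\overline{\Omega})$, valid for $d\in\{2,3\}$.

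Next I would transfer this $L^\infty$-bound to the smoothed function $\mathcal{J}_h\hat{u}_h$ from \cref{prop:prese_inter}. The key estimate is $\|\mathcal{J}_h\hat{u}_h - u_h^c\|_{L^2}\lesssim h^2$, which I obtain by splitting $\mathcal{J}_h\hat{u}_h - u_h^c = \mathcal{J}_h(\hat{u}_h - \mathcal{I}_h u_h^c) + (\mathcal{J}_h\mathcal{I}_h u_h^c - u_h^c)$: the second summand is $\lesssim h^2\|u_h^c\|_{H^2}$ by the approximation property of $\mathcal{J}_h\mathcal{I}_h$ in \cref{prop:prese_inter}, while for the first I use the orthogonality $\Pi_{\mathcal{T}_h}^{k+1}\mathcal{J}_h\hat{v}_h = v_{\mathcal{T}_h}$ from \cref{prop:prese_inter}, the classical HHO energy estimate $\|\hat{u}_h-\mathcal{I}_h u_h^c\|_{a_h}\lesssim h\|u_h^c\|_{H^2}$, and the superconvergent estimate $\|u_{\mathcal{T}_h}-\Pi_{\mathcal{T}_h}^{k+1}u_h^c\|_{L^2}\lesssim h^2\|u_h^c\|_{H^2}$ (both from \cite[Thm.~2.27~\&~2.28]{Pietro2020}, the latter using the $H^2$-regularity of the dual problem on the convex $\Omega$). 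Comparing now with a conforming Lagrange interpolant $I_h^c u_h^c$ of $u_h^c$ in the $H^1_0$-conforming finite element space into which $\mathcal{J}_h$ maps, I apply the global inverse inequality $\|\cdot\|_{L^\infty}\lesssim h^{-d/2}\|\cdot\|_{L^2}$ to the piecewise polynomial $\mathcal{J}_h\hat{u}_h - I_h^c u_h^c$, obtaining $\|\mathcal{J}_h\hat{u}_h - I_h^c u_h^c\|_{L^\infty}\lesssim h^{2-d/2}$, which tends to zero for $d\in\{2,3\}$. Since $\|I_h^c u_h^c\|_{L^\infty}\lesssim\|u_h^c\|_{L^\infty}\lesssim 1$, this gives $\|\mathcal{J}_h\hat{u}_h\|_{L^\infty}\lesssim 1$. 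The bound on the element component then follows for free from $u_{\mathcal{T}_h}=\Pi_{\mathcal{T}_h}^{k+1}\mathcal{J}_h\hat{u}_h$ and the element-wise (hence uniform) $L^\infty$-stability of $\Pi_{\mathcal{T}_h}^{k+1}$, yielding $\|u_{\mathcal{T}_h}\|_{L^\infty}\lesssim 1$.

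Finally, the modified scheme is handled identically: by \cref{eq:eigen_pro:discrete0}, $\hat{u}_h^0$ is the HHO approximation of a Poisson problem with right-hand side $f_h^0\coloneqq \lambda_h^0 u_{\mathcal{T}_h}^0 - V u_{\mathcal{T}_h}^0 - \tfrac{\kappa}{2}\big((\Pi_{\mathcal{T}_h}^0 u_{\mathcal{T}_h}^0)^2 u_{\mathcal{T}_h}^0 + \Pi_{\mathcal{T}_h}^0((u_{\mathcal{T}_h}^0)^2\Pi_{\mathcal{T}_h}^0 u_{\mathcal{T}_h}^0)\big)$, which remains uniformly $L^2$-bounded because $\Pi_{\mathcal{T}_h}^0$ is $L^6$-stable and $\|u_{\mathcal{T}_h}^0\|_{L^6}\lesssim 1$; the rest is verbatim. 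The main obstacle, and the reason the proof cannot rely on $L^2$- or $H^1$-convergence alone, is precisely this $L^\infty$-transfer: the inverse inequality loses a factor $h^{-d/2}$, so a uniform bound is obtained only because the HHO element unknown converges to $\Pi_{\mathcal{T}_h}^{k+1}u_h^c$ in $L^2$ at the \emph{improved} rate $h^2$, which beats $h^{-d/2}$ for $d\le 3$. A secondary point to keep track of is that the global inverse inequality requires the mesh hierarchy to be quasi-uniform, which holds for the uniformly refined meshes considered here.
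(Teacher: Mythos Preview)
Your argument is correct and shares the paper's core strategy: interpret the discrete ground state as the HHO approximation of a linear Poisson problem with uniformly $L^2$-bounded source, invoke $H^2$-regularity and the Sobolev embedding $H^2\hookrightarrow L^\infty$ for the continuous solution $u_h^c$, and then transfer the bound to the discrete objects via the superconvergent $L^2$-estimate $\|u_{\mathcal{T}_h}-\Pi_{\mathcal{T}_h}^{k+1}u_h^c\|_{L^2}\lesssim h^2$ combined with an inverse inequality losing $h^{-d/2}$, which is harmless for $d\le 3$.

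The route diverges in the order and in the treatment of $\mathcal J_h\hat u_h$. The paper first bounds $\|u_{\mathcal T_h}\|_{L^\infty}$ and $\|\mathcal R_h\hat u_h\|_{L^\infty}$ directly by comparing to $\Pi_{\mathcal T_h}^{k+1}u_h^c$ elementwise, and only then controls $\|\mathcal J_h\hat u_h\|_{L^\infty}$ by opening up the explicit construction of $\mathcal J_h$ from \cite[Sec.~4.3]{ErnZanotti2020}, arriving at a bound in terms of $\|\mathcal R_h\hat u_h\|_{L^\infty}$, $\|\mathcal R_h\hat u_h-u_{\mathcal T_h}\|_{L^\infty}$, and $h^{-(d-2)/2}|\hat u_h|_{s_h}$. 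You instead bound $\|\mathcal J_h\hat u_h\|_{L^\infty}$ first, using only the abstract properties of $\mathcal J_h$ listed in \cref{prop:prese_inter} (moment preservation, $H^1$-stability, and the approximation property of $\mathcal J_h\mathcal I_h$) together with an auxiliary Lagrange interpolant, and then recover $\|u_{\mathcal T_h}\|_{L^\infty}$ from $u_{\mathcal T_h}=\Pi_{\mathcal T_h}^{k+1}\mathcal J_h\hat u_h$. Your route is cleaner in that it does not unpack the internal structure of the smoother; the price is the extra assumption that the range of $\mathcal J_h$ is a piecewise polynomial space (so that the inverse inequality applies to $\mathcal J_h\hat u_h-I_h^c u_h^c$), a fact not stated in \cref{prop:prese_inter} but true for the Ern--Zanotti construction. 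Both proofs implicitly rely on quasi-uniformity of $\{\mathcal T_h\}$ to trade the global $h^2$-estimate against the local $h_T^{-d/2}$ inverse factor, which you flag explicitly.
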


\begin{proof}
	We proceed as in the proof of \cref{theo:plain_conver}, observing that \(\hat{u}_h\) can be interpreted as the HHO approximation of the solution \(u_h^c \in H^2(\Omega) \cap H_0^1(\Omega)\) to the auxiliary Poisson problem \cref{eq:ustar}. Recall that the \(H^2\)-norm of \(u_h^c\) is uniformly bounded.  
The improved \(L^2\)-error estimate for the HHO method, as established for example in \cite[Thm.~2.32 \&~2.33]{Pietro2020}, then implies that
	\begin{equation*}
		\|\mathcal{R}_h\hat{u}_h-u_h^c\|_{L^2} + \|\Pi_{\mathcal{T}_h}^{k+1}u_h^c - u_{\mathcal{T}_h}\|_{L^2}\lesssim h^2.
	\end{equation*}
Using a comparison between the discrete \( L^2 \)- and \( L^\infty \)-norms, the stability of the \( L^2 \)-projection, and the uniform \( L^\infty \)-bound for \( u_h^c \), we obtain for any \( T \in \mathcal{T}_h \) that
	\begin{align*}
		\|u_{\mathcal{T}_h}\|_{L^{\infty}(T)} &\leq \|\Pi_{\mathcal{T}_h}^{k+1}u_h^c - u_{\mathcal{T}_h}\|_{L^{\infty}(T)} + \|\Pi_{\mathcal{T}_h}^{k+1}u_h^c\|_{L^{\infty}(T)}\\
		&\lesssim h_T^{-d/2}\|\Pi_{\mathcal{T}_h}^{k+1}u_h^c - u_{\mathcal{T}_h}\|_{L^2(T)} + h_T^{-d/2}\|u_h^c\|_{L^{2}(T)}\\ &\lesssim h_T^{(4-d)/2} + \|u_h^c\|_{L^{\infty}(T)}\lesssim 1.
	\end{align*}
	Similarly arguments also show that  $\mathcal{R}_h\hat{u}_h$ is uniformly $L^{\infty}$-bounded.
	
Next, we show that $\mathcal J_h \hat{u}_h$ is uniformly $L^{\infty}$-bounded. Using again the comparison result between discrete norms, we obtain for any \(T \in \mathcal{T}_h\) that
	\begin{align*}
		\|\mathcal J_h\hat{u}_h\|_{L^{\infty}(T)} & \lesssim h_T^{-d/2} \|\mathcal J_h\hat{u}_h\|_{L^{2}(T)}\\
		&\lesssim h_T^{-d/2} \sum_{T'\cap T\neq \emptyset} \Big\{\|\mathcal{R}_h\hat{u}_h \|_{L^2(T')} + \|\mathcal{R}_h\hat{u}_h-u_{\mathcal{T}_h} \|_{L^2(T')} \\[-1.5ex]
		 & \ \ \ \ \ \ \ \qquad \quad \qquad \ \ +\sum_{F\in\mathcal{F}_{\partial T'}}h_F^{1/2}\|\Pi_{F}^k(\mathcal{R}_h\hat{u}_h-u_{\mathcal{F}_h})\|_{L^2(F)}\Big\}\\
		&\lesssim \|\mathcal{R}_h\hat{u}_h \|_{L^{\infty}} + \|\mathcal{R}_h\hat{u}_h-u_{\mathcal{T}_h} \|_{L^{\infty}} + h^{-(d-2)/2}|\hat{u}_h|_{s_h} \lesssim 1,
	\end{align*}
	where the second inequality follows from the specific construction of \( \mathcal{J}_h \) in \cite[Sec.~4.3]{ErnZanotti2020}, and arguments similar to those in \cite[Lem.~3.5]{LiangNgoc2025}. To derive the last inequality, we use that \( |\hat{u}_h|_{s_h} \lesssim h \|u_h^c\|_{H^2} \) (see \cref{theo:err_est} or \cite[Thm.~2.28]{Pietro2020}). This leads the uniform \( L^{\infty} \)-bound for \( \mathcal{J}_h \hat{u}_h \). The corresponding proof for the modified HHO approximation \( \hat{u}_h^0 \) is analogous and is therefore omitted for brevity.
\end{proof}

\begin{proposition}\label{eq:JhIh_linf}
For any \(v \in H^2(\Omega) \cap H_0^1(\Omega)\), it holds that
	\begin{align*}
		\|\mathcal J_h\mathcal{I}_hv\|_{L^{\infty}} 
		\lesssim \|\mathcal{R}_h\mathcal{I}_hv \|_{L^{\infty}} + \|\mathcal{R}_h\mathcal{I}_hv-\Pi_{\mathcal{T}_h}^{k+1}v \|_{L^{\infty}} + h^{-(d-2)/2}|\mathcal{I}_hv|_{s_h} \lesssim \|v\|_{H^2}.
	\end{align*}
\end{proposition}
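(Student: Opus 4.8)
The plan is to prove the two inequalities in the chain separately. The first inequality is essentially a step already carried out in the proof of \cref{lem:linf_bound}: there the same local bound is derived for the discrete ground state $\hat u_h$, using the explicit construction of $\mathcal J_h$ from \cite[Sec.~4.3]{ErnZanotti2020} together with the local comparison between the discrete $L^\infty$- and $L^2$-norms and the specific structure of $\mathcal J_h$ (which couples $\mathcal R_h$-reconstructions on a patch with face jump contributions controlled by $|\cdot|_{s_h}$). Replacing $\hat u_h$ by $\mathcal I_h v$ — whose element component is $\Pi_{\mathcal T_h}^{k+1} v$ and whose face component is $\Pi_{\mathcal F_h}^k v$ — and repeating that argument verbatim yields exactly the first bound. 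I would therefore simply refer to that computation rather than reproduce it.

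For the second inequality I would bound each of the three terms by $\|v\|_{H^2}$, the key observation being the operator identity \cref{eq:keyidentity}, which gives $\mathcal R_h \mathcal I_h v = \mathcal G_h v$, so the first two terms involve the elliptic projection. For the first term I write $\|\mathcal G_h v\|_{L^\infty} \le \|\mathcal G_h v - \Pi_{\mathcal T_h}^{k+1} v\|_{L^\infty} + \|\Pi_{\mathcal T_h}^{k+1} v\|_{L^\infty}$; the second summand is controlled via $L^\infty$-stability of $\Pi_{\mathcal T_h}^{k+1}$ on shape-regular meshes and the Sobolev embedding $H^2(\Omega) \hookrightarrow \mathcal C^0(\overline{\Omega})$ (valid for $d \in \{2,3\}$), giving $\lesssim \|v\|_{L^\infty} \lesssim \|v\|_{H^2}$, while the first summand is precisely the middle term of the chain and is handled next.

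For the middle term I would apply the local inverse inequality $\|\cdot\|_{L^\infty(T)} \lesssim h_T^{-d/2}\|\cdot\|_{L^2(T)}$ to $\mathcal G_h v - \Pi_{\mathcal T_h}^{k+1} v$, then use the triangle inequality together with the $L^2$-approximation estimates $\|\mathcal G_h v - v\|_{L^2} \lesssim h^2\|v\|_{H^2}$ and $\|v - \Pi_{\mathcal T_h}^{k+1} v\|_{L^2} \lesssim h^2\|v\|_{H^2}$, which produces the factor $h^{(4-d)/2} \lesssim 1$. For the stabilization term I would use $|\mathcal I_h v|_{s_h} \lesssim \|\nabla_h(v - \mathcal G_h v)\|_{L^2} \lesssim h\|v\|_{H^2}$ — the first estimate being the stabilization bound established in the proof of \cref{theo:lower_bound} and the second the $H^1$-approximation property of the elliptic projection — so that $h^{-(d-2)/2}|\mathcal I_h v|_{s_h} \lesssim h^{(4-d)/2}\|v\|_{H^2} \lesssim \|v\|_{H^2}$.

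The only point requiring care is the bookkeeping of the negative powers of $h$ introduced by the inverse inequalities against the approximation rates: since $d \le 3$, in every case one is left with the nonnegative power $h^{(4-d)/2} \ge h^{1/2}$, so the three terms are not merely bounded but in fact vanish as $h \to 0$. Beyond reusing the construction of $\mathcal J_h$ and the standard approximation properties of $\mathcal G_h$ and $\Pi_{\mathcal T_h}^{k+1}$ already in play, there is no genuine analytical obstacle; the main subtlety is verifying that the approximation orders dominate the inverse-estimate scalings uniformly in $d \in \{2,3\}$.
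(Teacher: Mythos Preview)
Your proposal is correct and follows essentially the same approach as the paper. The paper in fact states this proposition without proof, placing it immediately after \cref{lem:linf_bound}; the intended argument is precisely what you outline---reuse the local $L^\infty$-bound for $\mathcal J_h$ derived there with $\hat u_h$ replaced by $\mathcal I_h v$, then bound the three resulting terms via \cref{eq:keyidentity}, the local inverse inequality combined with the $h^2$ approximation properties of $\mathcal G_h$ and $\Pi_{\mathcal T_h}^{k+1}$, and the stabilization estimate from the proof of \cref{theo:lower_bound}, each time exploiting $d\le 3$ so that $h^{(4-d)/2}\lesssim 1$.
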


\begin{lemma}[Quadrature error]\label{lem:modi_1}
For any \(\hat{v}_h = (v_{\mathcal{T}_h}, v_{\mathcal{F}_h})\), \(\hat{w}_h = (w_{\mathcal{T}_h}, w_{\mathcal{F}_h}) \in \hat{U}_h\), we have the following two estimates:
	\begin{align}\label{eq:quaderror1}
		\begin{split}
				&|((\Pi_{\mathcal{T}_h}^0{v}_{\mathcal{T}_h})^2 -v_{\mathcal{T}_h}^2 , v_{\mathcal{T}_h}w_{\mathcal{T}_h})_{L^2}|\\&\; \lesssim  h^2\|v_{\mathcal{T}_h}\|_{L^{\infty}}\|\nabla \mathcal J_h\hat{v}_h\|_{L^2} \big(\|v_{\mathcal{T}_h}\|_{L^{\infty}}\|\nabla \mathcal J_h\hat{w}_h\|_{L^2} + \|\nabla \mathcal J_h\hat{v}_h\|_{L^{2}}\|\mathcal J_h\hat{w}_h\|_{L^{\infty}}\\ & \qquad \qquad \quad  \quad  \qquad  \quad  \ \ \ \ + \|\mathcal J_h\hat{v}_h\|_{L^{\infty}}\|\nabla \mathcal J_h\hat{w}_h\|_{L^2}+ \|w_{\mathcal{T}_h}\|_{L^{\infty}}\|\nabla \mathcal J_h\hat{v}_h\|_{L^2}\big),
		\end{split}
	\end{align}\vspace{-.5ex}
		\begin{align}\label{eq:quaderror2}
			\begin{split}
					&| ( v_{\mathcal{T}_h}^2, v_{\mathcal{T}_h}w_{\mathcal{T}_h}- \Pi_{\mathcal{T}_h}^0v_{\mathcal{T}_h} \Pi_{\mathcal{T}_h}^0w_{\mathcal{T}_h})_{L^2}|\\
				&\quad \lesssim h^2\big(\|v_{\mathcal{T}_h}\|_{L^{\infty}}\|\nabla \mathcal J_h\hat{v}_h\|_{L^2}+\|\mathcal J_h\hat{v}_h\|_{L^{\infty}}\|\nabla \mathcal J_h\hat{v}_h\|_{L^2} \big)\\
				&\qquad \quad \qquad \times \big(\|v_{\mathcal{T}_h}\|_{L^{\infty}}\|\nabla \mathcal J_h\hat{w}_h\|_{L^2} 
				+ \|w_{\mathcal{T}_h}\|_{L^{\infty}}\|\nabla \mathcal J_h\hat{v}_h\|_{L^2}\big),
			\end{split}
	\end{align}
	where hidden constants depend only on the domain, mesh regularity, $k$, and~$\sigma$.
\end{lemma}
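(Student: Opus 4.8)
The plan is to reduce both bounds to a single local estimate and a zero-mean (orthogonality) argument that supplies the factor $h^2$; everything else is bookkeeping with Hölder's inequality and the product rule.

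\emph{Key local bound.} By \cref{prop:prese_inter} we have $\mathcal I_h\circ\mathcal J_h\hat v_h=\hat v_h$, which by the definition of $\mathcal I_h$ means $v_{\mathcal T_h}=\Pi^{k+1}_{\mathcal T_h}\mathcal J_h\hat v_h$ on every $T\in\mathcal T_h$. Since $\Pi_T^0\Pi_T^{k+1}=\Pi_T^0$, this yields $\Pi_T^0 v_{\mathcal T_h}=\Pi_T^0\mathcal J_h\hat v_h$, and hence $v_{\mathcal T_h}-\Pi_T^0 v_{\mathcal T_h}=(\Pi_T^{k+1}-\Pi_T^0)\mathcal J_h\hat v_h$. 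Writing this as a difference of two projection errors of $\mathcal J_h\hat v_h$ and dominating it by the projection error onto constants, \cref{lem:poincare} gives the crucial estimate $\|v_{\mathcal T_h}-\Pi_T^0 v_{\mathcal T_h}\|_{L^2(T)}\lesssim h_T\|\nabla\mathcal J_h\hat v_h\|_{L^2(T)}$. A companion argument using an inverse inequality, the $L^2$-stability of $\Pi_T^{k+1}$, and again \cref{lem:poincare} gives $\|\nabla v_{\mathcal T_h}\|_{L^2(T)}\lesssim\|\nabla\mathcal J_h\hat v_h\|_{L^2(T)}$. These two local bounds, together with their analogues for $\hat w_h$, are the only place where the structure of $\hat U_h$ enters.

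\emph{Proof of \cref{eq:quaderror1}.} Setting $a:=\Pi_T^0 v_{\mathcal T_h}-v_{\mathcal T_h}$, I factor $(\Pi_T^0 v_{\mathcal T_h})^2-v_{\mathcal T_h}^2=a(2v_{\mathcal T_h}+a)$, so that the local integrand splits into $2a\,v_{\mathcal T_h}^2 w_{\mathcal T_h}$ and $a^2 v_{\mathcal T_h} w_{\mathcal T_h}$. The function $a$ has vanishing mean on $T$, so in the first piece I subtract the element mean of $v_{\mathcal T_h}^2 w_{\mathcal T_h}$; Cauchy--Schwarz, \cref{lem:poincare}, the product rule for $\nabla(v_{\mathcal T_h}^2 w_{\mathcal T_h})$, and the local bounds above then produce two powers of $h_T$ together with the factors $\|v_{\mathcal T_h}\|_{L^\infty}$, $\|w_{\mathcal T_h}\|_{L^\infty}$, $\|\nabla\mathcal J_h\hat v_h\|_{L^2(T)}$, and $\|\nabla\mathcal J_h\hat w_h\|_{L^2(T)}$. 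The quadratic piece $a^2 v_{\mathcal T_h} w_{\mathcal T_h}$ is controlled directly by Hölder's inequality and $\|a\|_{L^2(T)}^2\lesssim h_T^2\|\nabla\mathcal J_h\hat v_h\|_{L^2(T)}^2$. Summing over $T$ with $h_T\le h$ and Cauchy--Schwarz over the element index recovers the global right-hand side; the terms I generate are among those listed there, the $\|\mathcal J_h\hat\cdot\|_{L^\infty}$-terms being slack that this route does not require.

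\emph{Proof of \cref{eq:quaderror2}.} Here I use the telescoping decomposition $v_{\mathcal T_h}w_{\mathcal T_h}-\Pi_T^0 v_{\mathcal T_h}\,\Pi_T^0 w_{\mathcal T_h}=(v_{\mathcal T_h}-\Pi_T^0 v_{\mathcal T_h})w_{\mathcal T_h}+\Pi_T^0 v_{\mathcal T_h}\,(w_{\mathcal T_h}-\Pi_T^0 w_{\mathcal T_h})$. Each summand carries one zero-mean factor; pairing against $v_{\mathcal T_h}^2$, subtracting element means from the remaining smooth factor, and applying exactly the estimates of the previous paragraph (using $|\Pi_T^0 v_{\mathcal T_h}|\le\|v_{\mathcal T_h}\|_{L^\infty(T)}$ for the second summand) gives the stated bound after summation.

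\emph{Main obstacle.} The only genuinely structural step is the key local bound $\|v_{\mathcal T_h}-\Pi_T^0 v_{\mathcal T_h}\|_{L^2(T)}\lesssim h_T\|\nabla\mathcal J_h\hat v_h\|_{L^2(T)}$ together with $\|\nabla v_{\mathcal T_h}\|_{L^2(T)}\lesssim\|\nabla\mathcal J_h\hat v_h\|_{L^2(T)}$: the target right-hand sides are expressed through the gradient of the $H^1_0$-conforming smoother, whereas the quantity naturally arising from oscillation estimates is the piecewise gradient of the element unknown. The moment-preserving identity $v_{\mathcal T_h}=\Pi^{k+1}_{\mathcal T_h}\mathcal J_h\hat v_h$ from \cref{prop:prese_inter} is precisely what bridges the two. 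Once this is available, the difference-of-squares factorisation, the zero-mean orthogonality that supplies the second power of $h$, and the Hölder/product-rule bookkeeping are all routine.
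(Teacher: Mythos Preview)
Your proof is correct and close in spirit to the paper's, but you take a genuinely different route for the product-oscillation terms. Both you and the paper open with the same key local bound $\|v_{\mathcal T_h}-\Pi_T^0 v_{\mathcal T_h}\|_{L^2(T)}\lesssim h_T\|\nabla\mathcal J_h\hat v_h\|_{L^2(T)}$, obtained from the moment-preserving identity $v_{\mathcal T_h}=\Pi_T^{k+1}\mathcal J_h\hat v_h$. The difference lies in how the second factor of $h$ is extracted. The paper inserts $\Pi^0_{\mathcal T_h}(v_{\mathcal T_h}w_{\mathcal T_h})$ (resp.\ $\Pi^0_{\mathcal T_h}(v_{\mathcal T_h}^2)$) and bounds the resulting oscillation $\|v_{\mathcal T_h}w_{\mathcal T_h}-\Pi^0_{\mathcal T_h}(v_{\mathcal T_h}w_{\mathcal T_h})\|_{L^2}$ by passing through the smoothed product $\mathcal J_h\hat v_h\,\mathcal J_h\hat w_h$; this is precisely where the $\|\mathcal J_h\hat v_h\|_{L^\infty}$ and $\|\mathcal J_h\hat w_h\|_{L^\infty}$ terms in the statement originate. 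You instead factor $(\Pi^0 v)^2-v^2=a(2v+a)$, exploit the zero mean of $a$ to subtract element averages from the polynomial factor $v^2w$, and then apply \cref{lem:poincare} together with your inverse-inequality bound $\|\nabla v_{\mathcal T_h}\|_{L^2(T)}\lesssim\|\nabla\mathcal J_h\hat v_h\|_{L^2(T)}$; this bypasses the smoothed products entirely and explains why the $\|\mathcal J_h\hat\cdot\|_{L^\infty}$-terms come out as slack in your argument. Your route is somewhat more elementary and yields a slightly sharper bound; the paper's route has the advantage of packaging the product-oscillation estimate once and reusing it, at the cost of the extra $L^\infty$-norms of the smoother.
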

\begin{proof}
	We begin the proof by establishing three auxiliary estimates that will be used later. The first one, which holds for any \( \hat{v}_h = (v_{\mathcal{T}_h}, v_{\mathcal{F}_h}) \in \hat{U}_h \), reads
	\begin{align}\label{eq:appendix:auxi_1}
		\|v_{\mathcal{T}_h} - \Pi_{\mathcal{T}_h}^0 v_{\mathcal{T}_h}\|_{L^2} 
		&\leq \|v_{\mathcal{T}_h} - \mathcal{J}_h \hat{v}_h\|_{L^2} + \|\mathcal{J}_h \hat{v}_h - \Pi_{\mathcal{T}_h}^0 v_{\mathcal{T}_h}\|_{L^2} \lesssim h \|\nabla \mathcal{J}_h \hat{v}_h\|_{L^2}.
	\end{align}
This follows from \( \Pi_{\mathcal{T}_h}^{k+1} \mathcal{J}_h \hat{v}_h = v_{\mathcal{T}_h} \) and the approximation properties of the $L^2$-projection.
To derive the second estimate, we apply the triangle inequality along with the first estimate, which gives, for any \( \hat{v}_h, \hat{w}_h \in \hat{U}_h \), that
	\begin{equation}\label{eq:appendix:auxi_2}
		\begin{aligned}
			\|v_{\mathcal{T}_h}w_{\mathcal{T}_h}-\mathcal J_h\hat{v}_h\mathcal J_h\hat{w}_h\|_{L^2}&\leq \|v_{\mathcal{T}_h}w_{\mathcal{T}_h}-v_{\mathcal{T}_h} \mathcal J_h\hat{w}_h\|_{L^2} + \|v_{\mathcal{T}_h} \mathcal J_h\hat{w}_h-\mathcal J_h\hat{v}_h\mathcal J_h\hat{w}_h\|_{L^2}\\
			&\lesssim h\|v_{\mathcal{T}_h}\|_{L^{\infty}}\|\nabla\mathcal J_h\hat{w}_h\|_{L^2} + h\|\mathcal J_h\hat{w}_h\|_{L^{\infty}}\|\nabla\mathcal J_h\hat{v}_h\|_{L^2}.
		\end{aligned}
	\end{equation}
To prove the third estimate, we use the triangle inequality, the approximation properties of the \( L^2 \)-projection, and the product rule, which yields that
	\begin{equation}\label{eq:appendix:auxi_3}
		\begin{aligned}
			&\|v_{\mathcal{T}_h}w_{\mathcal{T}_h}- \Pi_{\mathcal{T}_h}^0(v_{\mathcal{T}_h}w_{\mathcal{T}_h})\|_{L^2}\\
			 &\quad \leq \|v_{\mathcal{T}_h}w_{\mathcal{T}_h}-\mathcal J_h\hat{v}_h\mathcal J_h\hat{w}_h\|_{L^2}  + \|\mathcal J_h\hat{v}_h\mathcal J_h\hat{w}_h -\Pi_{\mathcal{T}_h}^0(\mathcal J_h\hat{v}_h\mathcal J_h\hat{w}_h)\|_{L^2}\\ & \ \ \ \ \ \quad \quad   + \|\Pi_{\mathcal{T}_h}^0(v_{\mathcal{T}_h}w_{\mathcal{T}_h})-\Pi_{\mathcal{T}_h}^0(\mathcal J_h\hat{v}_h\mathcal J_h\hat{w}_h)\|_{L^2}\\
			&\quad \lesssim h\big(\|v_{\mathcal{T}_h}\|_{L^{\infty}}\|\nabla \mathcal J_h\hat{w}_h\|_{L^2} + \|\nabla \mathcal J_h\hat{v}_h\|_{L^{2}}\|\mathcal J_h\hat{w}_h\|_{L^{\infty}}
			%\\ & \ \ \ \ \ 
			+ \|\mathcal J_h\hat{v}_h\|_{L^{\infty}}\|\nabla \mathcal J_h\hat{w}_h\|_{L^2}\big).
		\end{aligned}
	\end{equation}
We now have the tools to prove \cref{eq:quaderror1}. Applying the triangle inequality yields
	\begin{multline*}
		|((\Pi_{\mathcal{T}_h}^0{v}_{\mathcal{T}_h})^2 -v_{\mathcal{T}_h}^2 , v_{\mathcal{T}_h}w_{\mathcal{T}_h})_{L^2}|  \leq \underbrace{|((\Pi_{\mathcal{T}_h}^0{v}_{\mathcal{T}_h})^2 -v_{\mathcal{T}_h}^2 , \Pi_{\mathcal{T}_h}^0(v_{\mathcal{T}_h}w_{\mathcal{T}_h}))_{L^2}|}_{\eqqcolon \Xi_1}\\+ 
		\underbrace{|((\Pi_{\mathcal{T}_h}^0{v}_{\mathcal{T}_h})^2 -v_{\mathcal{T}_h}^2 , v_{\mathcal{T}_h}w_{\mathcal{T}_h}-\Pi_{\mathcal{T}_h}^0(v_{\mathcal{T}_h}w_{\mathcal{T}_h}) )_{L^2}|}_{\eqqcolon \Xi_2},
%		\\		& = \Xi_1+\Xi_2
	\end{multline*}
	where the term $\Xi_1$ can be estimated as
	\begin{align}\label{eq:xi1}
		\Xi_1 & \leq \sum_{T \in \mathcal{T}_h}|\Pi_{T}^0(v_{T}w_{T})|\int_{T}(v_T-\Pi_{T}^0{v}_{T})^2\mathrm{d}x
%		&\leq C\|v_{\mathcal{T}_h} - \Pi_{\mathcal{T}_h}^0{v}_{\mathcal{T}_h}\|_{L^2}^2\|v_{\mathcal{T}_h}\|_{L^{\infty}}\|w_{\mathcal{T}_h}\|_{L^{\infty}}
		 \lesssim h^2\|v_{\mathcal{T}_h}\|_{L^{\infty}}\|w_{\mathcal{T}_h}\|_{L^{\infty}}\|\nabla \mathcal J_h\hat{v}_h\|_{L^2}^2.
	\end{align}
Here, the first inequality follows from elementary algebraic manipulations, using \( \Pi_T^0 v = |T|^{-1} \int_T v \, \mathrm{d}x \), while the second follows from \( \|\Pi_T^0 v\|_{L^\infty(T)} \leq \|v\|_{L^\infty(T)} \) and~\cref{eq:appendix:auxi_1}. The term $\Xi_2$ can be estimated as
	\begin{align*}
		\Xi_2 &\lesssim h\|v_{\mathcal{T}_h}\|_{L^{\infty}}\|\nabla \mathcal J_h\hat{v}_h\|_{L^2}\|v_{\mathcal{T}_h}w_{\mathcal{T}_h}- \Pi_{\mathcal{T}_h}^0(v_{\mathcal{T}_h}w_{\mathcal{T}_h})\|_{L^2},
%		&\leq Ch^2\|v_{\mathcal{T}_h}\|_{L^{\infty}}\|\nabla \mathcal J_h\hat{v}_h\|_{L^2} (\|v_{\mathcal{T}_h}\|_{L^{\infty}}\|\nabla \mathcal J_h\hat{w}_h\|_{L^2} + \|\nabla \mathcal J_h\hat{v}_h\|_{L^{2}}\|\mathcal J_h\hat{w}_h\|_{L^{\infty}}\\ & \ \ \ \ \ + \|\mathcal J_h\hat{v}_h\|_{L^{\infty}}\|\nabla \mathcal J_h\hat{w}_h\|_{L^2})
	\end{align*}
	where we again use the $L^{\infty}$-bound of $\Pi_T^0$ and \cref{eq:appendix:auxi_1}. Applying \cref{eq:appendix:auxi_3} to estimate the last term on the right-hand side yields the desired bound for \(\Xi_2\). Combining the previous estimates then establishes \cref{eq:quaderror1}.
	
Next, we prove estimate \cref{eq:quaderror2}. Applying the triangle inequality, we obtain that
\begin{multline*}
	| ( v_{\mathcal{T}_h}^2, v_{\mathcal{T}_h}w_{\mathcal{T}_h}- \Pi_{\mathcal{T}_h}^0v_{\mathcal{T}_h} \Pi_{\mathcal{T}_h}^0w_{\mathcal{T}_h})_{L^2}|  \leq 
	\underbrace{| ( \Pi_{\mathcal{T}_h}^0(v_{\mathcal{T}_h}^2), v_{\mathcal{T}_h}w_{\mathcal{T}_h}- \Pi_{\mathcal{T}_h}^0v_{\mathcal{T}_h} \Pi_{\mathcal{T}_h}^0w_{\mathcal{T}_h})_{L^2}|}_{\eqqcolon \xi_1}\\
	+ \underbrace{| ( v_{\mathcal{T}_h}^2 - \Pi_{\mathcal{T}_h}^0(v_{\mathcal{T}_h}^2), v_{\mathcal{T}_h}w_{\mathcal{T}_h}- \Pi_{\mathcal{T}_h}^0v_{\mathcal{T}_h} \Pi_{\mathcal{T}_h}^0w_{\mathcal{T}_h})_{L^2}|}_{\eqqcolon \xi_2}.
\end{multline*}
To estimate the term \(\Xi_1\), we apply algebraic manipulations similar to those in the proof of \cref{eq:xi1}, as well as the \(L^\infty\)-bound of \(\Pi_T^0\) and \cref{eq:appendix:auxi_1}, which gives
\begin{align*}
	\xi_1 &\leq 
%	\sum_{T \in \mathcal{T}_h}|\Pi_{T}^0(v_{T}^2)||\int_{T}v_{T}w_{T}- \Pi_{T}^0v_{T} \Pi_{T}^0w_{T}\mathrm{d}x|\\
%	& = 
	\sum_{T \in \mathcal{T}_h}|\Pi_{T}^0(v_{T}^2)||\int_{T}(v_{T}- \Pi_{T}^0v_{T})(w_{T}- \Pi_{T}^0w_{T}) \mathrm{d}x|\\
	&\lesssim h^2\|v_{\mathcal{T}_h}\|_{L^{\infty}}^2\|\nabla \mathcal J_h\hat{v}_h\|_{L^2}\|\nabla \mathcal J_h\hat{w}_h\|_{L^2}.
\end{align*}
The term \(\xi_2\) can be estimated by applying \cref{eq:appendix:auxi_3} with \(\hat{v}_h = \hat{w}_h\), proceeding similarly to the proof of \cref{eq:appendix:auxi_2}. Combining these estimates yields \cref{eq:quaderror2}.
\end{proof}

	\bibliographystyle{amsalpha}
	\bibliography{bib}
\end{document}